 \def\ps@pprintTitle{%
 	\let\@oddhead\@empty
 	\let\@evenhead\@empty
 	\def\@oddfoot{\footnotesize\itshape
 		{} \hfill\today}%
 	\let\@evenfoot\@oddfoot
 }
\DeclareMathOperator{\Ann}{Ann}
\newtheorem{theor}{Theorem}[section]
\newtheorem*{theor*}{Theorem}
\newtheorem{prop}[theor]{Proposition}
\newtheorem{lemma}[theor]{Lemma}
\newtheorem{cor}[theor]{Corollary}
\newtheorem*{cor*}{Corollary}
\theoremstyle{definition}               
\newtheorem{defin}[theor]{Definition}
\newtheorem{ex}{Example}
\newtheorem{rem}[theor]{Remark}
\newtheorem{rems}[theor]{Remarks}
\newtheorem*{que*}{Question}
\newtheorem*{conv*}{Convention}
 \newtheorem{prob}{Problem}
 \newtheorem*{prob*}{Problem}
\newtheorem*{nota*}{Notation}
\DeclareMathOperator{\End}{End}
\DeclareMathOperator{\id}{id}
\DeclareMathOperator{\E}{E}
\begin{document}

\begin{frontmatter}

\title{Associative pentagon algebras}	

\author[unile]{Marzia MAZZOTTA
}
\ead{marzia.mazzotta@unisalento.it}
\address[unile]{Dipartimento di Matematica e Fisica “Ennio De Giorgi”,  Università 
del Salento, Via Provinciale Lecce-Arnesano, 73100 Lecce, Italy}


\author[uniwa]{Agata PILITOWSKA
}
\ead{agata.pilitowska@pw.edu.pl} 
 \address[uniwa]{Faculty of Mathematics and Information Science, Warsaw University of Technology, Koszykowa 75, 00-662 Warsaw, Poland}


\begin{abstract}
A set-theoretic solution to the Pentagon Equation can be described as a \emph{pentagon} algebra $(S, \cdot, \ast)$ such that $(S, \cdot)$ is a semigroup and the operations $\cdot$ and $\ast$ are related by two additional equations. 
This paper aims to investigate \emph{associative} pentagon algebras in which $(S, \ast)$ is also a semigroup.
We introduce and describe two families of associative pentagon algebras which are strongly determined by the properties of the semigroup $(S,\ast)$. We present a complete characterization of such algebras using semigroup equations. We also provide constructions of such associative pentagon algebras and give several classes of examples. 

\end{abstract}

\begin{keyword}
 pentagon equation \sep set-theoretic solution \sep varieties of semigroups \sep equations
 \MSC[2020]  16T25\sep 81R50 \sep 18B40 \sep 08B20 \sep 03C05
\end{keyword}
\end{frontmatter}

 \section*{Introduction}

The Pentagon Equation (PE) classically originates from the field of Mathematical Physics as it first appeared as an identity satisfied by the Racah coefficients \cite{Bie53}. It was also studied as a consistency relation in quasi-Hopf algebras \cite{Dr89}, in the context of conformal field theory \cite{MoSe89}, and in connection with three-dimensional integrable systems \cite{Mai90}. Recently, a specific class of matrices that participate in factorization problems that turn out to be equivalent to constant and entwining pentagon maps has been proposed in \cite{Kassot23}.
In general, PE has several applications and appears in different areas of mathematics, also with different terminologies in the literature. See, for instance, the following non-exhaustive list: \cite{Za92, BaSk93, Wo96, BaSk98, Mi98, JiLi05, Kawa10, Ka11, DoSe14, Kor24}. 
For more contexts in which the PE appears, refer the reader to the introduction of the paper on \emph{polygon equations} \cite{DiMu15}  along with the references therein.   In this regard, we also mention the more recent paper \cite{MuHo24}. 

If $V$ is a vector space, a linear map $S: V \otimes V \to V \otimes V$ is a \emph{solution to the PE} if $S_{12}S_{13}S_{23}=S_{23}S_{12}$, where $S_{ij}$ denotes the map $V \otimes V \otimes V \to V \otimes V \otimes V$ acting as $S$ on the $(i,j)$-th tensor factors and as the identity on the remaining factor. If $\tau$ denotes the flip operator on $V \otimes V$, then $S$ is a solution if and only if $R=\tau S$ satisfies the so-called \emph{reversed Pentagon Equation}, that is, $R_{23}R_{13}R_{12}=R_{12}R_{23}$.

As it is common for the Yang-Baxter equation, the problem of studying this equation can be reduced to an easier one. In fact, in 1998, Kashaev and Sergeev \cite{KaSe98} introduced a definition of a set-theoretic solution of
this equation. Specifically, if $X$ is a set and $s:X\times X\to X\times X$ is a map, the pair $(X, s)$ is a \emph{set-theoretic solution of the PE}, or briefly a \emph{solution}, if $s$ satisfies the relation
    $s_{23}s_{13}s_{12}=s_{12}s_{23}$,
	where $s_{ij}$ is the map $ X \times X \times X \to X \times X \times X$ acting as $s$ on the $(i, j)$-component and as $\id_X$ on the remaining one.
        Clearly, set-theoretic solutions induce solutions of the vector equation. 
        
        Writing the map $s$ as $s(x,y)=(x \cdot y, x \ast y)$, where $\cdot$ and $\ast$ are two binary operations on the set $X$, then one can check that $(X, s)$ is a solution if and only if for all $x,y,z \in X$ \begin{align*}
 & \, x\cdot(y\cdot z)=(x\cdot y)\cdot z,\notag\\
           &\left( x \ast y \right) \cdot \left(\left(x \cdot y\right) \ast z\right)  = x\ast \left(y \cdot z\right), \tag{I} \\
          & \left( x \ast y \right) \ast \left(\left(x \cdot y\right) \ast z\right)=  y \ast z. \tag{II}
        \end{align*}
We call such a triple $(X, \cdot, \ast)$ a \emph{pentagon algebra}.  In the pioneering paper \cite{KaSe98}, the focus was mainly on bijective solutions (that is, the map $s$ is bijective). In this vein, in \cite{KaRe07} invertible mappings arising from special Lie groups are also investigated. Furthermore, the paper \cite{CoJeKu20} contains a classification of involutive solutions, namely bijective solutions, in which $s^{-1}=s$. This result has recently been extended to the finite bijective case \cite{CoOkVAn24x}. 
The authors in \cite{CaMaMi19} studied not necessarily bijective solutions and achieved a classification of pentagon algebras when $(X, \cdot)$ is a group. In addition, in recent years, different classes of solutions have been studied and some classifications have been obtained; see \cite{CaMaSt20, MaPeSt24, Maz24, Castelli24x, EvKassTong24}. In this regard, \cite{Maz25} collects in a systematic way the results obtained so far on the solutions.

In this paper, we study \emph{associative pentagon algebras}, briefly \emph{APAs}, which are pentagon algebras such that $ \ast$ is also an associative operation. The problem of studying these algebras naturally arises by looking at the basic examples of solutions. It is enough to consider an arbitrary semigroup $(X, \cdot)$ and a right-zero semigroup $(X, \ast)$ to obtain a simple example of an APA. This is 
the only bijective solution if $(X, \cdot)$ is a group \cite{KaRe07}.

In any APA $(X,\cdot,\ast)$, for each $x\in X$, we can associate a left translation $\theta_x: X \to X$ by $x$ with respect to the operation $\ast$. 
      It directly follows from the associativity condition that the set
      $T(X):=\{\theta_x \, \mid \, x \in X\}$
     is a subsemigroup of the transformation monoid $X^X$, which we call the \emph{transformation semigroup} associated with $(X, \cdot, \ast)$.  
We establish that some properties of the APA can be reflected in its semigroup $(T(X), \circ)$ and vice versa.

We focus on two classes of APA that we call, respectively, \emph{left-trivially distributive} (LTD) and \emph{right-trivially distributive} (RTD). Specifically, an algebra $(X, \cdot, \ast)$ is 
\begin{center}
    \text{LTD \, if $\forall x,y,z \in X \, \,(x \cdot y) \ast z= x \ast z$;
    \qquad RTD \, if $\forall x,y,z \in X \,\,(x \cdot y) \ast z= y \ast z$.}
\end{center}
The intersection between the two classes is given by the APAs $(X, \cdot, \ast)$ in which $(X, \ast)$ is determined by an idempotent endomorphism $\gamma$ of $(X, \cdot)$, 
i.e., $x \ast y=\gamma(y)$, for all $x, y \in X$.

To study LTD or RTD associative pentagon algebras, we consider two families of semigroups. The first one is the variety $\mathcal{V}_{P_k}$ of right-normal $P_k$-semigroups. 
Recall that $(X, \ast)$ is a right-normal if, for all $x, y, z\in X$, $x \ast y \ast z=y\ast x\ast z$. In addition, for $0<k\in \mathbb{N}$, we say that $(X, \ast)$ is a \emph{$P_k$-semigroup} if, for all $x, y,z \in X$, \begin{align*}
x\ast y^k\ast z=x\ast z \tag{$P_k$}.
\end{align*}
In particular, $P_1$-semigroups are specifically Kimura semigroups (see \cite{Kimura, Agore}). 


The second one is the variety $\mathcal{V}_{Q_k}$ which contains \emph{$Q_k$-semigroups} $(S,\ast)$ satisfying, 
for all $x, y, z \in X$,
  \begin{align*}
  x \ast y^k \ast z= y \ast z.\tag{$Q_k$}
   \end{align*}

 The main results of the paper show that an APA $(X, \cdot, \ast)$ is LTD if and only if $(X, \ast) \in \mathcal{V}_{P_2}$ (Theorem \ref{thm:LTDAPA}) and it is RTD if and only if $(X, \ast) \in \mathcal{V}_{Q_1}$ (Theorem \ref{lemma_q1}).


Surprisingly, it turns out that some of the solutions investigated in \cite{CaMaSt20}, which are mappings satisfying both the PE and the quantum Yang-Baxter equation \cite{Dr92},  belong to the family of RTD APAs.

The paper is organized as follows. In Section 1, we give some examples and basic properties of associative pentagon algebras. In Section 2, we study the variety $\mathcal{V}_{P_k}$. We present a complete description of free $\mathcal{V}_{P_k}$-semigroups (Theorem \ref{thm:V2free}) and show that the variety $\mathcal{V}_{P_k}$ is locally finite, for any $k\in \mathbb{N}$. Section 3 is devoted to LTD APAs. We construct LTD APAs from the free $\mathcal{V}_{P_2}$-semigroups and show that for LTD APA $(X,\cdot,\ast)$, the transformation semigroup $(T(X), \circ)$ is commutative and $T(X)\subseteq \End(X, \cdot)$ (\cref{ts_comm}).    Moreover, we determine several examples of such APAs, including those for which $(T(X), \circ)$ is a monoid. The latter class is highlighted by the particular subvariety $\mathcal{V}_R$ of $\mathcal{V}_{P_2}$. In Section 4 we present several examples of APAs that are LTD. In particular, we show that all bijective APAs $(S,\cdot,\ast)$ are LTD and such that $(S,\ast)\in \mathcal{V}_R$. Similarly as for LTD algebras, in Section 5, we investigate RTD associative pentagon algebras and the variety $\mathcal{V}_{Q_1}$, with the relative constructions. Furthermore, we obtain that the transformation semigroup $(T(X), \circ)$ of RTD APAs is a right-zero semigroup (Proposition \ref{prop:lzs}). Even for these algebras, in Section 6 we provide various examples. Finally, in the last Section 7 we present an example of APA which is neither LTD nor RTD. We also give further perspectives on the future that can open research for other classes of APAs.

\smallskip

\section{Basics on associative pentagon algebras}
In this section,  we introduce the notion of associative pentagon algebras and provide some basic properties and examples that will be used throughout the paper.

\medskip

\begin{defin}
 A pentagon algebra $(S, \cdot, \ast)$ is called \emph{associative} if $(S, \ast)$ is a semigroup. 
\end{defin}
In the sequel, we shall briefly refer to any associative pentagon algebra as APA.  Moreover, we denote by $\E(S, \cdot )$ and $\E(S, \ast )$ the sets of idempotents of the semigroups $(S, \cdot)$ and $(S, \ast)$, respectively. 

\smallskip

The following are some simple examples of APA.

\begin{ex} 
   \cite{Mi04, CaMaSt20} Let $S$ be a set and consider two commuting maps $f, g:S \to S$ such that $f^2=f$ and $g^2=g$. 
   For $x, y \in S$, set $x \cdot y=f(x)$ and $x \ast y=g(y)$. Then $(S, \cdot, \ast)$ is an APA with $\E(S, \cdot)=\{x \in S \, \mid \, f(x)=x\}$ and $\E(S, \ast)=\{x \in S \, \mid \, g(x)=x\}$.
\end{ex}


\begin{ex}\label{ex_left}
    Let $S=\{a,b,c\}$ and $(S, \cdot)$ be the aperiodic semigroup defined as follows: $a \cdot x=a=c \cdot x$ and $b \cdot x=b$, for all $x \in S$. Moreover, consider the operations $\ast$ and $\ast'$ defined by:
$$\begin{tabular}{c | c c c c c}
    $\ast$ & $a$ & $b$ & $c$   \\
    \hline
   $a$& $a$ & $b$ & $a$ \\
    $b$ & $b$ & $a$ & $b$\\
    $c$& $a$ & $b$ & $a$
\end{tabular}  \qquad {\rm and}\qquad \begin{tabular}{c | c c c c c}
    $\ast'$ & $a$ & $b$ & $c$   \\
    \hline
   $a$& $b$ & $a$ & $b$ \\
    $b$ & $a$ & $b$ & $a$\\
    $c$& $b$ & $a$ & $b$
\end{tabular}.$$
Then $(S, \cdot, \ast)$ and $(S, \cdot, \ast')$ are APA.    

\end{ex}

\smallskip

 When it is convenient, we will use the following notation. 
\begin{nota*} Let $S$ be a non-empty set and for each $x \in S$, let $\theta_x\in S^S$.
 Set $x \ast y:= \theta_x(y)$, for all $x, y \in S$. If for each $x\in S$, $\theta_x=\gamma$, for some $\gamma\in S^S$,  we will say that $(S,\ast)$ is \emph{determined} by the mapping $\gamma$. Note that if $\gamma=\id_S$, then $(S,\ast)$ is a right-zero semigroup and if $\gamma=const\in S$, then $(S,\ast)$ is a zero-semigroup.
 
Then $(S, \cdot, \ast)$ is an APA if and only if $(S, \cdot)$ is a semigroup and the following hold
\begin{align}
    \theta_x(y) \cdot \theta_{x \cdot y}(z)&=\theta_x(y \cdot z) \label{p_one}\tag{I}\\
    \theta_{\theta_x(y)}\theta_{x \cdot y}&=\theta_y \label{p_two}\tag{II}\\
    \label{assoc} \theta_{\theta_x(y)}&=\theta_x\theta_y \tag{$\ast$}
\end{align}
for all $x,y, z\in S$. In particular, equation \eqref{assoc} is equivalent to requiring that $(S, \ast)$ be a semigroup. 
\end{nota*}

\smallskip

In the following, we denote by $\End{(S,\cdot)}$ the subsemigroup of the transformation monoid $S^S$ of $S$ of all endomorphisms of $(S,\cdot)$.

\begin{ex}\label{ex:gamma}
Let $(S,\cdot)$ be a semigroup, $\gamma\in S^S$ and $(S,\ast)$ be determined by $\gamma$. An algebra $(S,\cdot,\ast)$ is an APA if and only if $\gamma\in \End(S,\cdot)$ and $\gamma^2=\gamma$. 
In particular, it means that if $\gamma=const\in S$, then $const\in \E(S,\cdot)$.
\end{ex}

\begin{rem}
Clearly, not all pentagon algebras are associative. For example, take a group $G$ and define the operations $x \cdot y:=x$ and $x\ast y:=x^{-1}y$, for all $x,y \in G$. Then $(G, \cdot, \ast)$ is a pentagon algebra such that $\ast$ is not associative.
\end{rem}

\smallskip



Directly by \eqref{p_two} and \eqref{assoc}, we obtain a useful observation.
\begin{rem}
    Let $(S, \cdot, \ast)$ be an APA. Then for all $x, y\in S$:
      \begin{align}
        \theta_x=\theta_y\theta_x\theta_{y \cdot x}.\label{rel}
    \end{align} 
\end{rem}

\smallskip

Applying
\eqref{p_one} and \eqref{p_two}, the following immediately shows which semigroups are APA, as both operations can be considered equal.
\begin{lemma}\label{cdot=}
Let $(S,\cdot)$ be a semigroup. Then $(S,\cdot, \cdot )$ is an APA if and only if, for all $x,y,z \in S$, $x\cdot y\cdot z=y\cdot z$.
\end{lemma}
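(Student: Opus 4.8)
The plan is to substitute $\ast=\cdot$ directly into the defining identities of an APA and read off what survives. Since $(S,\cdot)$ is assumed to be a semigroup, the operation $\ast=\cdot$ is automatically associative, so condition \eqref{assoc} holds trivially (equivalently, $(S,\ast)=(S,\cdot)$ is a semigroup), and therefore $(S,\cdot,\cdot)$ is an APA precisely when the two pentagon conditions \eqref{p_one} and \eqref{p_two} hold. Writing $\theta_x$ for the left translation by $x$ with respect to $\cdot$, condition \eqref{p_one} reads $(x\cdot y)\cdot((x\cdot y)\cdot z)=x\cdot(y\cdot z)$ and condition \eqref{p_two} reads $(x\cdot y)\cdot((x\cdot y)\cdot z)=y\cdot z$; using associativity of $\cdot$ throughout, these become
\[
x\cdot y\cdot x\cdot y\cdot z=x\cdot y\cdot z\qquad\text{and}\qquad x\cdot y\cdot x\cdot y\cdot z=y\cdot z,
\]
respectively, for all $x,y,z\in S$. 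So the task reduces to showing that the conjunction of these two word identities is equivalent to $x\cdot y\cdot z=y\cdot z$.

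For the forward implication I would simply chain the two displayed equalities: if both hold for all $x,y,z\in S$, then $x\cdot y\cdot z=x\cdot y\cdot x\cdot y\cdot z=y\cdot z$, which is exactly the asserted identity. For the converse, assume $x\cdot y\cdot z=y\cdot z$ for all $x,y,z\in S$. Viewing the length‑five product $x\cdot y\cdot x\cdot y\cdot z$ as the unparenthesised product of the three elements $x\cdot y$, $x$, $y\cdot z$ (legitimate by associativity) and applying the hypothesis to that triple gives $x\cdot y\cdot x\cdot y\cdot z=x\cdot(y\cdot z)=x\cdot y\cdot z$; applying the hypothesis once more yields $x\cdot y\cdot z=y\cdot z$. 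Hence both displayed identities hold, \eqref{p_one} and \eqref{p_two} are satisfied, and $(S,\cdot,\cdot)$ is an APA.

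I do not anticipate a genuine obstacle here: the only thing that needs a moment's care is the bookkeeping in the converse, namely re‑bracketing the length‑five product as a product of three blocks before invoking the hypothesis; everything else is a direct substitution into \eqref{p_one}, \eqref{p_two}, \eqref{assoc}.
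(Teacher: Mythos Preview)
Your argument is correct and is exactly the direct substitution into \eqref{p_one} and \eqref{p_two} that the paper has in mind; the paper itself does not spell out the details beyond ``applying \eqref{p_one} and \eqref{p_two}'', so your write-up is in fact more explicit than the original.
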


 \begin{ex} For any right-zero semigroup $(S, \cdot)$, the triple $(S,\cdot,\cdot)$ is an APA.
 \end{ex}

\begin{ex}\label{ex:freetwo}
Let $(S,\cdot,\cdot)$ be the APA such that $(S,\cdot)$ is a free semigroup satisfying the condition in \cref{cdot=} and generated by a set $X$.
Thus, we get $S=\{x, \,x\cdot y\mid x,y\in X\}$. 
\end{ex}

    

\smallskip

The following technical lemma will be crucial in proving some of the main results of the paper.

\begin{lemma}
Let $(S, \cdot, \ast)$ be an APA. Then, for all $x, y, z \in S$, 
\begin{align}\label{rel_4}
    \theta_{x \cdot y}\theta_z=\theta_y\theta_z\theta_x\theta_{y \cdot z}.
\end{align}
 \begin{proof}
     In fact, for $x, y, z\in S$, we have
    \begin{align*}
      \theta_{x \cdot y} \theta_z&\underset{\eqref{assoc}}{=}\theta_{\theta_{x \cdot y}(z)}\underset{\eqref{p_two}}{=}\theta_{\theta_{\theta_x(y)}\theta_{x \cdot y}(z)}\theta_{\theta_x(y) \cdot \theta_{x \cdot y}(z)}
      \\&
      \underset{\eqref{p_one}}{=}\theta_{\theta_{\theta_x(y)}\theta_{x \cdot y}(z)}\theta_{\theta_x(y \cdot z)}\underset{\eqref{p_two}}{=}\theta_{\theta_y(z)}\theta_{\theta_x(y \cdot z)}\underset{\eqref{assoc}}{=}\theta_y\theta_z\theta_x\theta_{y \cdot z},
    \end{align*}  
    which is the desired conclusion.
 \end{proof} 
\end{lemma}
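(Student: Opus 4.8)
The plan is to prove \eqref{rel_4} by a purely formal manipulation of the three defining equations of an APA — \eqref{p_one}, \eqref{p_two}, \eqref{assoc} — staying at the level of the translation maps $\theta_x$ rather than evaluating on elements. The first move is to collapse the composite on the left-hand side: by \eqref{assoc}, applied with $x\cdot y$ in place of $x$ and $z$ in place of $y$, one has $\theta_{x\cdot y}\,\theta_z=\theta_{\theta_{x\cdot y}(z)}$, so the task reduces to re-expanding the single translation by $w:=(x\cdot y)\ast z=\theta_{x\cdot y}(z)$ as the required four-fold product.

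The decisive step is to re-expand $\theta_w$ by using \eqref{p_two} in the reverse direction. Since \eqref{p_two} asserts $\theta_{\theta_a(b)}\theta_{a\cdot b}=\theta_b$ for \emph{all} $a,b\in S$, the element $a$ is a free parameter, and I would take $a:=x\ast y=\theta_x(y)$ and $b:=w$, obtaining
\[
\theta_{\theta_{x\cdot y}(z)}=\theta_{\theta_{\theta_x(y)}(\theta_{x\cdot y}(z))}\,\theta_{\theta_x(y)\cdot\theta_{x\cdot y}(z)}.
\]
Both subscripts then simplify by the remaining equations: the product $\theta_x(y)\cdot\theta_{x\cdot y}(z)$ is literally the left-hand side of \eqref{p_one}, hence equals $\theta_x(y\cdot z)$; and $\theta_{\theta_x(y)}(\theta_{x\cdot y}(z))=(\theta_{\theta_x(y)}\theta_{x\cdot y})(z)$, which by \eqref{p_two} equals $\theta_y(z)$. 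After substitution the expression reads $\theta_{\theta_y(z)}\,\theta_{\theta_x(y\cdot z)}$, and a final application of \eqref{assoc} to each factor, namely $\theta_{\theta_y(z)}=\theta_y\theta_z$ and $\theta_{\theta_x(y\cdot z)}=\theta_x\theta_{y\cdot z}$, produces $\theta_y\theta_z\theta_x\theta_{y\cdot z}$, which is precisely \eqref{rel_4}.

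The one genuine obstacle is to spot that $a=x\ast y$ is the correct auxiliary element in the backward use of \eqref{p_two}: it is exactly the choice that makes both resulting subscripts collapse at once — one via \eqref{p_one}, the other via \eqref{p_two} — after which everything is bookkeeping. I would also take care that \eqref{p_two} and \eqref{assoc} are being invoked as genuine identities \emph{between maps} (which they are, per the Notation block), so that they may be applied both with substituted parameters and with the remaining parameter left free, and I would check that no use of associativity of $(S,\cdot)$ beyond the ambient semigroup axiom has crept in. As a consistency check, the resulting identity should be compatible with the earlier observation \eqref{rel}.
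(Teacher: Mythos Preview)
Your proof is correct and follows essentially the same route as the paper's: collapse $\theta_{x\cdot y}\theta_z$ to $\theta_{\theta_{x\cdot y}(z)}$ via \eqref{assoc}, re-expand using \eqref{p_two} with the auxiliary element $a=\theta_x(y)$, simplify the two subscripts by \eqref{p_one} and \eqref{p_two} respectively, and finish with \eqref{assoc}. The only difference is presentational --- you spell out explicitly why $a=x\ast y$ is the right choice, whereas the paper simply writes down the chain of equalities.
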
  

\smallskip

We observe that in any APA some additional properties on $(S,  \ast)$ also constrain the structure of the semigroup $(S, \cdot)$. For example,  the following result provides the classification of APAs in the case where $(S, \ast)$ is a group. 
\begin{prop}\cite[Proposition 17]{CaMaMi19} \label{s_ast group}
    Let $(S, \cdot, \ast)$ be a pentagon algebra such that $(S, \ast)$ is a group. Then $(S, \cdot)$ is a left-zero semigroup and the group $(S, \ast)$ is elementary $2$-abelian.
\end{prop}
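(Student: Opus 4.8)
The plan is to exploit the group axioms on $(S,\ast)$ — cancellativity, the identity $e$, and the existence of inverses — against the two pentagon identities (I) and (II), specialising their free variables at $e$ at the right moments.

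First I would put (II) into a reduced form. Since $\ast$ is associative, the left-hand side of (II) equals $\bigl((x\ast y)\ast(x\cdot y)\bigr)\ast z$, so (II) reads $\bigl((x\ast y)\ast(x\cdot y)\bigr)\ast z=y\ast z$ for all $x,y,z\in S$, and right-cancelling $z$ leaves
\begin{align*}
(x\ast y)\ast(x\cdot y)=y\qquad\text{for all }x,y\in S. \tag{$\star$}
\end{align*}
(In the $\theta$-notation this is just \eqref{p_two} combined with \eqref{assoc} and the injectivity of the left regular representation of $(S,\ast)$.) Setting $x=e$ in $(\star)$ and cancelling $y$ on the left gives $e\cdot y=e$ for all $y$; setting $y=e$ in $(\star)$ and using $x\ast e=x$ gives $x\cdot e=x^{-1}$, the $\ast$-inverse of $x$.

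Next I would substitute $y=e$ into (I): using $e\cdot z=e$, $x\ast e=x$ and $x\cdot e=x^{-1}$, identity (I) collapses to $x\cdot(x^{-1}\ast z)=x$ for all $x,z\in S$. Since $z\mapsto x^{-1}\ast z$ is a bijection of $S$, the element $x^{-1}\ast z$ ranges over all of $S$, so $x\cdot w=x$ for every $w\in S$; that is, $(S,\cdot)$ is a left-zero semigroup.

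Finally, feeding $x\cdot y=x$ back into $(\star)$ gives $(x\ast y)\ast x=y$ for all $x,y\in S$, and $y=e$ forces $x\ast x=e$: every element of $(S,\ast)$ is an involution. A group of exponent $2$ is abelian — expand $(a\ast b)\ast(a\ast b)=e=(a\ast a)\ast(b\ast b)$ and cancel — hence $(S,\ast)$ is an elementary abelian $2$-group, which completes the proof. I do not anticipate a real obstacle: the only step needing care is the passage from $x\cdot(x^{-1}\ast z)=x$ to left-zeroness, where the surjectivity of the group translations is essential; everything else is cancellation plus evaluation at the identity.
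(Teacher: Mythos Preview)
Your argument is correct in every step: the reduction of (II) to $(\star)$ via associativity and right-cancellation, the specialisations $x=e$ and $y=e$ giving $e\cdot y=e$ and $x\cdot e=x^{-1}$, the use of (I) with $y=e$ together with surjectivity of left $\ast$-translations to force left-zeroness, and the final exponent-$2$ conclusion are all sound.

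As for comparison: the paper does not supply its own proof of this proposition but merely quotes it from \cite[Proposition~17]{CaMaMi19}, so there is nothing in the present paper to compare your argument against. Your self-contained derivation from the raw identities (I) and (II) is therefore a genuine addition rather than a rephrasing.
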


\smallskip

The following shows that the only APA in which $(S, \ast)$ is a \emph{band}, i.e., an idempotent semigroup, is the one determined by $\gamma=\id_S$.

\begin{prop}\label{S_cdot_ast_idemp}
Let $(S,\cdot)$ be a semigroup and
$(S, \ast)$ be a band. Then $(S,\cdot,\ast)$ is an APA if and only if $(S, \ast)$ is determined by $\id_S$.
\begin{proof}
    First, by \eqref{assoc}, the maps $\theta_x$ are all idempotent. Further, for any $x,y\in X$, we get
    \begin{align*}
 \theta_x(y)=\theta_x\theta_y(y)\underset{\eqref{rel}}{=}\theta_x\theta_{x}\theta_y\theta_{x \cdot y}(y)=\theta_{x}\theta_y\theta_{x \cdot y}(y)\underset{\eqref{rel}}{=}\theta_y(y)=y,   
    \end{align*}
   which is the desired conclusion.    
   By Example \ref{ex:gamma}, the converse is obvious.
\end{proof}
\end{prop}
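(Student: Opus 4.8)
The plan is to reformulate both hypothesis and conclusion in terms of the left translations $\theta_x$ defined by $x\ast y=\theta_x(y)$, and then to extract the conclusion from the relation \eqref{rel} together with idempotency. Saying that $(S,\ast)$ is a band means exactly $x\ast x=x$, that is, $\theta_x(x)=x$ for every $x\in S$. Feeding this into \eqref{assoc} gives $\theta_x\theta_x=\theta_{\theta_x(x)}=\theta_x$, so each $\theta_x$ is an idempotent element of $S^S$. These are the only two inputs I expect to need; the conclusion that $(S,\ast)$ is determined by $\id_S$ is the assertion $\theta_x(y)=y$ for all $x,y\in S$.

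For the main step, I would fix $x,y\in S$ and start from $\theta_x(y)=\theta_x\theta_y(y)$, which is legitimate because $\theta_y(y)=y$. Applying \eqref{rel} with the pair $(x,y)$, i.e. $\theta_y=\theta_x\theta_y\theta_{x\cdot y}$, to substitute for the inner $\theta_y$ yields $\theta_x\theta_y=\theta_x\theta_x\theta_y\theta_{x\cdot y}$. Idempotency of $\theta_x$ collapses $\theta_x\theta_x$ to $\theta_x$, leaving $\theta_x\theta_y\theta_{x\cdot y}$; a second use of the same instance of \eqref{rel}, now read from right to left, identifies $\theta_x\theta_y\theta_{x\cdot y}=\theta_y$. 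Evaluating at $y$ and using $\theta_y(y)=y$ once more gives $\theta_x(y)=y$, which is precisely the desired identity, so $(S,\ast)$ is determined by $\id_S$.

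The converse requires nothing new: $\id_S$ is an idempotent endomorphism of any semigroup $(S,\cdot)$, so by Example \ref{ex:gamma} the triple $(S,\cdot,\ast)$ with $x\ast y=y$ is an APA, and of course $(S,\ast)$ is then a (right-zero, hence idempotent) band. The only point demanding care is the bookkeeping in the two applications of \eqref{rel}: one must ensure that both invocations use the same pair $(x,y)$, so that the middle factor $\theta_y$ produced by the first substitution is exactly the one eliminated by the second. Beyond matching up these indices, I foresee no real obstacle.
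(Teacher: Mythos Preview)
Your proposal is correct and follows essentially the same argument as the paper: use $\theta_y(y)=y$ to write $\theta_x(y)=\theta_x\theta_y(y)$, apply the instance $\theta_y=\theta_x\theta_y\theta_{x\cdot y}$ of \eqref{rel}, cancel $\theta_x^2=\theta_x$, and apply \eqref{rel} once more to recover $\theta_y(y)=y$. The converse via Example~\ref{ex:gamma} is also handled identically.
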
 
\smallskip

As you will see in the next sections, to obtain classification of some APAs, it is convenient to start with the idempotents of $(S, \cdot)$. In fact, the above examples show that it is not possible to establish, in general, a precise link between the sets of idempotents of the two semigroups. 

\begin{ex}\hspace{1mm}
    \begin{enumerate}
    \item Let $(S,\cdot,\ast)$ be an APA from Example \ref{ex:gamma}. If $\gamma=\id_S$ then $\E(S,\ast)=S$ and $\E(S,\cdot)$ may be from the empty set to the hole $S$. If $\gamma=e\in \E(S,\cdot)$ then $\E(S,\ast)=\{e\}$ and the set $\E(S,\cdot)$ consists of at least one element.
    \vspace{2mm}
        \item For the APA $(S,\cdot,\ast)$ in \cref{cdot=}, there is 
        $\E(S, \cdot)=\E(S, \ast)$.
        In particular, for  the APA $(\{x, \,x\cdot y\mid x,y\in X\},\cdot,\cdot)$  from Example \ref{ex:freetwo}, $\E(S,\cdot)=\{x\cdot y\, \mid \, x,y\in X\}$.
\vspace{2mm}
\item For the APA $(S,\cdot,\ast)$ in \cref{s_ast group}, 
$\E\left(S, \cdot\right)=S$ and $\ \E\left(S, \ast\right)=\{1\}$.
  \end{enumerate}
  \end{ex}

\smallskip

\noindent In general, we can state the following.
\begin{prop}
 Let $(S, \, \cdot , \, \ast)$ be an APA. Then for all $x \in S$ and $e \in \E(S, \, \cdot)$ we have:
          $(x \cdot e) \ast e \in \E(S, \cdot)$.
 In particular, $\E(S, \, \cdot)  \cap \E(S, \, \ast)=\{e \ast e \, \mid \, e \in \E(S, \, \cdot)\}.$
 \begin{proof}
For $x \in S$ and $e \in \E(S, \, \cdot)$, by \eqref{p_one}, we have
        \begin{align*}
            \left((x \cdot e) \ast e \right) \cdot \left((x \cdot e) \ast e \right)= \left((x \cdot e) \ast e \right) \cdot \left((\left(x \cdot e) \cdot e\right) \ast e \right)=  (x\cdot e) \ast e.
        \end{align*}
Taking $x=e$, we get the claim.
    \end{proof}
\end{prop}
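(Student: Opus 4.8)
The plan is to prove the $\cdot$-idempotency of $(x\cdot e)\ast e$ by a single well-chosen substitution into \eqref{p_one}, and then to obtain the description of $\E(S,\cdot)\cap\E(S,\ast)$ as a short corollary, using \eqref{p_two} to handle $\ast$-idempotency.

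First I would compute $\bigl((x\cdot e)\ast e\bigr)\cdot\bigl((x\cdot e)\ast e\bigr)$. Writing \eqref{p_one} as $(a\ast b)\cdot\bigl((a\cdot b)\ast c\bigr)=a\ast(b\cdot c)$, apply it with $a=x\cdot e$ and $b=c=e$. Since $\cdot$ is associative and $e\in\E(S,\cdot)$, we have $(x\cdot e)\cdot e=x\cdot(e\cdot e)=x\cdot e$ and $e\cdot e=e$; hence the left-hand side becomes $\bigl((x\cdot e)\ast e\bigr)\cdot\bigl((x\cdot e)\ast e\bigr)$ and the right-hand side becomes $(x\cdot e)\ast e$. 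This is precisely $(x\cdot e)\ast e\in\E(S,\cdot)$. The only point requiring care is lining up the three arguments of \eqref{p_one} so that both factors on the left collapse to the single element $(x\cdot e)\ast e$; once that is seen, the identity $e\cdot e=e$ does the rest.

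For the displayed equality I would argue by mutual inclusion. For $\supseteq$, fix $e\in\E(S,\cdot)$; taking $x=e$ in the first part gives $e\ast e=(e\cdot e)\ast e\in\E(S,\cdot)$. To see $e\ast e\in\E(S,\ast)$ as well, use \eqref{p_two} in the form $(x\ast y)\ast\bigl((x\cdot y)\ast z\bigr)=y\ast z$ with $x=y=z=e$: since $e\cdot e=e$, this reads $(e\ast e)\ast(e\ast e)=e\ast e$. Thus $e\ast e\in\E(S,\cdot)\cap\E(S,\ast)$. For $\subseteq$, if $f\in\E(S,\cdot)\cap\E(S,\ast)$, then $f\in\E(S,\cdot)$ and $f\ast f=f$, so $f=f\ast f$ belongs to $\{e\ast e\mid e\in\E(S,\cdot)\}$.

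I do not expect a genuine obstacle here: each part is a one-line substitution into \eqref{p_one} or \eqref{p_two}, the only bookkeeping being the use of $e\cdot e=e$ to make the arguments match; in particular \eqref{assoc} is not needed beyond the standing assumption that $\ast$ is associative.
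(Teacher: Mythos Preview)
Your proof is correct and follows essentially the same approach as the paper: the paper applies \eqref{p_one} with $a=x\cdot e$, $b=c=e$ for the first part, then writes only ``Taking $x=e$, we get the claim'' for the second. Your argument is in fact more complete, since you explicitly invoke \eqref{p_two} to verify $e\ast e\in\E(S,\ast)$ and spell out the reverse inclusion, both of which the paper leaves to the reader.
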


\smallskip

We now give some properties of the maps $\theta_x$ that involve the idempotents of $\E(S, \cdot)$ and will be helpful in classifying some APAs.
\begin{lemma}\label{lm:cubic}
   Let $(S, \cdot, \ast)$ be an APA. 
   Then for all $e \in \E(S, \cdot)$ and $x \in S$: 
   \begin{enumerate}
       \item $\theta_e\theta_e\theta_e=\theta_e$, 
       \item $\theta_e\theta_{e \cdot x}=\theta_{e\cdot x} \theta_{e \cdot x}$, 
       \item $\theta_{e \cdot x}\theta_e=\theta_{e}\theta_{e}$,
       \item $\theta_e=\theta_{e\cdot x\cdot e}$.
   \end{enumerate}   
   \begin{proof}
 The claim in $1.$ immediately follows by \eqref{rel}.
  Now, let $x\in S$ and $e \in \E(S, \cdot)$. By \eqref{rel} we have $\theta_{e \cdot x}=\theta_e\theta_{e \cdot x}\theta_{e \cdot e \cdot x}=\theta_e\theta_{e \cdot x}\theta_{ e \cdot x}$. Thus, since the map $\theta_e$ is cubic, we get
\begin{align*}
    \theta_e\theta_{e \cdot x}&=\theta_e\theta_e\theta_{e \cdot x}\theta_{ e \cdot x}=\theta_e\theta_e(\theta_e\theta_{e \cdot x}\theta_{ e \cdot x})(\theta_e\theta_{e \cdot x}\theta_{ e \cdot x})=(\theta_e\theta_{e \cdot x}\theta_{ e \cdot x})(\theta_e\theta_{e \cdot x}\theta_{ e \cdot x})=\theta_{e \cdot x}\theta_{e \cdot x},
\end{align*}
i.e., the equality in $2.$ follows. Moreover, 
\begin{align*}
\theta_{e \cdot x}\theta_e&\underset{\eqref{rel}}{=} \theta_{e \cdot x}(\theta_{e \cdot x}\theta_e\theta_{e \cdot x \cdot e}) \underset{2.}{=}\theta_{e }\theta_{e \cdot x}\theta_e\theta_{e \cdot  x \cdot e}\underset{\eqref{rel}}{=}\theta_{e}\theta_{e}
\end{align*}
and, finally, 
            $\theta_e\underset{\eqref{rel}}{=}\theta_{e\cdot x\cdot e}\theta_e\theta_{e\cdot x\cdot e}\underset{3.}{=}\theta_e\theta_e\theta_{e\cdot x\cdot e}\underset{2.}{=}\theta_e\theta_{e\cdot x\cdot e}\theta_{e\cdot x\cdot e}\underset{\eqref{rel}}{=}\theta_{e\cdot x\cdot e}$.
    \end{proof}
\end{lemma}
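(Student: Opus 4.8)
The plan is to obtain all four equalities from the single identity \eqref{rel}, namely $\theta_x=\theta_y\theta_x\theta_{y\cdot x}$, by suitably specializing the free variables and exploiting idempotency of $e$ (so that $e\cdot e=e$, and more generally $e\cdot e\cdot x=e\cdot x$, $(e\cdot x\cdot e)\cdot e=e\cdot x\cdot e$, and so on). The four statements should be proved in the listed order, since each one is used in the next.

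Part 1 is immediate: setting $x=y=e$ in \eqref{rel} and using $e\cdot e=e$ gives $\theta_e=\theta_e\theta_e\theta_e$, so $\theta_e$ behaves like a ``cubic'' idempotent-like map. For part 2, the first move is to specialize \eqref{rel} with $y=e$ and with $e\cdot x$ in place of $x$, which---after using $e\cdot e\cdot x=e\cdot x$---yields the key relation $\theta_{e\cdot x}=\theta_e\theta_{e\cdot x}\theta_{e\cdot x}$. I would then argue by bootstrapping: starting from $\theta_e\theta_{e\cdot x}$, substitute this relation into the rightmost factor, then substitute it again, so as to create a leading block $\theta_e\theta_e\theta_e$ which collapses to $\theta_e$ by part 1; regrouping what remains into two copies of $\theta_e\theta_{e\cdot x}\theta_{e\cdot x}=\theta_{e\cdot x}$ leaves exactly $\theta_{e\cdot x}\theta_{e\cdot x}$. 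Lining up the substitutions and regroupings so that the cubic block actually appears is the one genuinely delicate point; everything else is bookkeeping.

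For part 3, I would specialize \eqref{rel} with $x\mapsto e$ and $y\mapsto e\cdot x$ to get $\theta_e=\theta_{e\cdot x}\theta_e\theta_{e\cdot x\cdot e}$, precompose both sides on the left with $\theta_{e\cdot x}$, rewrite the resulting $\theta_{e\cdot x}\theta_{e\cdot x}$ as $\theta_e\theta_{e\cdot x}$ using part 2 in the form $\theta_{e\cdot x}\theta_{e\cdot x}=\theta_e\theta_{e\cdot x}$, and then recognize $\theta_{e\cdot x}\theta_e\theta_{e\cdot x\cdot e}$ as $\theta_e$ again via the same instance of \eqref{rel}, ending with $\theta_e\theta_e$. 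For part 4, I would apply \eqref{rel} with $y=e\cdot x\cdot e$ to write $\theta_e=\theta_{e\cdot x\cdot e}\theta_e\theta_{e\cdot x\cdot e}$, then use part 3 (with $x\cdot e$ in place of $x$) to replace the prefix $\theta_{e\cdot x\cdot e}\theta_e$ by $\theta_e\theta_e$, then part 2 (again with $x\cdot e$ in place of $x$) to replace $\theta_e\theta_{e\cdot x\cdot e}$ by $\theta_{e\cdot x\cdot e}\theta_{e\cdot x\cdot e}$, and finally \eqref{rel} once more to read $\theta_e\theta_{e\cdot x\cdot e}\theta_{e\cdot x\cdot e}$ as $\theta_{e\cdot x\cdot e}$. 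Thus parts 1, 3 and 4 are straightforward substitution chains, and I expect the only real obstacle to be organizing the bootstrapping argument in part 2.
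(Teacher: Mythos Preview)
Your proposal is correct and follows essentially the same route as the paper: all four parts are derived from \eqref{rel} alone, in the listed order, with parts 3 and 4 matching the paper's chains step for step. The only minor discrepancy is in part~2, where your verbal description (``substitute into the rightmost factor, then substitute it again \ldots\ two copies'') is slightly ambiguous; the paper makes this concrete by expanding \emph{both} $\theta_{e\cdot x}$ factors in $\theta_e\theta_e\theta_{e\cdot x}\theta_{e\cdot x}$ simultaneously to obtain $\theta_e\theta_e(\theta_e\theta_{e\cdot x}\theta_{e\cdot x})(\theta_e\theta_{e\cdot x}\theta_{e\cdot x})$, then collapses the leading $\theta_e^3$ and reads off the two copies --- exactly the bookkeeping you flagged as the delicate point.
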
  

\smallskip


\smallskip

As an immediate consequence, if $(S, \cdot)$ is a monoid, we have the following description.

\begin{prop}\label{monoid}
Let $(S, \cdot, \ast)$ be an APA such that $(S, \cdot)$ is a monoid with identity $1$. Then $(S,\ast)$ is determined by $\theta_1$. 
\begin{proof}
By Lemma \ref{lm:cubic}, for each $x\in X$, we obtain
$\theta_1\theta_x=\theta_x\theta_x$ and $\theta_x\theta_1=\theta_1\theta_1$.
Using \eqref{rel}, this implies that $\theta_x=\theta_1\theta_x\theta_x=\theta_1\theta_1\theta_x=\theta_x\theta_1\theta_x=\theta_1$.
\end{proof}
\end{prop}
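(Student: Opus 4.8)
The plan is to bootstrap from \cref{lm:cubic} applied at the idempotent $e=1$: the identity of a monoid is in particular an idempotent of $(S,\cdot)$, so every clause of that lemma is available with $e=1$. Because $1\cdot x=x=x\cdot 1$ for all $x\in S$, clauses $2.$ and $3.$ collapse to the two relations $\theta_1\theta_x=\theta_x\theta_x$ and $\theta_x\theta_1=\theta_1\theta_1$, valid for every $x\in S$. I expect these, together with \eqref{rel}, to suffice.

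First I would instantiate \eqref{rel} with the pair $(1,x)$ in the roles of $(y,x)$; using $1\cdot x=x$ this reads $\theta_x=\theta_1\theta_x\theta_x$. Then I would rewrite the right-hand side in two steps: replacing the trailing product $\theta_x\theta_x$ by $\theta_1\theta_x$ gives $\theta_x=\theta_1\theta_1\theta_x$, and then replacing the leading product $\theta_1\theta_1$ by $\theta_x\theta_1$ gives $\theta_x=\theta_x\theta_1\theta_x$. Finally I would instantiate \eqref{rel} once more, now with the pair $(x,1)$ in the roles of $(y,x)$; using $x\cdot 1=x$ this gives $\theta_1=\theta_x\theta_1\theta_x$. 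Comparing the last two displays yields $\theta_x=\theta_1$ for every $x\in S$, which is exactly the assertion that $(S,\ast)$ is determined by $\theta_1$.

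I do not anticipate a genuine obstacle: all the real work sits in \cref{lm:cubic} (hence ultimately in \eqref{p_one}, \eqref{p_two} and \eqref{rel}), and once that is in hand the proposition is a short rewriting. The only point needing care is that the argument uses the identity on both sides --- namely $1\cdot x=x$ in the first application of \eqref{rel} and of the cubic lemma, and $x\cdot 1=x$ in the second application of \eqref{rel}. This two-sidedness is precisely what the monoid hypothesis supplies, and is the one spot where a mere one-sided identity would not obviously be enough.
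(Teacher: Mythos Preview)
Your proposal is correct and follows exactly the paper's route: apply \cref{lm:cubic} at $e=1$ to obtain $\theta_1\theta_x=\theta_x\theta_x$ and $\theta_x\theta_1=\theta_1\theta_1$, then chain \eqref{rel} at $(1,x)$ and $(x,1)$ through these identities to get $\theta_x=\theta_1\theta_x\theta_x=\theta_1\theta_1\theta_x=\theta_x\theta_1\theta_x=\theta_1$. Your remark that both sides of the identity are genuinely used is apt and matches how the paper treats one-sided identities separately elsewhere.
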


\smallskip

\section{The variety of \texorpdfstring{$P_k$}{}-semigroups}
In this section, we investigate a special class of semigroups which we will later use to characterize some families of APA.

\medskip

Let $(S,\ast)$ be a semigroup and $n\in \mathbb{N}$. For any $x\in S$, let us denote by $x^n$ the  $n$th power of $x$ in $(S, \ast)$.

\begin{defin}
Let $0<k\in \mathbb{N}$. A semigroup  $(S,\ast)$ is called \emph{$P_k$-semigroup} if it satisfies the identity: for all $x,y,z \in S$  
    \begin{align}\label{P}
x\ast y^k\ast z=x\ast z \tag{$P_k$}.
\end{align}
\end{defin}

\begin{rems} \hspace{1mm}
\begin{enumerate}
 \item  $P_1$-semigroups are exactly Kimura semigroups \cite{Kimura, Agore}.
    \item  Observe that a $P_r$-semigroup also is a $P_{rk}$-semigroup, for all $k \in \mathbb{N}$. In particular, 
    if $k\mid m$ then a $P_k$-semigroup is also a $P_m$-semigroup. 
    \item If a semigroup is $P_k$-semigroup and $P_r$-semigroup then it is $P_m$-semigroup, where $m=\gcd(k,r)$. 
\end{enumerate}
\end{rems}

\begin{ex}\label{ex:1gen}
The semigroups $(\{a,b,c\},\ast)$ and $(\{a,b,c\},\ast')$ in \cref{ex_left} are commutative $P_2$-semigroups.
\end{ex}

\begin{ex}
Let $S$ be a non-empty set and let $\gamma\in S^S$ be an idempotent mapping. 
Then $(S,\ast)$ determined by $\gamma$ is a $P_k$-semigroup, for any $0<k\in \mathbb{N}$. In particular, each right-zero semigroup and each zero-semigroup is a $P_k$-semigroup, for any $0<k\in \mathbb{N}$. 
\end{ex}

\smallskip

\begin{nota*}
In the following, let $\mathcal{V}_{P_k}$ be the variety of all right-normal semigroups satisfying \eqref{P}. In this respect, we recall that a semigroup $(X,\ast)$ is \emph{right-normal} if $
x\ast y\ast z=y\ast x\ast z$, for all $x,y,z\in X$.
\end{nota*}

\smallskip

\begin{lemma}\label{lemmaP} 
    Let $(S,\ast) \in \mathcal{V}_{P_k}$.
    Then
    $x^k\ast z=y^k \ast z=z^{k+1},$
   for all $x,y,z \in S$.
    \begin{proof}
      For $x,y,z \in S$,  we have  
      $x^k\ast z\underset{\eqref{P}}{=}x^k\ast y^k\ast z= y^{k}\ast x^k \ast z
               \underset{\eqref{P}}{=}y^k\ast z.$
      Taking $z=y$, we obtain $x^k\ast z=z^{k+1}$.
    \end{proof}
\end{lemma}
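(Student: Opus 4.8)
The plan is to exploit right-normality to interchange the blocks $x^k$ and $y^k$ inside a three-term product, and then to close the argument with two applications of the identity \eqref{P}. First I would record the following elementary consequence of right-normality: prefixing the defining identity $a\ast b\ast c=b\ast a\ast c$ by an arbitrary element $w$ gives $w\ast a\ast b\ast c=w\ast b\ast a\ast c$, so in any $\ast$-product of length $n\geq 2$ one may transpose adjacent factors at every position except the last; since such transpositions generate the whole symmetric group on the first $n-1$ factors, the first $n-1$ factors of any product may be permuted arbitrarily while the last one is held fixed. Applying this to the product $x^k\ast y^k\ast z$, which has length $2k+1$ and final factor $z$, yields $x^k\ast y^k\ast z=y^k\ast x^k\ast z$.

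Next I would assemble the chain
\[
x^k\ast z\;\underset{\eqref{P}}{=}\;x^k\ast y^k\ast z\;=\;y^k\ast x^k\ast z\;\underset{\eqref{P}}{=}\;y^k\ast z ,
\]
in which the two outer equalities are instances of \eqref{P} (applied with its leading variable specialised to $x^k$, respectively to $y^k$) and the middle equality is the block-swap just established. This already proves $x^k\ast z=y^k\ast z$ for all $x,y,z\in S$.

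Finally, I would specialise $x:=z$ in the identity just obtained. Since $z^k\ast z=z^{k+1}$ by the definition of the $k$th power in $(S,\ast)$, this gives $y^k\ast z=z^{k+1}$, and together with the previous step we obtain $x^k\ast z=y^k\ast z=z^{k+1}$ for all $x,y,z\in S$, which is the claim. I do not anticipate a genuine obstacle; the only step needing slight care is the block-swap $x^k\ast y^k\ast z=y^k\ast x^k\ast z$, where one must note that an adjacent transposition is legitimate precisely because a nonempty tail (the factor $z$ together with the remaining copies of $y$ or $x$) always sits to its right. Everything else is direct substitution.
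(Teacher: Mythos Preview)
Your proposal is correct and follows essentially the same route as the paper: insert $y^k$ via \eqref{P}, swap the blocks $x^k$ and $y^k$ using right-normality, then remove $x^k$ via \eqref{P}; the final specialisation (you take $x:=z$, the paper takes $z:=y$) is cosmetically different but amounts to the same thing. Your more detailed justification of the block-swap is fine but not strictly needed, since a single application of right-normality with $a=x^k$, $b=y^k$, $c=z$ already suffices.
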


\smallskip

Now we present a general construction of a right-normal  $P_k$-semigroup. \\
Let $X$ be a non-empty set of arbitrary cardinality and 
let $+_k$ denote the addition modulo $k$. 
Let $x_0\in X$ and $f, g \in \mathbb{Z}_k^{X}$. Let us define two new functions:
   \begin{align*}
   S[f,g;x_0](x)&:=\begin{cases}
       f(x)+_k g(x) &\text{if $x\neq x_0$} \\
        f(x_0)+_kg(x_0)+_k 1& \text{if $x=x_0$}
   \end{cases}, \qquad 
   T_{x_0}(x):=\begin{cases} 0
        &\text{if $x\neq x_0$} \\
        k& \text{if $x=x_0$}
   \end{cases}.
\end{align*} 
 Let us consider the set
\begin{align*}
&    \mathcal{A}_k(X):=\biggl\{ f\in \mathbb{Z}_k^{X}\biggr\} \times X\cup \bigcup_{x\in X} \biggl\{T_{x}\in \{0, k\}^{X}\biggr\} \times \{x\}.
\end{align*}
Define the following binary operation on the set $\mathcal{A}_k(X)$: for $(f,y),(g,z)\in \mathcal{A}_k(X)$ set
\begin{align*}
   (f,y) \, \ast \, (g,z):=
   \begin{cases}
       (S[f,g;y],z)  &\text{if} \,\, \exists x \in X \, S[f,g;y](x)\neq 0\\
        (T_{z},z)& \text{otherwise}
   \end{cases}.
\end{align*}

\smallskip

\begin{theor}\label{thm:VPK}
For any non-empty set $X$, the algebra  $\left(\mathcal{A}_k(X), \, \ast \right) \in \mathcal{V}_{P_k}$.
\end{theor}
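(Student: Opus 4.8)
The plan is to push everything down to bookkeeping in the abelian group $\mathbb{Z}_k^X$ (pointwise addition modulo $k$). For a function $h$ occurring as a first component of an element of $\mathcal{A}_k(X)$ — that is, $h\in\mathbb{Z}_k^X$ or $h=T_x$ for some $x$ — write $\overline{h}\in\mathbb{Z}_k^X$ for its pointwise reduction mod $k$, so $\overline{f}=f$ for $f\in\mathbb{Z}_k^X$ and $\overline{T_x}=\mathbf{0}$ (since the value $k$ is $0$ modulo $k$); for $x\in X$ let $\delta_x\in\mathbb{Z}_k^X$ be the indicator of $x$. The first step is a purely computational \emph{bookkeeping lemma}: for all $a=(f,y)$ and $b=(g,z)$ in $\mathcal{A}_k(X)$ one has $S[f,g;y]=\overline{f}+\overline{g}+\delta_y$ in $\mathbb{Z}_k^X$; hence $a\ast b\in\mathcal{A}_k(X)$, its second component is $z$, its reduced first component is $\overline{f}+\overline{g}+\delta_y$, and $a\ast b=(T_z,z)$ precisely when $\overline{f}+\overline{g}+\delta_y=\mathbf{0}$ (otherwise $a\ast b=(\overline{f}+\overline{g}+\delta_y,z)$). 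Proving the identity $S[f,g;y]=\overline{f}+\overline{g}+\delta_y$ is a short case check according to whether each of $f,g$ lies in $\mathbb{Z}_k^X$ or is one of the $T_x$'s (using that $f=T_w$ forces $w=y$, and that the value $k$ taken by a $T_x$ is $0$ modulo $k$).

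From the lemma I would extract the following principle, used repeatedly: \emph{two products, each of at least two factors from $\mathcal{A}_k(X)$, that have equal reduced first components and equal second components are equal}. Indeed, applying the lemma to the outermost multiplication, such a product is a $T$-element iff its reduced first component is $\mathbf{0}$; so either both products are $(T_z,z)$ for the common second component $z$, or both are honest pairs, in which case the first component equals its own reduction. It then suffices to carry the pair (reduced first component, second component) through each defining identity. For associativity, two applications of the lemma give that both $(a\ast b)\ast c$ and $a\ast(b\ast c)$ have second component $c_2$ and reduced first component $\overline{a_1}+\overline{b_1}+\overline{c_1}+\delta_{a_2}+\delta_{b_2}$ (using $(a\ast b)_2=b_2$ and $(b\ast c)_2=c_2$), so they coincide. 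Right-normality is the identical computation with the first two factors exchanged, invoking commutativity of $+$ in $\mathbb{Z}_k^X$.

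For the identity \eqref{P}, an easy induction on $n\ge 1$ with the lemma shows that $(g,z)^n$ has second component $z$ and reduced first component $n\overline{g}+(n-1)\delta_z$; at $n=k$ this equals $(k-1)\delta_z$, since $k\overline{g}\equiv\mathbf{0}$. Applying the lemma twice to $a\ast(g,z)^k\ast b$, its reduced first component is $\overline{a_1}+(k-1)\delta_z+\delta_{a_2}+\overline{b_1}+\delta_z=\overline{a_1}+\overline{b_1}+\delta_{a_2}$ (the $\delta_z$ terms cancel modulo $k$) and its second component is $b_2$ — exactly the data of $a\ast b$. By the equality principle, $a\ast(g,z)^k\ast b=a\ast b$, so $(\mathcal{A}_k(X),\ast)$ satisfies \eqref{P} and lies in $\mathcal{V}_{P_k}$.

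I expect the main obstacle to be the bookkeeping lemma itself: one must keep straight the two descriptions of elements of $\mathcal{A}_k(X)$ and verify that the rule "output $(T_z,z)$ when $S[f,g;y]$ vanishes" is compatible with the reduce-mod-$k$ viewpoint in every case — in particular for products of length $\ge 3$ in which an intermediate factor is already a $T$-element, so that the "equal data implies equal elements" principle genuinely applies. Once that lemma is set up, associativity, right-normality and \eqref{P} each reduce to a single line of arithmetic in $\mathbb{Z}_k^X$.
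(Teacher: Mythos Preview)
Your proposal is correct and follows the same mathematical idea as the paper's proof: both hinge on the observation that the exceptional value $k$ carried by a $T$-element vanishes under $+_k$, so the first component of any product is determined by the reductions $\overline{f}$ together with the $\delta$-contributions from the second components. The paper carries this out by explicit case splits --- writing out $S[S[e,f;u],g;y](x)$ and $S[e,S[f,g;y];u](x)$ according to whether $x=u$, $x=y$, or neither, and separately checking the ``otherwise'' branch where a $T$-element appears --- whereas you package the same content into the bookkeeping lemma $S[f,g;y]=\overline{f}+\overline{g}+\delta_y$ and the equality principle for products of length $\geq 2$. Your organization is cleaner: once the lemma is in place, associativity, right-normality and \eqref{P} each become a one-line identity in the abelian group $\mathbb{Z}_k^X$, and you never have to revisit the $T$-branch by hand. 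The paper's approach has the minor advantage of being entirely self-contained at the level of the defining formulas, with no auxiliary notation, but at the cost of several pages of parallel case analysis that your lemma absorbs in one stroke.
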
 
  \begin{proof}
 Let $(e,u), (f,y),(g,z)\in \mathcal{A}_k(X)$. Then we obtain
\begin{align*}
&L=\left(\left(e,u\right) \, \ast \, \left(f,y\right) \right)     \, \ast \, (g,z)=
\begin{cases}
       \left(S[e,f;u],y\right) \, \ast \, \left(g,z\right) &\text{if} \,\, \exists x \in X \, S[e,f;u](x)\neq 0\\
        (T_{y},y)\ast \, \left(g,z\right)& \text{otherwise}
   \end{cases}.
 \end{align*}
Let us observe that if $S[e,f;u](x)= 0$ for all $x\in X$, we  have 
\begin{align*}
&T_y(x)=\begin{cases}
      S[e,f;u](x)  &\text{if $x\neq y$}       \\
        S[e,f;u](y)+k& \text{if $x= y$}
   \end{cases}.
\end{align*} 
Hence, for any $a\in \mathbb{Z}$, 
\begin{align*}
&T_y(x)+_ka=S[e,f;u](x)+_ka \quad {\rm for}\; x\neq y, \quad {\rm and}\\
&T_y(y)+_ka=\left(S[e,f;u](y)+k\right)+_ka=S[e,f;u](y)+_ka \quad {\rm for}\; x= y.
\end{align*}
Further, 
 \begin{align*}
S[S[e,f;u],g;y](x)&
   &=\begin{cases}
       e(x)+_kf(x)+_kg(x)  &\text{if $x\neq u$ and $x\neq y$} \\
e(x)+_kf(x)+_k1+_kg(x)+_k 1 &\text{if
       $x=y=u$}\\
         e(u)+_kf(u)+_k1+_kg(u) &\text{if $x=u$ and $x\neq y$}      \\
        e(y)+_kf(y)+_kg(y)+_k1  &\text{if $x=y$ and $x\neq u$}
   \end{cases}   
\end{align*}

If there is $x\in X$ such that $S[S[e,f;u],g;y](x)\neq 0$ then $L=(S[S[e,f;u],g;y],z)$. Otherwise, $L=(T_z,z)$.
\\
\noindent
On the other hand,
 \begin{align*}
&R=\left(e,u\right) \, \ast \, \left(\left(f,y\right)      \, \ast \, (g,z)\right)=
\begin{cases}
       \left(e,u\right) \, \ast \,\left(S[f,g;y],z\right)
       &\text{if} \,\, \exists x \in X \, S[f,g;y](x)\neq 0\\
        \left(e,u\right)\ast (T_{z},z) & \text{otherwise}  \end{cases}.
 \end{align*}
 Similarly as previously, for any $a\in \mathbb{Z}$,
 \begin{align*}
 a+_kT_z(x)=a+_kS[f,g;y](x),\quad {\rm for}\; x\in X.
 \end{align*}
We have  
 \begin{align*}
S[e,S[f,g;y];u](x)&
   &=\begin{cases}
      e(x)+_kf(x)+_kg(x)  &\text{if $x\neq u$ and $x\neq y$}\\
      e(x)+_kf(x)+_kg(x)+_k 1+_k 1 &\text{if $x=y=u$}\\
        e(u)+_kf(u)+_kg(u)+_k1  &\text{if $x=u$ and $x\neq y$}       \\
        e(y)+_kf(y)+_kg(y)+_k1  &\text{if $x=y$ and $x\neq u$}  
   \end{cases}
\end{align*}
If there is $x\in X$ such that $S[e,S[f,g;y];u](x)\neq 0$, then $R=(S[e,S[f,g;y];u],z)$. Otherwise, $R=(T_z,z)$. 
Thus, $L=R$. Therefore, the operation $\ast$ is associative.
\vskip 2mm
By above calculations and since the addition $+_k$ is commutative, $\left(\mathcal{A}_k(X), \, \ast \right)$ is a right-normal semigroup. 

\vskip 3mm
\noindent
Now let us show that $\left(\mathcal{A}_k(X), \, \ast \right)$ is a $P_k$-semigroup. Note that 
$$(f,y)^k=(\underset{(k-1)-times}{\underbrace{S[\ldots S[S[S}}[f,f;y],f;y],f;y]\ldots,f;y],y)$$ and     
\begin{align*}
h(x):&=\underset{(k-1)-times}{\underbrace{S[\ldots S[S[S}}[f,f;y],f;y],f;y]\ldots,f;y](x)\\
&=\begin{cases}\underset{k-times}{\underbrace{f(x)+_k\ldots+_kf(x)}}=0
&\text{if $x\neq y$
}       \\
f(y)+_k\underset{(k-1)-times}{\underbrace{f(y)+_k1+_k\ldots+_kf(y)+_k1}}=k-1 &\text{if $x= y$}
      \end{cases}\, .
      \end{align*}
 Then
\begin{align*}
&S[e,h;u](x)=\begin{cases}
      e(x)  &\text{if  ($x\neq u$ 
      and $x\neq y$) or $x=u=y$}
              \\
        e(y)+_k(k-1)  &\text{if $x=y$ and $x\neq u$}
        \\
        e(u)+_k1  &\text{if $x=u$  and $x\neq y$}
   \end{cases}\, .
\end{align*}
If there is $x\in X$ such that $S[e,h;u](x)\neq 0$ then $\left(e,u\right)\,\ast\,\left(f,y\right)^k=\left(S[e,h;u],y\right)$. Otherwise, $\left(e,u\right)\;\ast\;\left(f,y\right)^k=(T_y,y)$. 
\\
\noindent
Further,
\begin{align*}
&S[S[e,h;u],g;y](x)=\begin{cases}
      e(x)+_kg(x)  &\text{if $x\neq u$ 
      and $x\neq y$
      }        \\
        e(y)+_k(k-1)+_kg(y)+_k1=e(y)+_kg(y)  &\text{if $x=y$ and $x\neq u$}
        \\
        e(u)+_k1+_kg(u)  &\text{if $x=u$}  
   \end{cases}
\end{align*}
Then, $S[S[e,h;u],g;y]=S[e,g;u]$. Moreover,  
\begin{align*}
&S[T_y,g;y](x)=\begin{cases}
      g(x)  &\text{if $x\neq y$ 
      }        \\
        k+_k1+_kg(y)=g(y)+_k1  &\text{if $x=y$}  
   \end{cases}.
\end{align*}
If $S[e,h;u](x)=0$ for all $x\in X$, we have $S[T_y,g;y]=S[e,g;u]$, which means that $\left(e,u\right)\,\ast\,\left(f,y\right)^k\,\ast\,\left(g,z\right)=\left(e,u\right)\,\ast\,\left(g,z\right)$.\\ 
Therefore, the claim follows.
  \end{proof} 
  
 \smallskip

 \begin{rem} We observe that for any $f\in \mathbb{Z}_k^X$ and $y\in X$, 
 $(f,y)^k=({\bf 0},y)^k$, 
 where ${\bf 0}\colon X\to \mathbb{Z}_k, \,  {\bf 0}(x):=0$.
 \end{rem}

\smallskip

\begin{lemma}
For the semigroup $\left(\mathcal{A}_k(X), \, \ast \right)$, $\E\left(\mathcal{A}_k(X), \, \ast \right)=\{({\bf 0},x)^k\mid x\in X\}$.
\end{lemma}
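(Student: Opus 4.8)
The plan is to prove the two inclusions separately, working directly from the definition of $\ast$ on $\mathcal{A}_k(X)$. For the inclusion ``$\supseteq$'', fix $x\in X$. As computed in the proof of Theorem~\ref{thm:VPK} (together with the remark following it), $({\bf 0},x)^k=(h,x)$, where $h\colon X\to\mathbb{Z}_k$ is given by $h(x)=k-1$ and $h(z)=0$ for $z\neq x$. I would then simply check that $S[h,h;x]=h$: indeed $S[h,h;x](z)=h(z)+_kh(z)=0$ for $z\neq x$, while $S[h,h;x](x)=(k-1)+_k(k-1)+_k1=k-1$. Since $h$ is not identically $0$, the definition of $\ast$ gives $(h,x)\ast(h,x)=(S[h,h;x],x)=(h,x)$, so $({\bf 0},x)^k\in\E(\mathcal{A}_k(X),\ast)$; as the second coordinate recovers $x$, these elements are pairwise distinct.

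For ``$\subseteq$'', let $(f,y)\in\mathcal{A}_k(X)$ satisfy $(f,y)\ast(f,y)=(f,y)$, and distinguish the two blocks of $\mathcal{A}_k(X)$. If $(f,y)=(T_y,y)$, then $S[T_y,T_y;y](z)=0$ for $z\neq y$ and $S[T_y,T_y;y](y)=k+_kk+_k1=1\neq0$, so $(T_y,y)\ast(T_y,y)$ lies in the $\mathbb{Z}_k^X$-block and takes value $1\neq k$ at $y$, hence differs from $(T_y,y)$; this case is impossible. If instead $f\in\mathbb{Z}_k^X$, then $(f,y)\ast(f,y)$ equals $(S[f,f;y],y)$ unless $S[f,f;y]$ is identically $0$, in which case it equals $(T_y,y)\neq(f,y)$; so idempotency forces $S[f,f;y]=f$. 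Comparing values, $2f(z)=f(z)$ for $z\neq y$ forces $f(z)=0$, and $2f(y)+_k1=f(y)$ forces $f(y)=k-1$, whence $f=h$ and $(f,y)=({\bf 0},y)^k$.

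I expect the only real point of care to be the bookkeeping forced by the case split in $\ast$: one must consistently rule out the ``otherwise'' branch (which collapses a product to $(T_z,z)$) whenever it would land in the wrong block, and remember that each symbol $(T_x,x)$ is formally distinct from every pair with first coordinate in $\mathbb{Z}_k^X$. The argument uses $k-1\neq0$, i.e.\ $k\geq2$, which is the substantive case; for $k=1$ the construction degenerates and the statement should be read accordingly.
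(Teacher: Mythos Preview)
Your proof is correct and follows essentially the same approach as the paper: both rule out $(T_y,y)$ as an idempotent by direct computation, then show for $f\in\mathbb{Z}_k^X$ that $S[f,f;y]=f$ forces $f(y)=k-1$ and $f(z)=0$ for $z\neq y$, and finally identify this $f$ with the function $h$ arising in the computation of $({\bf 0},y)^k$ from Theorem~\ref{thm:VPK}. Your treatment is in fact slightly more careful than the paper's in handling the ``otherwise'' branch of the definition of $\ast$ and in flagging the degenerate case $k=1$.
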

\begin{proof}
First note that 
\begin{align*}
   (T_y,y) \, \ast \, (T_y,y)= (S[T_y,T_y;y],y) \neq (T_y,y) 
\end{align*}
for any $y\in X$. Now, let $(f,y)\in \mathcal{A}_k(X)$ and $f\neq T_y$. Then 
    \begin{align*}
   (f,y) \, \ast \, (f,y)=
        (S[f,f;y],y) =(f,y) \quad \Leftrightarrow\quad S[f,f;y]=f.
\end{align*} 
Hence, $(f,y)$ is an idempotent element if and only if $f(y)=k-1$ and for $x\neq y$, $f(x)=0$. By the proof of \cref{thm:VPK} we obtain $f(x)=\underset{(k-1)-times}{\underbrace{S[\ldots S[S[S}}[f,f;y],f;y],f;y]\ldots,f;y](x).$ \end{proof}


\begin{ex}
Let $k=2$ and $X$ be a non-empty set of arbitrary cardinality. 
Let $x_0\in X$ and $f, g \in \mathbb{Z}_2^{X}$. Let us define two functions:
   \begin{align*}
   S[f,y;x_0](x):=\begin{cases}
       f(x)+_2 g(x) &\text{if $x\neq x_0$} \\
        f(x_0)+_2g(x_0)+_2 1& \text{if $x=x_0$}
   \end{cases},
   \qquad
   T_{x_0}(x):=\begin{cases} 0
        &\text{if $x\neq x_0$} \\
        2& \text{if $x=x_0$}
   \end{cases}
\end{align*} 
and let  
\begin{align*}
&    \mathcal{A}_2(X):=\biggl\{ f\in \mathbb{Z}_2^{X}\biggr\} \times X\cup \bigcup_{x\in X} \biggl\{T_{x}\in \{0, 2\bigr\}^{X}\biggr\} \times \{x\}.
\end{align*}
Defining on the set $\mathcal{A}_2(X)$ the following binary operation:  for $(f,y),(g,z)\in \mathcal{A}_2(X)$:
\begin{align*}
   (f,y) \, \ast \, (g,z):=
   \begin{cases}
       (S[f,g;y],z) &\text{if  $\exists \, x\in X$}       \;  \text{$S[f,g;y](x)=1$}
      \\
        (T_{z},z)& \text{otherwise}
   \end{cases},
   \end{align*}
we obtain $(\mathcal{A}_2(X),\ast)$ is a right-normal $P_2$-semigroup.
\end{ex}

 \medskip
 
  Now, let us consider the set \begin{align*}
      \displaystyle\mathcal{A}_kfin(X):=\{f_\alpha\in \mathcal{A}_k(X)\, \colon \displaystyle{\exists_{\alpha\subseteq X}}\; \alpha-\text{a finite set},\; \forall_{ x\in X\setminus \alpha}\; f(x)=0\}.
  \end{align*}
  It is easy to check that $\mathcal{A}_kfin(X)$ is a subsemigroup of $\left(\mathcal{A}_k(X), \, \ast \right)$. 
  Further, consider 
  $\mathcal{X}:=\{({\bf 0},x)\, \colon \, x\in X\}$.

\begin{theor}\label{thm:V2free}

For any non-empty set $X$, the algebra  $\left(\mathcal{A}_kfin(X), \, \ast \right)$ is free in the variety $\mathcal{V}_{P_k}$ over the set $\mathcal{X}$.
\end{theor}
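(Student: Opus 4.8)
The plan is to verify the universal mapping property directly. First I would recall the standard normal-form reduction inside $\mathcal{V}_{P_k}$: using right-normality to sort letters and the identity \eqref{P} (together with \cref{lemmaP}, which gives $x^k\ast z=z^{k+1}$) one shows that every element of a $\mathcal{V}_{P_k}$-semigroup generated by a set $\mathcal X$ can be written either as $x_{i_1}\ast\cdots\ast x_{i_{m-1}}\ast x_{i_m}$ with the first $m-1$ letters forming a multiset in which each generator occurs with multiplicity at most $k-1$ and the last letter $x_{i_m}$ arbitrary, or (when the "body" collapses) in the special collapsed form $x^{k}\ast\,(\text{tail})$. This tells us a free $\mathcal{V}_{P_k}$-algebra on $\mathcal X$ is spanned, as a set, by such normal forms; the content of the theorem is that in $\mathcal{A}_k fin(X)$ these normal forms are pairwise distinct, i.e. the obvious surjection from the free algebra is injective.

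Next I would set up the candidate homomorphism. Given any $(T,\ast)\in\mathcal V_{P_k}$ and any map $\varphi\colon\mathcal X\to T$, say $({\bf 0},x)\mapsto t_x$, define $\Phi\colon \mathcal A_k fin(X)\to T$ on the generators by $\Phi({\bf 0},x)=t_x$ and extend it via the normal form: for $f_\alpha$ with finite support $\alpha=\{x_1,\dots,x_r\}$ and last coordinate $y$, interpret the pair $(f_\alpha,y)$ as the product $\bigl(\ast_{j}\,t_{x_j}^{\,f_\alpha(x_j)}\bigr)\ast t_y$ in $T$ (in any order — legitimate because $T$ is right-normal, and the value $t_y$ on the right is the unique letter whose multiplicity is unconstrained), and interpret the collapsed pairs $(T_y,y)$ as $t_z^{k}\ast t_y$ for any $z$ (well-defined by \cref{lemmaP}). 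One then checks $\Phi$ is a homomorphism: this amounts to comparing $\Phi((f,y)\ast(g,z))$ with $\Phi(f,y)\ast\Phi(g,z)$ in the two branches of the definition of $\ast$ on $\mathcal A_k fin(X)$. The combinatorics of $S[f,g;y]$ — coordinatewise addition mod $k$ with a twist at the pivot $y$, and the pivot-erasing behaviour that produces $(T_z,z)$ — was already shown in the proof of \cref{thm:VPK} to mirror exactly the $\mathcal V_{P_k}$-identities (associativity, right-normality, $P_k$), so each case reduces to applying those identities in $T$; in particular the "otherwise" branch giving $(T_z,z)$ corresponds to a product in $T$ in which some letter has accumulated multiplicity $k$, at which point \eqref{P}/\cref{lemmaP} absorbs it. Uniqueness of $\Phi$ is immediate since $\mathcal X$ generates $\mathcal A_k fin(X)$ (every $f_\alpha$ is a finite $\ast$-product of the $({\bf 0},x)$'s, as the power computation $(f,y)^k=({\bf 0},y)^k$ and the description of $S$ in the proof of \cref{thm:VPK} show).

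The last and genuinely load-bearing step is that $\mathcal A_k fin(X)$ is itself in $\mathcal V_{P_k}$ and is generated by $\mathcal X$ — the first is inherited from \cref{thm:VPK} once one notes finiteness of support is closed under $\ast$ (already observed in the excerpt), and the second requires checking that the $\ast$-subsemigroup generated by $\{({\bf 0},x):x\in X\}$ exhausts $\mathcal A_k fin(X)$, including the elements $(T_y,y)$, which arise as $({\bf 0},y)^{k}$. I expect the main obstacle to be bookkeeping rather than conceptual: organizing the normal-form argument so that the two "shapes" of elements of $\mathcal A_k fin(X)$ (ordinary $(f,y)$ with $f(x)\le k-1$, and collapsed $(T_y,y)$) are matched bijectively with the two shapes of reduced words in the free $\mathcal V_{P_k}$-algebra, and carefully handling the pivot coordinate $y$ (which is why $f(y)$ is allowed to range over all of $\mathbb Z_k$ via the extra "$+_k 1$" in $S$ while the non-pivot coordinates track honest multiplicities). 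Once the normal-form dictionary is pinned down, freeness follows formally: the spanning normal forms of the free algebra map onto distinct elements of $\mathcal A_k fin(X)$, so the free algebra and $\mathcal A_k fin(X)$ have the same underlying set of reduced words, and $\Phi$ is the required unique extension of $\varphi$.
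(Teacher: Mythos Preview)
Your plan is correct and follows essentially the same route as the paper: define the extension by $(f_\alpha,y)\mapsto \prod_{x\in\alpha}s_x^{\,f_\alpha(x)}\ast s_y$ (with the collapsed elements $(T_y,y)$ sent to $s_y^{\,k+1}$, well-defined by \cref{lemmaP}), verify directly that this is a semigroup homomorphism by comparing the two branches of the definition of $\ast$ using right-normality and $(P_k)$, and deduce uniqueness from the fact that $\mathcal{X}$ generates $\mathcal{A}_kfin(X)$. Your preliminary normal-form discussion is extra motivation that the paper omits, and one minor slip is that $(T_y,y)=({\bf 0},y)^{k+1}$ rather than $({\bf 0},y)^{k}$, but the operative argument is identical.
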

\begin{proof}
 Clearly, the semigroup $\left(\mathcal{A}_kfin(X), \, \ast \right)$ is generated by the set $\mathcal{X}$. We will show that $\left(\mathcal{A}_kfin(X), \, \ast \right)$ has the universal mapping property for the variety  $\mathcal{V}_{P_k}$ over the set $\mathcal{X}$. Let $(S,\ast)\in  \mathcal{V}_{P_k}$ and $h\colon \mathcal{X}\to S$ be a mapping such that for any $({\bf 0},x)\in \mathcal{X}$, $h(({\bf 0},x))=s_x\in S$.
  
Let us define $\overline{h}\, \colon \, \mathcal{A}_kfin(X)\to S$ in the following way:
\begin{align*}
  \overline{h}(({\bf 0},x)):=s_x\quad {\rm and}\quad   \overline{h}\left(\left(f_\alpha,y\right)\right):=\prod_{x\in \alpha}s_x^{f_\alpha(x)}\ast s_y,\; {\rm if}\; f_{\alpha}\neq {\bf 0}.
\end{align*}
We will show that $\overline{h}$ is a semigroup homomorphism. 
For $(f_\alpha,y),(g_\beta,z)\in \mathcal{A}_kfin(X)$ we have
\begin{align*} \overline{h}&\left(\left(f_\alpha,y\right)\ast\left(g_\beta,z\right)\right)=
    \begin{cases}
\overline{h}\left(\left(S[f_\alpha,g_\beta;y],z\right)\right) &\displaystyle{\exists_{x\in \alpha\cup\beta} \;S[f_\alpha,g_\beta;y](x)\neq 0}
      \\
        \overline{h}((T_{z},z))& \text{otherwise}
   \end{cases}
   \\
   &=
   \begin{cases}
       \displaystyle\ \prod_{x\neq y\, \in \,\alpha\cup \beta}s_x^{f_\alpha(x)+_kg_\beta(x)}\ast s_y^{f_\alpha(y)+_kg_\beta(y)+_k1} \ast s_z &\text{if}\; \exists_{x\in \alpha\cup\beta} \;S[f_\alpha,g_\beta;y](x)\neq 0\\ 
       s_z^{k+1}& \text{otherwise}
   \end{cases}
   \end{align*}
   On the other hand,
   \begin{align*}
\overline{h}\left(\left(f_\alpha,y\right)    \right)\ast\overline{h}\left(\left(g_\beta,z\right)\right)=
       \prod_{x\in \alpha}s_x^{f_\alpha(x)}\ast s_y\ast
       \prod_{x\in \beta}s_x^{g_\beta(x)}\ast s_z.
   \end{align*}
   By right-normal law and Lemma \ref{lemmaP} we obtain:
   \begin{align*}
       \prod_{x\in \alpha}&s_x^{f_\alpha(x)}\ast s_y\ast
       \prod_{x\in \beta}s_x^{g_\beta(x)}\ast s_z=
       \displaystyle \prod_{x\in \alpha\cup\beta}s_x^{f_\alpha(x)+_kg_\beta(x)}\ast s_y\ast s_z\\
       &        =\begin{cases}
       \displaystyle\prod_{x\neq y\, \in \, \alpha\cup \beta}s_x^{f_\alpha(x)+_kg_\beta(x)}\ast s_y^{f_\alpha(y)+_kg_\beta(y)+_k1} \ast s_z &\text{if}\;  \displaystyle{\exists_{x\in \alpha\cup\beta} \;S[f_\alpha,g_\beta;y](x)\neq 0}\\
          s_z^{k+1}& \text{otherwise}
   \end{cases}
   \end{align*}
   Hence, $\overline{h}\left(\left(f_\alpha,y\right)\ast\left(g_\beta,z\right)\right)=\overline{h}\left(\left(f_\alpha,y\right)\right)\ast\overline{h}\left(\left(g_\beta,z\right)\right)$. By definition, 
$\overline{h}_{/\mathcal{X}}=h$. Since the set $\mathcal{X}$ generates the semigroup $\left(\mathcal{A}_kfin(X), \, \ast \right)$, the homomorphism $\overline{h}$ is uniquely defined, which completes the proof.
  \end{proof}

\smallskip

\begin{cor}
Each semigroup $(S,\ast)\in \mathcal{V}_{P_k}$ is a homomorphic image of $\left(\mathcal{A}_kfin(X), \, \ast \right)$, for sufficiently large $X$.   
\end{cor}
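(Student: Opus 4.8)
The plan is to deduce the corollary directly from the freeness statement in Theorem \ref{thm:V2free}, via the classical observation that in any variety every algebra is a homomorphic image of a relatively free algebra built on a sufficiently large set of generators. So essentially no new computation is needed; the work has already been done in proving that $\left(\mathcal{A}_kfin(X),\ast\right)$ is free in $\mathcal{V}_{P_k}$ over $\mathcal{X}$.

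First I would fix the meaning of ``sufficiently large'': given $(S,\ast)\in\mathcal{V}_{P_k}$, choose $X$ with $|X|\ge|S|$; concretely one may simply take $X:=S$. Since $|\mathcal{X}|=|X|\ge|S|$, there is a surjective map $h\colon\mathcal{X}\to S$. Next I would invoke the universal mapping property from Theorem \ref{thm:V2free}: as $\left(\mathcal{A}_kfin(X),\ast\right)$ is free in $\mathcal{V}_{P_k}$ over $\mathcal{X}$ and $(S,\ast)$ belongs to $\mathcal{V}_{P_k}$, the map $h$ extends to a semigroup homomorphism $\overline{h}\colon\mathcal{A}_kfin(X)\to S$ with $\overline{h}_{/\mathcal{X}}=h$.

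Finally I would verify that $\overline{h}$ is onto. The image $\overline{h}\!\left(\mathcal{A}_kfin(X)\right)$ is a subsemigroup of $(S,\ast)$ that contains $\overline{h}(\mathcal{X})=h(\mathcal{X})=S$, hence it equals $S$; therefore $\overline{h}$ is surjective, which is exactly the assertion of the corollary. There is no genuine obstacle here: the only subtlety worth stating explicitly is the cardinality condition $|X|\ge|S|$ hiding behind the phrase ``sufficiently large'', together with the fact that a set $\mathcal{X}$ of that size admits a surjection onto $S$.
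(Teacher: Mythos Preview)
Your proposal is correct and matches the paper's approach: the corollary is stated there without proof, as an immediate consequence of Theorem~\ref{thm:V2free}, and your argument spells out precisely the standard deduction from freeness (choose $X$ large enough to surject onto $S$, extend via the universal property, observe the extension is onto).
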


\medskip

If a set $X$ is finite with $|X|=n\in \mathbb{N}$, the construction reduces to the following one.
Let us consider the set
\begin{align*}
    \mathcal{A}_k(n):&=\biggl\{ (a_1, \dots, a_n, l) \mid a_i \in \mathbb{Z}_k \,  \text{for} \, \,i \in [n], \, l \in [n] \biggr\} \bigcup\biggl\{  (0, \dots, \underset{m}{k}, \dots, 0, m)\mid  m \in [n] \; \biggr\}
\end{align*}
and define the following binary operation: for $(a_1, \dots, a_n, l), (b_1, \dots, b_n, m)\in \mathcal{A}_k(n)$ let us set
\begin{align*}
   (a_1, \dots, a_n, l) \, \ast \, (b_1, \dots, b_n, m):=\begin{cases}
       (s_1, \dots, t_l, \dots, s_n, m) &\text{if there exists $j\in [n]$, $j \neq l$} \\
      \qquad  &\text{such that $s_j\neq 0$ or $t_l\neq 0$}\\
       (0, \dots, \underset{m}{k}, \dots, 0, m) & \text{otherwise}
   \end{cases},
\end{align*}
where for each $i\in [n]$, 
    $s_i:=a_i+_k b_i$ and  $t_i:=a_i+_kb_i+_k 1$.
    
   \smallskip
   
\begin{theor}
For any $k,n\in \mathbb{N}$, the algebra  $\left(\mathcal{A}_k(n), \, \ast \right) \in \mathcal{V}_{P_k}$ and has $nk^n + n$ elements.
\end{theor}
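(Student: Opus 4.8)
The plan is to identify $\left(\mathcal{A}_k(n), \ast\right)$ with the concrete instance of the construction preceding \cref{thm:VPK}, taking $X = [n]$. Under the bijection sending a function $f \in \mathbb{Z}_k^{[n]}$ to the tuple $\bigl(f(1), \dots, f(n)\bigr)$, the pair $(f, y)$ to $\bigl(f(1), \dots, f(n), y\bigr)$, and $T_m$ to $(0, \dots, \underset{m}{k}, \dots, 0)$, the set $\mathcal{A}_k(n)$ is exactly the image of $\mathcal{A}_k([n])$, and the binary operation displayed above is exactly the transport of $\ast$: for $y = l$ the coordinates of $S[f,g;y]$ become $s_i = a_i +_k b_i$ for $i \neq l$ and $t_l = a_l +_k b_l +_k 1$, and the case split ``$\exists\, j \neq l$ with $s_j \neq 0$, or $t_l \neq 0$'' is precisely ``$\exists\, x \in [n]$ with $S[f,g;y](x) \neq 0$''. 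Since $[n]$ is finite, every element of $\mathbb{Z}_k^{[n]}$ has finite support, so $\mathcal{A}_kfin([n]) = \mathcal{A}_k([n])$ and no distinction between the two is needed here. Thus $\left(\mathcal{A}_k(n), \ast\right) \cong \left(\mathcal{A}_k([n]), \ast\right)$, and \cref{thm:VPK} immediately yields $\left(\mathcal{A}_k(n), \ast\right) \in \mathcal{V}_{P_k}$. Alternatively, one can simply rerun the associativity, right-normality and $(P_k)$ computations from the proof of \cref{thm:VPK} verbatim with $X$ replaced by $[n]$.

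For the cardinality, I would count the two members of the union defining $\mathcal{A}_k(n)$ separately. The first, $\bigl\{(a_1, \dots, a_n, l) \mid a_i \in \mathbb{Z}_k, \ l \in [n]\bigr\}$, is in bijection with $\mathbb{Z}_k^{\,n} \times [n]$ and so has $k^n \cdot n$ elements; the second, $\bigl\{(0, \dots, \underset{m}{k}, \dots, 0, m) \mid m \in [n]\bigr\}$, has exactly $n$ elements, one per $m$. These two sets are disjoint, because every tuple in the first has all of its first $n$ coordinates in $\mathbb{Z}_k = \{0, 1, \dots, k-1\}$, whereas each tuple in the second contains the coordinate $k \notin \mathbb{Z}_k$. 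Hence $\lvert \mathcal{A}_k(n) \rvert = n k^n + n$.

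The only real work is the bookkeeping in the first paragraph: one must check that the dictionary between the functional notation $S[f,g;y]$, $T_{x_0}$ and the coordinate notation is faithful on the nose, in particular that the extra $+_k 1$ lands on the coordinate indexed by $l$ (which explains why the nondegeneracy condition is tested as $t_l \neq 0$ at that coordinate and as $s_j \neq 0$ elsewhere), and that a ``$k$'' appearing among the $a_i$ behaves like $0$ under $+_k$, matching the observations about $T_y$ in the proof of \cref{thm:VPK}. Once this identification is in place there is nothing new to prove, since $\mathcal{V}_{P_k}$-membership transfers along isomorphisms. As an aside, combining this with \cref{thm:V2free} shows that $\left(\mathcal{A}_k(n), \ast\right)$ is in fact the free $\mathcal{V}_{P_k}$-semigroup on $n$ generators, although only membership and the count are asserted here.
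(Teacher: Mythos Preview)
Your proposal is correct and matches the paper's approach: the paper states this theorem without proof, presenting $\mathcal{A}_k(n)$ explicitly as the specialization of the general construction $\mathcal{A}_k(X)$ to $X=[n]$, so that membership in $\mathcal{V}_{P_k}$ follows from \cref{thm:VPK} and the cardinality is read off directly from the defining union. Your dictionary between the functional and coordinate notations and your disjointness argument for the count are exactly the intended verifications.
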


\begin{cor}
For any $k\in \mathbb{N}$, the variety $\mathcal{V}_{P_k}$ is locally finite.
\end{cor}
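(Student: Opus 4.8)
The plan is to read local finiteness off directly from the description of the free objects in $\mathcal{V}_{P_k}$, so that essentially nothing remains to be computed. Recall that a variety is locally finite if and only if each of its finitely generated free algebras is finite: one implication is immediate, and conversely any $n$-generated member of the variety is a homomorphic image of the free algebra on $n$ generators, hence finite whenever that free algebra is. Thus it suffices to show that the free $\mathcal{V}_{P_k}$-semigroup on $n$ generators is finite for every $n\in\mathbb{N}$.

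So first I would fix a finite set $X$ with $|X|=n$ and apply \cref{thm:V2free}: the semigroup $\left(\mathcal{A}_kfin(X),\ast\right)$ is free in $\mathcal{V}_{P_k}$ over the $n$-element set $\mathcal{X}=\{({\bf 0},x)\mid x\in X\}$. When $X$ is finite, every $f\in\mathbb{Z}_k^{X}$ has finite support (take $\alpha=X$ in the definition of $\mathcal{A}_kfin(X)$), so $\mathcal{A}_kfin(X)=\mathcal{A}_k(X)$; identifying $X$ with $[n]$, this is exactly the semigroup $\left(\mathcal{A}_k(n),\ast\right)$ introduced just before, which has $nk^n+n$ elements. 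Hence the free $\mathcal{V}_{P_k}$-semigroup on $n$ generators is finite.

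Finally I would conclude: if $(S,\ast)\in\mathcal{V}_{P_k}$ is generated by $n$ elements, then it is a homomorphic image of $\left(\mathcal{A}_k(n),\ast\right)$, so $|S|\le nk^n+n<\infty$; since $n$ was arbitrary, $\mathcal{V}_{P_k}$ is locally finite. I do not expect any genuine obstacle, as the combinatorial substance — the explicit free algebra together with its cardinality $nk^n+n$ — is already in place; the single point worth a careful line is the identification $\mathcal{A}_kfin(X)=\mathcal{A}_k(X)\cong\mathcal{A}_k(n)$ for $|X|=n$, including that the distinguished elements $(T_x,x)$ are matched on both sides, which is routine.
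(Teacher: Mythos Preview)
Your proposal is correct and follows exactly the approach implicit in the paper: the corollary is stated there without proof, immediately after the theorem that the finite construction $(\mathcal{A}_k(n),\ast)$ lies in $\mathcal{V}_{P_k}$ with $nk^n+n$ elements, having already noted that for $|X|=n$ the free object $\mathcal{A}_kfin(X)$ reduces to $\mathcal{A}_k(n)$. Your only addition is making explicit the standard equivalence between local finiteness and finiteness of finitely generated free algebras, which the paper leaves tacit.
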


\smallskip

\begin{ex}
For $n=1$, we have $\mathcal{A}_2(1)=\{(0, 1), (1, 1), (2, 1)\}$. Then
$$\begin{tabular}{c | c c c c c}
    $\ast$ & $(0,1)$ & $(1,1)$ & $(2,1)$   \\
    \hline
   $(0,1)$& $(1,1)$ & $(2,1)$ & $(1,1)$ \\
    $(1,1)$ & $(2,1)$ & $(1,1)$ & $(2,1)$\\
    $(2,1)$& $(1,1)$ & $(2,1)$ & $(1,1)$
    \end{tabular}
 $$   
  and the  algebra $(\mathcal{A}_2(1),\ast)$ is a $P_2$-semigroup isomorphic with the first semigroup from \cref{ex:1gen}. The only idempotent element is $(1,1)$.
 \end{ex}

 \smallskip
 
 \begin{ex} 
For $n=2$, we have $\mathcal{A}_2(2)=\{(0, 0, 1), (1, 0, 1), (0, 1, 1), (1, 1, 1), (2, 0, 1), \\(0,0,2),(0, 1, 2), (1, 0, 2), (1, 1, 2), (0, 2, 2) \}$. For example, 
\begin{align*}
& (0,0,1) \, \ast \, (0,1,2)= (1, 1, 2)\quad {\rm and}\quad
(0,1,2) \, \ast\,  (0,0,1) = (0, 2, 1).
    \end{align*}
    In this case $(\mathcal{A}_2(2),\ast)$ is not commutative and $\E(\mathcal{A}_2(2),\ast)=\{(1,0,1),(0,1,2)\}$.
 \end{ex}

\smallskip

\subsection{Subvariety \texorpdfstring{$\mathcal{V}_R$}{}}
\smallskip

\begin{nota*}
    Let $\mathcal{V}_R$ be the variety of semigroups $(S,\ast)$ which satisfy the following identity:
\begin{align}\label{R}
x^2\ast z=z \tag{R}.
\end{align}
\end{nota*}

\begin{rem}
    Note that \eqref{R} implies $y\ast x^{2k}\ast z=y\ast z$, for all $x,y,z\in S$ and $k \in \mathbb{N}$. Hence $\mathcal{V}_R$ is a subvariety of $\mathcal{V}_{P_{2k}}$, for $k \in \mathbb{N}$. 
\end{rem}

\smallskip

\begin{ex}\label{ex:R}
Let $n=1$ and $\mathcal{C}_1=\{x:=(0, 1), x^2:=(1, 1)\}$. Then $(\{x,x^2\},\ast)$ with
$$\begin{tabular}{c | c c c cc}
    $\ast$ & $x$ & $x^2$    \\
    \hline
   $x$& $x^2$ & $x$  \\
    $x^2$ & $x$ & $x^2$ 
    \end{tabular}
 $$ 
 is a right-normal semigroup which satisfies the property \eqref{R}.
\end{ex}

\medskip

In general, let $X$ be a set and let us consider the set 
$\mathcal{C}(X):=\{ f\in \mathbb{Z}_2^{X}\} \times X$.
Define the binary operation on the set $\mathcal{C}(X)$ as follows: for $(f,y),(g,z)\in \mathcal{C}(X)$ set
\begin{align*}
   (f,y) \, \ast \, (g,z):=
       (S[f,g;y],z) .
\end{align*}
Straightforward calculations show that $\left(\mathcal{C}(X), \, \ast \right)$ is a  right-normal semigroup satisfying \eqref{R}. 
In particular, $(\mathcal{C}fin(X):=\{f_\alpha\in \mathcal{C}(X)\, \colon \exists _{\alpha\subseteq X} \; \alpha-\text{a finite set},\; \forall_{ x\in X\setminus \alpha}\; f(x)=0\},\ast)$ is a subsemigroup of $\left(\mathcal{C}(X), \, \ast \right)$.
Similarly as previously, we can prove the following.
\begin{theor}
For any non-empty set $X$, the algebra  $\left(\mathcal{C}fin(X), \, \ast \right)$ is free in the variety $\mathcal{V}_{R}$ over the set $\mathcal{X}$.
\end{theor}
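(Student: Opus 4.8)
The plan is to follow the blueprint of the proof of Theorem~\ref{thm:V2free}, now exploiting the stronger identity \eqref{R} in place of the $P_k$-law. Since $\left(\mathcal{C}fin(X),\ast\right)$ is a subsemigroup of $\left(\mathcal{C}(X),\ast\right)$, which was already observed to be right-normal and to satisfy \eqref{R}, it belongs to $\mathcal{V}_R$; so it remains only to show (i) that $\mathcal{X}$ generates $\left(\mathcal{C}fin(X),\ast\right)$, and (ii) that $\left(\mathcal{C}fin(X),\ast\right)$ has the universal mapping property for $\mathcal{V}_R$ over $\mathcal{X}$.

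For (i), I would first note that, since a $\mathbb{Z}_2$-valued function is determined by its support, every element of $\mathcal{C}fin(X)$ has the form $(e_\alpha,y)$ with $\alpha\subseteq X$ finite and $e_\alpha$ the indicator function of $\alpha$. A short computation with $S[-,-;-]$ then gives, for pairwise distinct $x_1,\dots,x_m,y$,
\[
(\mathbf{0},x_1)\ast\cdots\ast(\mathbf{0},x_m)\ast(\mathbf{0},y)=(e_{\{x_1,\dots,x_m\}},y),
\]
which settles all $(e_\alpha,y)$ with $y\notin\alpha$, together with $(e_{\alpha\setminus\{y\}},y)\ast(\mathbf{0},y)=(e_\alpha,y)$, which then reaches the remaining case $y\in\alpha$ (note $y\notin\alpha\setminus\{y\}$, so the left factor is of the previous type). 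Hence $\mathcal{X}$ generates $\left(\mathcal{C}fin(X),\ast\right)$.

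For (ii), given $(S,\ast)\in\mathcal{V}_R$ and $h\colon\mathcal{X}\to S$ with $h((\mathbf{0},x))=s_x$, I would define $\overline{h}$ exactly as in Theorem~\ref{thm:V2free}: $\overline{h}((\mathbf{0},y))=s_y$ and $\overline{h}((f_\alpha,y))=\prod_{x\in\alpha}s_x^{f_\alpha(x)}\ast s_y$ for $f_\alpha\neq\mathbf{0}$; the trailing factor $\ast s_y$ is always present, so no empty product arises, and right-normality makes the order of the remaining factors irrelevant. To prove $\overline{h}$ is a homomorphism I would expand $\overline{h}((f_\alpha,y))\ast\overline{h}((g_\beta,z))$, use right-normality to move $s_z$ to the end and to gather equal factors, and observe that the generator $s_x$ then occurs $f_\alpha(x)+g_\beta(x)$ times if $x\neq y$ and $f_\alpha(y)+g_\beta(y)+1$ times if $x=y$. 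By associativity together with \eqref{R} in the forms $s_x^2\ast z=z$ and $a\ast s_x^2\ast z=a\ast z$, every factor of even multiplicity may be discarded; since the surviving parity of $s_x$ equals $S[f_\alpha,g_\beta;y](x)$, what is left is $\overline{h}\big((S[f_\alpha,g_\beta;y],z)\big)=\overline{h}\big((f_\alpha,y)\ast(g_\beta,z)\big)$. Uniqueness of $\overline{h}$ is then immediate from (i), and $\overline{h}$ restricts to $h$ by construction, so $\left(\mathcal{C}fin(X),\ast\right)$ is free in $\mathcal{V}_R$ over $\mathcal{X}$.

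The step I expect to need the most care is the cancellation bookkeeping in (ii), and especially the degenerate case $S[f_\alpha,g_\beta;y]=\mathbf{0}$, in which every non-final factor disappears: there the right-hand side is $\overline{h}((\mathbf{0},z))=s_z$, and it is precisely \eqref{R}, applied repeatedly through $s_x^2\ast(\mathrm{rest}\ast s_z)=\mathrm{rest}\ast s_z$, that collapses the left-hand product down to $s_z$. This plays the role that Lemma~\ref{lemmaP} (the value $s_z^{k+1}$) played in the ``otherwise'' branch of the proof of Theorem~\ref{thm:V2free}; here that branch degenerates entirely thanks to \eqref{R}. A minor additional point to watch is keeping the separately-defined value $\overline{h}((\mathbf{0},y))=s_y$ consistent with the product formula throughout the case analysis, which is automatic once one records that the factor $\ast s_y$ is never missing.
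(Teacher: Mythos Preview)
Your proposal is correct and follows exactly the approach the paper intends: the paper's own proof of this theorem is just the sentence ``Similarly as previously, we can prove the following,'' referring back to the proof of Theorem~\ref{thm:V2free}, and your write-up is a faithful (and more detailed) instantiation of that template with the $T_z$-branch eliminated by \eqref{R}. The only cosmetic point worth tightening is to read the product $\prod_{x\in\alpha}s_x^{f_\alpha(x)}$ as ranging over the support of $f_\alpha$, so that no $s_x^0$ appears.
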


If $|X|=n \in \mathbb{N}$ set
$\mathcal{C}_n:=\mathbb{Z}_2^n\times [n]$ 
and for $(a_1, \dots, a_n, k), (b_1, \dots, b_n, m)\in \mathcal{C}_n$, let us define the following binary operation: 
\begin{align*}
   (a_1, \dots, a_n, k) \, \ast \, (b_1, \dots, b_n, m):=\left(a_1+_2 b_1 \dots, a_k+_2b_k+_2 1, \dots, a_n+_2b_n, m\right). 
\end{align*}

\begin{theor}
   $\left(\mathcal{C}_n, \, \ast \right)$ has  $n2^n$-elements and is a 
  $\mathcal{V}_R$-semigroup.
\end{theor}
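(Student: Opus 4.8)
The plan is to verify directly that the announced operation on $\mathcal{C}_n$ is a well-defined instance of the general construction $\left(\mathcal{C}fin(X),\ast\right)$ when $X=[n]$, and then transport all structural properties across that identification. First I would note that $\mathcal{C}_n=\mathbb{Z}_2^n\times[n]$ is literally $\mathcal{C}([n])=\{f\in\mathbb{Z}_2^{[n]}\}\times[n]$, identifying a tuple $(a_1,\dots,a_n)$ with the function $i\mapsto a_i$, and that since $[n]$ is finite we have $\mathcal{C}([n])=\mathcal{C}fin([n])$. Under this identification, for $(a_1,\dots,a_n,k)$ and $(b_1,\dots,b_n,m)$ the function $S[f,g;k]$ sends $i\neq k$ to $a_i+_2 b_i$ and $k$ to $a_k+_2 b_k+_2 1$, so $\left(S[f,g;k],m\right)$ is exactly the tuple $\left(a_1+_2 b_1,\dots,a_k+_2 b_k+_2 1,\dots,a_n+_2 b_n,m\right)$ displayed in the statement. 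Hence the two semigroups coincide, not merely are isomorphic.

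Given that coincidence, associativity, the right-normal law, and identity \eqref{R} all follow immediately from the already-established fact that $\left(\mathcal{C}(X),\ast\right)$ is a right-normal semigroup satisfying \eqref{R} (stated just before the free-semigroup theorem), applied with $X=[n]$; equivalently they follow from the preceding theorem that $\left(\mathcal{C}fin(X),\ast\right)$ is free in $\mathcal{V}_R$ over $\mathcal{X}$, since a free algebra in a variety lies in that variety. So $\left(\mathcal{C}_n,\ast\right)$ is a $\mathcal{V}_R$-semigroup. For the cardinality count, I would simply observe that an element of $\mathcal{C}_n$ is a choice of an $n$-tuple over $\mathbb{Z}_2$, giving $2^n$ possibilities, together with an index $l\in[n]$, giving $n$ possibilities, and these choices are independent, so $|\mathcal{C}_n|=n2^n$; the map $(a_1,\dots,a_n,l)\mapsto(f,l)$ with $f(i)=a_i$ is a bijection onto $\mathcal{C}([n])$, so no elements are lost or identified.

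The only point requiring a word of care — and the closest thing to an obstacle — is checking that the two formulas for the product genuinely agree in the degenerate situation that drives the $T_{x}$-branch in $\mathcal{A}_k(X)$: but here $\mathcal{C}(X)$ was defined with the \emph{unconditional} rule $(f,y)\ast(g,z):=\left(S[f,g;y],z\right)$, with no alternative branch, precisely because over $\mathbb{Z}_2$ (or more generally in the $\mathcal{C}$-construction) the values $T_{x}$ are not needed; the displayed formula for $\mathcal{C}_n$ is likewise unconditional. So there is nothing to reconcile, and the verification is routine. For completeness one may also record that $\mathcal{X}=\{({\bf 0},i)\mid i\in[n]\}$ — in tuple notation $\{(0,\dots,0,i)\mid i\in[n]\}$ — is a free generating set of size $n$, recovering the previous theorem in the finite case, though this is not needed for the stated cardinality and membership claim.
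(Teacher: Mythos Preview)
Your proposal is correct and matches the paper's approach: the paper states this theorem without proof, treating it as an immediate specialization of the general construction $\left(\mathcal{C}(X),\ast\right)$ (already shown to be a right-normal semigroup satisfying \eqref{R}) to the finite set $X=[n]$, and your argument spells out precisely that identification together with the obvious cardinality count.
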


\smallskip

\section{Left-trivially distributive APAs}
In this section, we introduce and study a class of algebras to which some APAs belong. In particular, we show that for such APAs, $(S, \ast)$ necessarily belongs to the variety of $P_{2k}$-semigroups. Moreover, we introduce the transformation semigroup of left translations associated with an APA and show how some properties of the APA may be reflected in its semigroup and vice versa.

\begin{defin}
   An algebra $(S,\cdot,\ast)$ is called \emph{left-trivially distributive} if, for $x,y,z\in S$, the following law holds: \begin{align}\label{LTD}\tag{LTD}
        (x\cdot y)\ast z=x\ast z.
    \end{align}
    \end{defin}

\begin{ex}
    Let $(S, \cdot)$ be a left-zero semigroup and $(S, \ast)$ be a magma. Then the algebra $(S, \cdot, \ast)$ is left-trivially distributive. 
\end{ex}

\smallskip

      Recall that for an APA $(S,\cdot,\ast)$, for each $x\in X$, $\theta_x$ is a left translation by $x$ with respect to the operation $\ast$. 
      It directly follows from condition \eqref{assoc} that the set 
      $$T(S):=\{\theta_x \, \mid \, x \in S\}$$ 
     is a subsemigroup of the transformation monoid $S^S$.
     \begin{defin}
          We call $(T(S),\circ)$ the \emph{transformation semigroup of left translations associated with} an APA $(S, \cdot, \ast)$.
     \end{defin}     

\smallskip

\begin{rem}
Let $(T(S),\circ)$ be the transformation semigroup associated with an APA $(S, \cdot, \ast)$. Then $(T(S),\circ)$ is commutative if and only if the semigroup $(S,\ast)$ is right-normal.
\end{rem}
      






\smallskip
  
\begin{lemma} \label{ts_comm}
 Let $(S,\cdot,\ast)$ be a left-trivially distributive APA.
Then $T(S)\subseteq \End{(S,\cdot)}$ and $(T(S),\circ)$ is commutative.
 \begin{proof}
 Clearly, condition \eqref{LTD} is equivalent to $\theta_{x\cdot y}=\theta_x$, for $x,y, \in S$. Thus, by \eqref{p_one}, this means that $\theta_x$ is an endomorphism of $(S, \cdot)$.
  In addition, by \eqref{rel_4}, for any $x,y\in S$, we have $\theta_x\theta_y=\theta_{x \cdot x}\theta_y\underset{\eqref{rel_4}}{=}\theta_x\theta_y\theta_x\theta_x=\theta_x\theta_y\theta_{x \cdot y}\theta_x\underset{\eqref{rel}}{=}\theta_y\theta_x$. Therefore, $(T(S),\circ)$ is commutative.
 \end{proof}
   \end{lemma}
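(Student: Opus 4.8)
The plan is to argue entirely in the operator language provided by the \eqref{assoc} notation. The first step is to record that \eqref{LTD} is nothing but the identity $\theta_{x\cdot y}=\theta_x$ for all $x,y\in S$: two left translations of $(S,\ast)$ coincide precisely when they agree on every argument, and $(x\cdot y)\ast z=x\ast z$ says $\theta_{x\cdot y}(z)=\theta_x(z)$ for all $z$. Everything below is a manipulation of this equation against the structural identities \eqref{p_one}, \eqref{rel} and \eqref{rel_4}.

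For the inclusion $T(S)\subseteq\End(S,\cdot)$ I would simply substitute $\theta_{x\cdot y}=\theta_x$ into \eqref{p_one}. That identity reads $\theta_x(y)\cdot\theta_{x\cdot y}(z)=\theta_x(y\cdot z)$, and under the substitution it collapses to $\theta_x(y)\cdot\theta_x(z)=\theta_x(y\cdot z)$ for all $y,z\in S$, which is exactly the assertion that each $\theta_x$ is an endomorphism of $(S,\cdot)$. This part is immediate and needs no further input.

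For commutativity of $(T(S),\circ)$ the key tool is \eqref{rel_4}, namely $\theta_{x\cdot y}\theta_z=\theta_y\theta_z\theta_x\theta_{y\cdot z}$. The idea is to inflate the product $\theta_x\theta_y$ to one on which \eqref{rel_4} bites: using $\theta_x=\theta_{x\cdot x}$ and then \eqref{rel_4} with the variables specialised to $x,x,y$, followed by \eqref{LTD} on the tail factor, one obtains the chain $\theta_x\theta_y=\theta_{x\cdot x}\theta_y=\theta_x\theta_y\theta_x\theta_x=\theta_x\theta_y\theta_{x\cdot y}\theta_x$, where in the last step \eqref{LTD} is used again, now in the direction $\theta_x=\theta_{x\cdot y}$, to put the first three factors into the exact shape of the right-hand side of \eqref{rel}. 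Finally \eqref{rel} in the form $\theta_y=\theta_x\theta_y\theta_{x\cdot y}$ rewrites $\theta_x\theta_y\theta_{x\cdot y}$ as $\theta_y$, leaving $\theta_y\theta_x$, which gives $\theta_x\theta_y=\theta_y\theta_x$.

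I expect the only genuine subtlety to lie in the commutativity step: one has to notice that \eqref{rel_4} is useless on $\theta_x\theta_y$ as it stands and becomes applicable only after the artificial replacement $\theta_x=\theta_{x\cdot x}$, and then that \eqref{LTD} must be invoked a \emph{second} time in the opposite direction so that the resulting word matches \eqref{rel} verbatim. Once this sequence of substitutions is chosen, the verification is a short string of equalities with no real computation, so there is no serious obstacle beyond arranging the bookkeeping.
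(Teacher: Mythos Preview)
Your proposal is correct and follows essentially the same approach as the paper: both proofs reduce \eqref{LTD} to $\theta_{x\cdot y}=\theta_x$, read off the endomorphism property from \eqref{p_one}, and establish commutativity via the identical chain $\theta_x\theta_y=\theta_{x\cdot x}\theta_y=\theta_x\theta_y\theta_x\theta_x=\theta_x\theta_y\theta_{x\cdot y}\theta_x=\theta_y\theta_x$ using \eqref{rel_4} and \eqref{rel}. Your commentary even makes explicit the two separate uses of \eqref{LTD} that the paper's displayed chain leaves implicit.
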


\begin{rem}
  Recall that a solution $(S, s)$ is called \emph{commutative} if $s_{12}s_{13}=s_{13}s_{12}$; \emph{co-commutative}  if $s_{23}s_{13}=s_{13}s_{23}$ (see \cite[Definition 6]{CaMaSt20}). It is an easy computation to check that, writing the solution $s$ as usual, then $(S, s)$ is commutative if and only if, 
  $(S, \cdot, \ast)$ is left-trivially distributive and $(S, \cdot)$ is a left-normal semigroup.  Instead, $(S, s)$ is cocommutative if and only if, for $x,y,z \in S$, $x \cdot y=x \cdot\theta_z(y)$ and $\theta_x\theta_y=\theta_y\theta_x.$

Hence, if $(S,s)=(S,\cdot, \ast)$ is a co-commutative APA, then $(S, \ast)$ is a right-normal semigroup. Moreover, by \cref{ts_comm} 
, if $(S,s)=(S, \cdot, \ast)$ is commutative, then $(S, \cdot)$ is left-normal, while $(S, \ast)$ is right-normal. 
\end{rem}

   \begin{ex}
  Let $(S,\cdot,\ast)$ be a commutative or co-commutative APA. Then its transformation semigroup $(T(S),\circ)$ is commutative. 
   \end{ex}

   \smallskip

   Recall that a semigroup $(X,\ast)$ is cancellative if, for all $x,y,x\in X$, $ x\ast y=x\ast z$ implies $x=y$ and $y\ast x=z\ast x$ also implies $y=z$. 
\begin{lemma}\label{lm:cancel}
 Let $(S,\cdot,\ast)$ be an APA such that the transformation semigroup $(T(S),\circ)$ is cancellative. Then $(S,\cdot,\ast)$ is left-trivially distributive.
 \begin{proof}
Let $x, y, z \in S$.  By \eqref{rel_4}, 
we have:
  $\theta_y\theta_x\theta_x\theta_{y\cdot x}\underset{\eqref{rel_4}}{=}\theta_{x\cdot y}\theta_x\underset{\eqref{rel}}{=}\theta_{x\cdot y}\theta_y\theta_x\theta_{y\cdot x}$,
   which implies
\begin{align}
\theta_y\theta_x=\theta_{x\cdot y}\theta_y.\label{eq:xyyyx}
\end{align}  
Now, by cancellativity and again by \eqref{rel_4}, 
we get:
  \begin{align*}
\theta_y\theta_y\theta_x\theta_{y\cdot y}\underset{\eqref{rel_4}}{=}\theta_{x\cdot y}\theta_y\underset{\eqref{eq:xyyyx}}{=}\theta_y\theta_x\underset{\eqref{rel}}{=}\theta_y\theta_y\theta_x\theta_{y \cdot x}\quad \Rightarrow\quad \theta_{y\cdot x}=\theta_{y\cdot y}.
 \end{align*}
 Hence, by \eqref{eq:xyyyx} for $y=x$,  and by cancellativity we obtain: 
\begin{align*}
    \theta_{x\cdot x}\theta_x=\theta_x\theta_x\quad \Rightarrow\quad \theta_x=\theta_{x\cdot x}=\theta_{x\cdot y}.
\end{align*}
  \end{proof}
   \end{lemma}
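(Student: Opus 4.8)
The plan is to work throughout inside the transformation semigroup $(T(S),\circ)$, using only the identities \eqref{rel} and \eqref{rel_4} already established for an APA, together with the hypothesis of cancellativity. As noted at the start of the proof of \cref{ts_comm}, the law \eqref{LTD} is equivalent to the requirement that $\theta_{x\cdot y}=\theta_x$ for all $x,y\in S$, so it suffices to prove this equality of transformations.

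First I would extract a preliminary identity. Specialising \eqref{rel_4} to $z=x$ gives $\theta_{x\cdot y}\theta_x=\theta_y\theta_x\theta_x\theta_{y\cdot x}$. Applying \eqref{rel} to the factor $\theta_x$ on the left-hand side (in the shape $\theta_x=\theta_y\theta_x\theta_{y\cdot x}$) rewrites that same element as $\theta_{x\cdot y}\theta_y\theta_x\theta_{y\cdot x}$. Comparing the two and cancelling the common right factor $\theta_x\theta_{y\cdot x}$ — this is where right cancellativity of $(T(S),\circ)$ enters — yields
\[
\theta_y\theta_x=\theta_{x\cdot y}\theta_y\qquad\text{for all } x,y\in S.
\]

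With that in hand, the rest is a short chain of substitutions. Feeding the identity back into \eqref{rel_4} with $z=y$, one has $\theta_{x\cdot y}\theta_y=\theta_y\theta_y\theta_x\theta_{y\cdot y}$, while the preliminary identity followed by \eqref{rel} rewrites $\theta_{x\cdot y}\theta_y=\theta_y\theta_x=\theta_y\theta_y\theta_x\theta_{y\cdot x}$; cancelling the common left factor $\theta_y\theta_y\theta_x$ gives $\theta_{y\cdot x}=\theta_{y\cdot y}$ for all $x,y$. Finally, putting $y=x$ in the preliminary identity gives $\theta_x\theta_x=\theta_{x\cdot x}\theta_x$, hence $\theta_x=\theta_{x\cdot x}$ by right cancellativity; combined with $\theta_{x\cdot y}=\theta_{x\cdot x}$ (the previous equality with the variables renamed), this gives $\theta_{x\cdot y}=\theta_x$, which is \eqref{LTD}.

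The only real obstacle is the bookkeeping in the first step: one must pick the correct specialisation of \eqref{rel_4}, recognise that \eqref{rel} has to be applied to one particular inner factor, and then verify that the two sides genuinely share a cancellable common right factor. After the preliminary identity $\theta_y\theta_x=\theta_{x\cdot y}\theta_y$ is secured, everything else is essentially forced. It is also worth stressing that cancellativity is used here as an abstract property of the semigroup $(T(S),\circ)$, not as injectivity of the individual maps $\theta_x$ on $S$ — which is precisely what the hypothesis supplies.
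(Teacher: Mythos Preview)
Your proof is correct and follows essentially the same route as the paper: the same specialisation of \eqref{rel_4} at $z=x$ combined with \eqref{rel} and right cancellation to obtain $\theta_y\theta_x=\theta_{x\cdot y}\theta_y$, then the specialisation at $z=y$ and left cancellation to get $\theta_{y\cdot x}=\theta_{y\cdot y}$, and finally the case $y=x$ to conclude. The exposition and the order of deductions match the paper's proof almost step for step.
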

   
\smallskip

As a consequence of \cref{ts_comm} and \cref{lm:cancel}, we have the following.
\begin{cor}\label{cor_can}
Let $(S,\cdot,\ast)$ be an APA. Then each cancellative transformation semigroup $(T(S),\circ)$ is commutative.       
   \end{cor}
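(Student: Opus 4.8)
The statement to prove is Corollary~\ref{cor_can}: if $(S,\cdot,\ast)$ is an APA whose transformation semigroup $(T(S),\circ)$ is cancellative, then $(T(S),\circ)$ is commutative. The plan is to simply chain the two preceding results. \cref{lm:cancel} tells us that cancellativity of $(T(S),\circ)$ forces $(S,\cdot,\ast)$ to be left-trivially distributive, i.e.\ to satisfy \eqref{LTD}. Then \cref{ts_comm} tells us that for any left-trivially distributive APA the transformation semigroup $(T(S),\circ)$ is commutative (indeed it additionally lies in $\End(S,\cdot)$, though we only need commutativity here).

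So the argument is: assume $(T(S),\circ)$ is cancellative. By \cref{lm:cancel}, $(S,\cdot,\ast)$ is left-trivially distributive. By \cref{ts_comm}, $(T(S),\circ)$ is commutative. That is all.

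There is no real obstacle; this is a two-line deduction from results already established. The only thing to be careful about is making sure the hypotheses line up exactly — \cref{lm:cancel} requires an APA with cancellative $(T(S),\circ)$, which is precisely our hypothesis, and its conclusion (left-trivial distributivity) is exactly the hypothesis needed to invoke \cref{ts_comm}.

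\begin{proof}
Let $(S,\cdot,\ast)$ be an APA and suppose that $(T(S),\circ)$ is cancellative. By \cref{lm:cancel}, $(S,\cdot,\ast)$ is left-trivially distributive. Hence, by \cref{ts_comm}, the transformation semigroup $(T(S),\circ)$ is commutative.
\end{proof}
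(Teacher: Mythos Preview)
Your proof is correct and follows exactly the paper's own approach, which simply states the corollary as a consequence of \cref{ts_comm} and \cref{lm:cancel}.
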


\begin{ex}\label{expk}
The transformation semigroup of any APA $(S,\cdot,\ast)$ with $(S,\ast)\in \mathcal{V}_{P_k}$ is commutative and cancellative. Indeed, by \eqref{P} for $x,y,z\in S$
\begin{align*}
  \theta_x\theta_z=\theta_y\theta_z\quad \Rightarrow\quad   \theta_x\theta_z^k=\theta_y\theta_z^k\quad \Rightarrow\quad \theta_x=\theta_y,
\end{align*}
hence $(T(S),\circ)$ is cancellative. Moreover, by \cref{cor_can}, $(T(S),\circ)$ is also commutative. 
\end{ex}

\smallskip

Clearly, by \cref{lm:cancel},  any APA $(S,\cdot,\ast)$ with $(S,\ast)\in \mathcal{V}_{P_k}$ is left-trivially distributive. Note that we can reverse this statement for specific $k$. 

\begin{theor}\label{thm:LTDAPA}
An APA $(S,\cdot,\ast)$ is left-trivially distributive if and only if the semigroup $(S,\ast)\in \mathcal{V}_{P_{2n}}$, with $n \in \mathbb{N}$.  
\end{theor}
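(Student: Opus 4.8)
The plan is to treat the two implications separately, and to observe that the ``if'' direction is already essentially available. Indeed, if $(S,\ast)\in\mathcal{V}_{P_{2n}}$ for some $n\in\mathbb{N}$, i.e.\ $(S,\ast)\in\mathcal{V}_{P_k}$ with $k=2n$, then by Example~\ref{expk} the transformation semigroup $(T(S),\circ)$ is cancellative, and \cref{lm:cancel} yields at once that $(S,\cdot,\ast)$ is left-trivially distributive. So the content of the theorem is the forward implication.

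For the forward implication, assume $(S,\cdot,\ast)$ is LTD. First I would recall, as noted in the proof of \cref{ts_comm}, that \eqref{LTD} is equivalent to $\theta_{y\cdot x}=\theta_y$ for all $x,y\in S$; and that, by \cref{ts_comm}, $(T(S),\circ)$ is commutative, hence by the remark above the semigroup $(S,\ast)$ is right-normal. It remains to establish the identity $(P_2)$. Starting from \eqref{rel}, $\theta_x=\theta_y\theta_x\theta_{y\cdot x}$, and substituting $\theta_{y\cdot x}=\theta_y$, I obtain $\theta_x=\theta_y\theta_x\theta_y$, which by commutativity of $(T(S),\circ)$ rearranges to
\[
\theta_x=\theta_x\,\theta_y^{\,2}\qquad\text{for all }x,y\in S.
\]

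Finally I would evaluate this identity of maps at an arbitrary $z\in S$. Since $\theta_y^{\,2}(z)=\theta_y(\theta_y(z))=y\ast(y\ast z)=y^2\ast z$ by associativity of $\ast$, the displayed equality gives $x\ast z=\theta_x(z)=\theta_x(y^2\ast z)=x\ast y^2\ast z$, which is precisely $(P_2)$. Thus $(S,\ast)$ is a right-normal semigroup satisfying $(P_2)$, so $(S,\ast)\in\mathcal{V}_{P_2}$; since $\mathcal{V}_{P_2}\subseteq\mathcal{V}_{P_{2n}}$ for every $n\in\mathbb{N}$ by the Remarks on $P_k$-semigroups, the claim follows, in fact already with $n=1$. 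There is no real obstacle here; the only point needing care is that membership in $\mathcal{V}_{P_k}$ is defined to include right-normality, so one must extract right-normality of $(S,\ast)$ from commutativity of $(T(S),\circ)$ as a separate step rather than trying to read it off from $(P_2)$, and one should note that the ``$\exists\,n$'' in the statement is harmless because the forward direction actually lands in the smallest class $\mathcal{V}_{P_2}$.
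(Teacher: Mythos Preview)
Your proposal is correct and follows essentially the same route as the paper: both directions use \cref{ts_comm}, \eqref{rel}, \cref{lm:cancel}, and \cref{expk} in the same way to obtain $\theta_x=\theta_x\theta_y^{2}$ and hence $(P_2)$. If anything, you are slightly more explicit than the paper in isolating right-normality of $(S,\ast)$ as a separate consequence of commutativity of $(T(S),\circ)$, which is indeed needed for membership in $\mathcal{V}_{P_{2n}}$ as defined.
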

\begin{proof}
    First, assume that $(S,\cdot,\ast)$ is LTD. By \cref{ts_comm}, the transformation semigroup $ (T(S),\circ)$ is commutative.  So, by \eqref{rel}, for $x,y\in S$ we have:
      $\theta_x=\theta_y\theta_x\theta_{y\cdot x}=\theta_y\theta_x\theta_{y}=\theta_x\theta_y^2.$
  Thus, by induction arguments, $\theta_x=(\theta_x\theta_y^2)\theta_y^2=\ldots =\theta_x\theta_y^{2n}$, with $n \in \mathbb{N}$. Therefore, $(S,\ast)\in \mathcal{V}_{P_{2n}}$.
    The converse follows from \cref{lm:cancel} together with \cref{expk}.
\end{proof}
 
 \smallskip

\begin{prop}\label{prop:APAV2} 
   Let $(S, \cdot, \ast)$ be a LTD algebra such that $(S,\cdot)$ and $(S,\ast)$ both are semigroups. Then $(S,\cdot,\ast)$ is an APA if and only if $(S,\ast)\in \mathcal{V}_{P_2}$ and $T(S)\subseteq \End{(S,\cdot)}$.
    \begin{proof}
Let $(S,\ast)\in \mathcal{V}_{P_2}$ and $T(S)\subseteq \End{(S,\cdot)}$ and let $x, y, z \in S$. Then, the equation \eqref{p_one} follows by 
    $\left( x \ast y \right) \cdot \left(\left(x \cdot y\right) \ast z\right) = \left( x \ast y \right) \cdot \left(x \ast z\right) =x\ast \left(y \cdot z\right)$.          
Moreover, since $(S, \ast)$ is right-normal, we have
     $x \ast y \ast \left(x \cdot y\right) \ast z=  x \ast y  \ast x \ast z=y \ast x^2  \ast z =y \ast z$.
Hence, equation \eqref{p_two} is also satisfied. 
The converse is true by \cref{thm:LTDAPA} and \cref{ts_comm}.
    \end{proof}
\end{prop}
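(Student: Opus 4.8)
The plan is to treat the two implications separately, noting first that associativity of both $\cdot$ and $\ast$ is assumed throughout; so in the direction ``$\Leftarrow$'' only the pentagon identities \eqref{p_one} and \eqref{p_two} remain to be verified, and in the direction ``$\Rightarrow$'' only the membership $(S,\ast)\in\mathcal{V}_{P_2}$ and the inclusion $T(S)\subseteq\End(S,\cdot)$ need to be checked. A convenient observation to record at the outset is that the two extra hypotheses enter at disjoint points: the inclusion $T(S)\subseteq\End(S,\cdot)$ is exactly what forces \eqref{p_one}, whereas right-normality together with the law \eqref{P} for $k=2$ (both subsumed in the requirement $(S,\ast)\in\mathcal{V}_{P_2}$) is exactly what forces \eqref{p_two}.

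For ``$\Leftarrow$'', assume $(S,\ast)\in\mathcal{V}_{P_2}$ and $T(S)\subseteq\End(S,\cdot)$, and fix $x,y,z\in S$. I would first use \eqref{LTD}, in the form $\theta_{x\cdot y}=\theta_x$, to replace $(x\cdot y)\ast z$ by $x\ast z$ wherever it occurs. Then \eqref{p_one} becomes $(x\ast y)\cdot(x\ast z)=x\ast(y\cdot z)$, i.e. $\theta_x(y)\cdot\theta_x(z)=\theta_x(y\cdot z)$, which holds precisely because $\theta_x$ is an endomorphism of $(S,\cdot)$. For \eqref{p_two}, the same substitution rewrites the left-hand side as $(x\ast y)\ast(x\ast z)=x\ast y\ast x\ast z$; right-normality moves the second occurrence of $x$ past $y$ to give $y\ast x\ast x\ast z=y\ast x^2\ast z$, and then \eqref{P} with $k=2$ collapses this to $y\ast z$. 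Hence $(S,\cdot,\ast)$ satisfies (I) and (II), so it is a pentagon algebra, and since $\ast$ is associative it is an APA.

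For ``$\Rightarrow$'', assume $(S,\cdot,\ast)$ is an APA; it is LTD by hypothesis. Then \cref{ts_comm} yields at once $T(S)\subseteq\End(S,\cdot)$ together with commutativity of $(T(S),\circ)$, and commutativity of $(T(S),\circ)$ is equivalent to right-normality of $(S,\ast)$. It remains to see that $(S,\ast)$ satisfies \eqref{P} with $k=2$, and this is what \cref{thm:LTDAPA} provides (already in the case $n=1$): the inductive argument there produces $\theta_x=\theta_x\theta_y^{2}$ for all $x,y\in S$, which unwinds to $x\ast z=x\ast y^2\ast z$. Combining these, $(S,\ast)\in\mathcal{V}_{P_2}$, as wanted.

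I do not anticipate a genuine obstacle here: the proof is mostly bookkeeping, feeding \cref{ts_comm} and \cref{thm:LTDAPA} into the definition of a pentagon algebra. The one spot that needs a little attention is the chain of rewrites verifying \eqref{p_two}: one must apply the right-normal law to the correct adjacent pair of factors (bringing the repeated generator $x$ next to itself) before \eqref{P} becomes applicable, and one should keep track of the fact that membership in $\mathcal{V}_{P_2}$ is being used twice over — once for right-normality and once for \eqref{P} — at two different steps of the verification.
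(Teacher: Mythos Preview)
Your proposal is correct and follows essentially the same route as the paper's proof: in the ``$\Leftarrow$'' direction you use \eqref{LTD} to reduce \eqref{p_one} to the endomorphism property of $\theta_x$ and \eqref{p_two} to the chain right-normality\,$+$\,\eqref{P}, and in the ``$\Rightarrow$'' direction you invoke \cref{ts_comm} and \cref{thm:LTDAPA} exactly as the paper does. The only difference is expository---you spell out explicitly that $\mathcal{V}_{P_2}$ contributes right-normality and the $P_2$-identity at separate steps, and that the $n=1$ case of \cref{thm:LTDAPA} already suffices.
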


\smallskip

\subsection{Construction of left-trivially distributive APAs}
Now we describe a general method on how to obtain an APA from any semigroup $(S,\ast)\in \mathcal{V}_{P_k}$.

\medskip

\begin{lemma}\label{lm:form}
    Let $(S,\cdot,\ast)$ be a left-trivially distributive APA generated by a set $X\subseteq S$. Let $\langle X\rangle_\ast$ be the subsemigroup of $(S,\ast)$ generated by $X$. Then for each $w\in S$ there exist $s_1,\ldots,s_p\in \langle X\rangle_\ast$ such that $w=s_1\cdot s_2\cdot \ldots\cdot s_p$.

\begin{proof}
  The proof goes by induction on the minimal number $\ell$ of occurrences of the semigroup operation $\cdot$ in the expression $w$ as a word in the alphabet $X$.

  Consider $w=s_1$ with $s_1\in \langle X\rangle_\ast$. Hence, the result holds for $\ell=0$. Now suppose that the hypothesis is established for $\ell>0$ and let $w\in S$ be an element in which the semigroup operation $\cdot$ occurs $\ell+1$ times. 

  $i$) Let $w=s_1\cdot s_2$ for some $s_1,s_2\in \langle X\rangle_\ast$. According to the induction hypothesis, there are $s_{11},\ldots,s_{1p},s_{21},\ldots,s_{2r}\in \langle X\rangle_\ast$ such that $s_1=s_{11}\cdot\ldots\cdot s_{1p}$ and $s_2=s_{21}\cdot \ldots\cdot s_{2r}$. Therefore, $w=s_1\cdot s_2=s_{11}\cdot\ldots\cdot s_{1p}\cdot s_{21}\cdot \ldots\cdot s_{2r}$.

  $ii$) Otherwise, there are $s_1,s_2\in \langle X\rangle_\ast$ such that $w=s_1\ast s_2$ and $s_1=t_1\cdot t_2$ or $s_2=t_1\cdot t_2$, for some $t_1,t_2\in \langle X\rangle_\ast$. Then, by the left-trivially distributivity, $w=s_1\ast s_2=(t_1\cdot t_2)\ast s_2=t_1\ast s_2$ or by the left-distributivity, $w=s_1\ast(t_1\cdot t_2)=(s_1\ast t_1)\cdot (s_2\ast t_2)$, which completes the induction proof.
\end{proof}
\end{lemma}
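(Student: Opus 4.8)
The plan is to isolate a single normalization fact and then push it through by induction on words in the generators. The preliminary observation is that left-trivial distributivity forces the pentagon identity \eqref{p_one} to collapse to a genuine \emph{left distributive law} of $\ast$ over $\cdot$: since \eqref{LTD} gives $(x\cdot y)\ast z=x\ast z$, identity (I) becomes $(x\ast y)\cdot(x\ast z)=x\ast(y\cdot z)$, that is,
\[
x\ast(y\cdot z)=(x\ast y)\cdot(x\ast z)\qquad(x,y,z\in S).
\]
So in a left-trivially distributive APA we have two rewriting rules: \eqref{LTD} deletes every left argument of $\ast$ except the first when that argument is a $\cdot$-product, and left distributivity distributes $\ast$ over a $\cdot$-product appearing as its right argument. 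Using associativity of $(S,\cdot)$ and of $(S,\ast)$ (equation \eqref{assoc}), both rules iterate: for $\ast$-words $u_1,\dots,u_p$ (words in $X$ using only $\ast$, hence elements of $\langle X\rangle_\ast$) and any $z\in S$,
\[
(u_1\cdot u_2\cdots u_p)\ast z=u_1\ast z,\qquad z\ast(u_1\cdot u_2\cdots u_p)=(z\ast u_1)\cdot(z\ast u_2)\cdots(z\ast u_p).
\]

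Then I would prove, by induction on a term $t$ over $X$ representing $w\in S$, that $w$ is a $\cdot$-product of $\ast$-words. If $t=x\in X$, then $t$ is itself an $\ast$-word and $p=1$. If $t=t'\cdot t''$, the inductive hypothesis gives $t'=u_1'\cdots u_p'$ and $t''=u_1''\cdots u_q''$ with all factors $\ast$-words, whence $w=u_1'\cdots u_p'\cdot u_1''\cdots u_q''$ is again such a product. If $t=t'\ast t''$, write $t'=u_1'\cdots u_p'$ and $t''=u_1''\cdots u_q''$ by induction and apply the two iterated rules:
\[
w=(u_1'\cdots u_p')\ast(u_1''\cdots u_q'')=u_1'\ast(u_1''\cdots u_q'')=(u_1'\ast u_1'')\cdot(u_1'\ast u_2'')\cdots(u_1'\ast u_q'').
\]
Each $u_1'\ast u_i''$ is an $\ast$-product of elements of $\langle X\rangle_\ast$, hence again in $\langle X\rangle_\ast$, so $w$ has the required form and the induction closes.

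The whole argument is bookkeeping; I do not anticipate a genuine obstacle. The only points that merit an explicit line are (i) deriving left distributivity from \eqref{p_one} and \eqref{LTD}, and (ii) invoking associativity of both operations so that the concatenated $\cdot$-products and the iterated uses of \eqref{LTD} and of left distributivity are unambiguous. If one prefers the phrasing of the statement, which counts the occurrences $\ell$ of $\cdot$ in a word for $w$, the same two rules give the inductive step: with $\ell=0$ as the base case, one examines the outermost operation of a word with $\ell+1$ occurrences of $\cdot$ and rewrites by \eqref{LTD} (when $\ast$ is on top with a $\cdot$-product on the left) or by left distributivity (a $\cdot$-product on the right), producing words each of whose maximal $\ast$-subwords carries at most $\ell$ occurrences of $\cdot$, so that the inductive hypothesis applies.
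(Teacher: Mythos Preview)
Your proof is correct and follows essentially the same approach as the paper: both arguments hinge on the two rewriting rules \eqref{LTD} and the left distributive law $x\ast(y\cdot z)=(x\ast y)\cdot(x\ast z)$ (which the paper has already recorded as $T(S)\subseteq\End(S,\cdot)$ in \cref{ts_comm}), and then push these through by induction on the term representing $w$. Your structural induction on terms is a cleaner framing than the paper's induction on the number of occurrences of $\cdot$, but the substance is identical.
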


\smallskip

Let $(S,\ast)\in \mathcal{V}_{P_2}$ and $(B(S),\cdot)$ be a semigroup generated by $S$. Hence, for each $x\in B(S)$ there are $x_1,\ldots,x_n\in S$ such that $x=x_1\cdot x_2\cdot\ldots\cdot x_n=\prod_{i=1}^nx_i$. For $x= \displaystyle\prod_{i=1}^nx_i$ and $y= \displaystyle\prod_{j=1}^my_j\in B(S)$, with $x_1,\ldots,x_n,y_1,\ldots,y_m\in S$, define the following binary operation:
\begin{align*}
    x\otimes y:=\prod_{j=1}^m(x_1\ast y_j).
\end{align*}
\begin{prop}
    $(B(S),\cdot,\otimes)$ is a left-trivially distributive APA.
    \begin{proof}
  Let $x=\displaystyle\prod_{i=1}^nx_i$, $y=\displaystyle\prod_{j=1}^my_j$ and $z=\displaystyle\prod_{k=1}^pz_k$, with $x_1,\ldots,x_n,y_1,\ldots,y_m,z_1,\ldots,z_p\in S$. In the first place, we will show that $(B(S),\otimes)$ is a semigroup. 
  \begin{align*}
      &L:=(x\otimes y)\otimes z=\prod_{j=1}^m(x_1\ast y_j)\otimes \prod_{k=1}^pz_k=\prod_{k=1}^p((x_1\ast y_1)\ast z_k),\quad {\rm and}\\
&R:=x\otimes(y\otimes z)=\prod_{i=1}^nx_i\otimes \prod_{k=1}^p(y_1\ast z_k)=\prod_{k=1}^p(x_1\ast (y_1\ast z_k)).
  \end{align*}
  Since the operation $\ast$ is associative, $L=R$ and $(B(S),\otimes)$ is a semigroup.
  Further, by assumption, $(S,\ast)\in \mathcal{V}_{P_2}$, then
  \begin{align*}
      x\otimes y\otimes z=\prod_{k=1}^p(x_1\ast y_1\ast z_k)=\prod_{k=1}^p(y_1\ast x_1\ast z_k)=y\otimes x\otimes z,
  \end{align*}
  which shows that $(B(S),\otimes)$ is right-normal.

  By associativity of the operation $\otimes$ and fact that $(B(S),\otimes)$ is a $P_2$-semigroup we obtain
  \begin{align*}
      x\otimes y\otimes y\otimes z=\prod_{k=1}^p(x_1\ast y_1\ast y_1\ast z_k)=\prod_{k=1}^p(x_1\ast z_k)=\prod_{i=1}^nx_i\otimes \prod_{k=1}^pz_k=x\otimes z.
  \end{align*}
  Finally, we prove that $(B(S),\cdot,\otimes)$ is left-trivially distributive and the operation $\cdot$ is left-distributive over $\otimes$. We have:
  \begin{align*}
      (x\cdot y)\otimes z&=(\prod_{i=1}^nx_i\cdot \prod_{j=1}^my_j)\otimes \prod_{k=1}^pz_k=\prod_{k=1}^p(x_1\ast z_k)=\prod_{i=1}^nx_i\otimes \prod_{k=1}^p z_k=x\otimes z,\quad {\rm and}\\
      x\otimes (y\cdot z)&=\prod_{i=1}^nx_i\otimes(\prod_{j=1}^my_j\cdot \prod_{k=1}^pz_k)=\prod_{j=1}^m(x_1\ast y_j)\cdot \prod_{k=1}^p(x_1\ast z_k)=\\
      &=(\prod_{i=1}^nx_i\otimes \prod_{j=1}^my_j)\cdot (\prod_{i=1}^nx_i\otimes \prod_{k=1}^p z_k)=(x\otimes y)\cdot (x\otimes z).
  \end{align*}
  By Proposition \ref{prop:APAV2}, $(B(S),\cdot,\otimes)$ is an APA, which finishes the proof.
    \end{proof}
\end{prop}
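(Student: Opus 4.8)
The plan is to deduce the statement from \cref{prop:APAV2}: since $(B(S),\cdot)$ is a semigroup by hypothesis, it suffices to check that $(B(S),\otimes)$ is a semigroup, that $(B(S),\otimes)\in\mathcal{V}_{P_2}$, that $(B(S),\cdot,\otimes)$ satisfies \eqref{LTD}, and that every left translation $\theta_x\colon w\mapsto x\otimes w$ is an endomorphism of $(B(S),\cdot)$, i.e.\ $T(B(S))\subseteq\End(B(S),\cdot)$. A preliminary point is that $\otimes$ must be well defined: the formula $x\otimes y=\prod_{j=1}^m(x_1\ast y_j)$ refers to the leftmost letter $x_1$ of its first argument, so one should take $(B(S),\cdot)$ to be free over $S$ (or at least to carry unique normal forms $w=\prod_i w_i$ with $w_i\in S$). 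Granting this, the two bookkeeping facts that power every verification below are: the leftmost letter of $x\cdot y$ is $x_1$, and the leftmost letter of $x\otimes y$ is $x_1\ast y_1$.

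First I would establish associativity of $\otimes$. Writing $x=\prod_{i=1}^n x_i$, $y=\prod_{j=1}^m y_j$, $z=\prod_{k=1}^p z_k$ with all letters in $S$, the leftmost-letter facts give $(x\otimes y)\otimes z=\prod_{k=1}^p\bigl((x_1\ast y_1)\ast z_k\bigr)$ and $x\otimes(y\otimes z)=\prod_{k=1}^p\bigl(x_1\ast(y_1\ast z_k)\bigr)$, and these coincide because $\ast$ is associative. Next, since $(S,\ast)$ is right-normal, $x\otimes y\otimes z=\prod_{k=1}^p(x_1\ast y_1\ast z_k)=\prod_{k=1}^p(y_1\ast x_1\ast z_k)=y\otimes x\otimes z$, so $(B(S),\otimes)$ is right-normal; and since $(S,\ast)$ satisfies $x_1\ast y_1^2\ast z_k=x_1\ast z_k$, we get $x\otimes y\otimes y\otimes z=\prod_{k=1}^p(x_1\ast y_1\ast y_1\ast z_k)=\prod_{k=1}^p(x_1\ast z_k)=x\otimes z$. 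Hence $(B(S),\otimes)$ is a right-normal $P_2$-semigroup, i.e.\ $(B(S),\otimes)\in\mathcal{V}_{P_2}$.

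It remains to check the two distributivity identities. Because the leftmost letter of $x\cdot y$ is $x_1$, we have $(x\cdot y)\otimes z=\prod_{k=1}^p(x_1\ast z_k)=x\otimes z$, which is \eqref{LTD}. And since the letter list of $y\cdot z$ is $y_1,\dots,y_m,z_1,\dots,z_p$, we get $x\otimes(y\cdot z)=\prod_{j=1}^m(x_1\ast y_j)\cdot\prod_{k=1}^p(x_1\ast z_k)=(x\otimes y)\cdot(x\otimes z)$; that is, each $\theta_x$ is a $\cdot$-endomorphism, so $T(B(S))\subseteq\End(B(S),\cdot)$. Now $(B(S),\cdot)$ and $(B(S),\otimes)$ are semigroups, $(B(S),\cdot,\otimes)$ is LTD, $(B(S),\otimes)\in\mathcal{V}_{P_2}$, and $T(B(S))\subseteq\End(B(S),\cdot)$, so \cref{prop:APAV2} gives that $(B(S),\cdot,\otimes)$ is an APA, and it is left-trivially distributive since \eqref{LTD} was verified above.

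None of these steps is hard individually: each of associativity, right-normality, the $(P_2)$-identity, \eqref{LTD}, and left-distributivity collapses to a single line once the leftmost-letter bookkeeping is in place. The only genuine obstacle is the well-definedness issue flagged in the first paragraph — one must ensure that ``the leftmost letter $x_1$'' is unambiguous, which is exactly why $(B(S),\cdot)$ is taken to be (freely) generated by $S$; after that, the proof is essentially automatic.
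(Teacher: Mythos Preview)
Your proposal is correct and follows essentially the same route as the paper: verify associativity of $\otimes$, then right-normality and the $(P_2)$-identity from the corresponding properties of $(S,\ast)$, then \eqref{LTD} and left-distributivity of $\otimes$ over $\cdot$, and conclude via \cref{prop:APAV2}. Your explicit flag about well-definedness of $\otimes$ (requiring unique normal forms $w=\prod_i w_i$, e.g.\ $(B(S),\cdot)$ free over $S$) is a point the paper leaves implicit.
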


\medskip

Let $X$ be a non-empty set, $\left(\mathcal{A}_2fin(X), \, \ast \right)$ the free semigroup in the variety $\mathcal{V}_{P_2}$ over the set $\mathcal{X}$, $\mathcal{S}$ a variety of semigroups $(S,\cdot)$, and let $F_{\mathcal{S}}\left(\mathcal{A}_2fin(X)\right)$ be the free semigroup in $\mathcal{S}$ generated by the set $\mathcal{A}_kfin(X)$. 
\begin{theor}\label{th:LTDconst}
    The algebra $(F_{\mathcal{S}}\left(\mathcal{A}_2fin(X)\right),\cdot,\otimes)$ is free in the variety of all left-trivially distributive APAs $(S,\cdot,\ast$) over the set $\mathcal{X}$, such that $(S,\cdot)\in \mathcal{S}$.
   \begin{proof}
   Let $\mathcal{SP}_{2}$ be the variety of all APAs $(S,\cdot,\ast)$ such that $(S,\ast)\in \mathcal{V}_{P_2}$ and $(S,\cdot)\in \mathcal{S}$. Let $(C,\cdot,\ast)\in \mathcal{SP}_{2}$ and $h\colon \mathcal{X}\to C$ be a mapping such that for each $x\in X$, $h(({\bf 0}, x))=c_x$. Since $(\mathcal{A}_2fin(X),\ast)$ is free in the variety $\mathcal{V}_2$, we can extend $h$ to a semigroup homomorhism $\overline{h}\colon (\mathcal{A}_2fin(X),\ast)\to (C,\ast)$. Now, since  $F_{\mathcal{S}}\left(\mathcal{A}_2fin(X)\right)$ is free in the variety $\mathcal{S}$ we can extend $\overline{h}$ to the semigroup homomorphism
$\overline{\overline{h}}\colon (F_{\mathcal{S}}(\mathcal{A}_2fin(X)),\cdot)\to (C,\cdot)$. Hence, for $a=\prod_{i=1}^na_i$ with $a_i\in \mathcal{A}_2fin(X)$, we have
$\overline{\overline{h}}(a)=\overline{\overline{h}}(\prod_{i=1}^na_i)=\prod_{i=1}^n\overline{\overline{h}}(a_i)=
\prod_{i=1}^n\overline{h}(a_i)$. 
We will show that $\overline{\overline{h}}$ is also a homomorphism of the algebras $ (\mathcal{A}_2fin(X),\otimes)$ and $(C,\ast)$.
Let $a=\prod_{i=1}^na_i$ and $b=\prod_{j=1}^mb_j$ with $a_i, b_j\in \mathcal{A}_2fin(X)$. 
\begin{align*}
\overline{\overline{h}}(a\otimes b)&=
\overline{\overline{h}}(\prod_{j=1}^m(a_1\ast b_j))=
\prod_{j=1}^m\overline{h}(a_1\ast b_j)=
\prod_{j=1}^m(\overline{h}(a_1)\ast \overline{h}(b_j))\\
&=\prod_{i=1}^ n\overline{h}(a_i)\otimes 
\prod_{j=1}^m\overline{h}(b_j)=
\overline{\overline{h}}(a)\otimes \overline{\overline{h}}(b).
\end{align*}
Moreover, let $x\in \mathcal{X}$. Then 
$\overline{\overline{h}}(({\bf 0},x))=\overline{h}(({\bf  0},x))=c_x$ which implies 
$\overline{\overline{h}}_{/\mathcal{X}}=h$ and,
completes the proof.
   \end{proof}
\end{theor}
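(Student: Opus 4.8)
The plan is to establish the universal mapping property for the algebra $(F_{\mathcal{S}}(\mathcal{A}_2fin(X)),\cdot,\otimes)$ in the variety of left-trivially distributive APAs $(S,\cdot,\ast)$ with $(S,\cdot)\in\mathcal{S}$, taken over the set $\mathcal{X}$. By the previous proposition, this algebra is indeed an LTD APA (with $(\mathcal{A}_2fin(X),\ast)\in\mathcal{V}_{P_2}$ playing the role of $(S,\ast)$, so that $(B(S),\cdot,\otimes)$ becomes the present algebra with $(S,\cdot)$ freely generated in $\mathcal{S}$); so it remains only to verify the universal property. First I would fix an arbitrary target LTD APA $(C,\cdot,\ast)$ with $(C,\cdot)\in\mathcal{S}$, a map $h\colon\mathcal{X}\to C$, $h(({\bf 0},x))=c_x$, and build the extension in two stages, exactly mirroring the two freeness constructions used to assemble the domain.

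The key steps, in order, are: (1) Since $(\mathcal{A}_2fin(X),\ast)$ is free in $\mathcal{V}_{P_2}$ over $\mathcal{X}$ (Theorem \ref{thm:V2free}) and, by \cref{thm:LTDAPA} together with \cref{ts_comm}, the semigroup $(C,\ast)$ of the LTD APA $(C,\cdot,\ast)$ lies in $\mathcal{V}_{P_{2n}}$ for some $n$ — hence satisfies the $\mathcal{V}_{P_2}$-identities after we note $\mathcal{V}_{P_2}$ actually receives a homomorphism since $(C,\ast)$ is right-normal and satisfies $(P_2)$ by \cref{thm:LTDAPA} (one must be slightly careful here: what we need is only that $(C,\ast)$ satisfies the defining identities of $\mathcal{V}_{P_2}$, which it does as $P_{2n}$-semigroups that are right-normal are $P_2$ when... — actually it is cleaner to invoke that the target is a $\mathcal{V}_{P_2}$-semigroup directly since any LTD APA's $\ast$-semigroup is right-normal and, being $P_{2n}$, one checks $P_2$ holds) — extend $h$ to a semigroup homomorphism $\overline{h}\colon(\mathcal{A}_2fin(X),\ast)\to(C,\ast)$. (2) Since $F_{\mathcal{S}}(\mathcal{A}_2fin(X))$ is free in $\mathcal{S}$ over the set $\mathcal{A}_2fin(X)$, extend $\overline{h}$ to a semigroup homomorphism $\overline{\overline{h}}\colon(F_{\mathcal{S}}(\mathcal{A}_2fin(X)),\cdot)\to(C,\cdot)$, so that $\overline{\overline{h}}(\prod_{i=1}^n a_i)=\prod_{i=1}^n\overline{h}(a_i)$ for $a_i\in\mathcal{A}_2fin(X)$. (3) Verify that $\overline{\overline{h}}$ also respects $\otimes$ versus $\ast$: writing $a=\prod_{i=1}^n a_i$, $b=\prod_{j=1}^m b_j$, one has $\overline{\overline{h}}(a\otimes b)=\overline{\overline{h}}(\prod_{j=1}^m(a_1\ast b_j))=\prod_{j=1}^m(\overline{h}(a_1)\ast\overline{h}(b_j))=(\prod_i\overline{h}(a_i))\otimes(\prod_j\overline{h}(b_j))$, where the last equality uses the formula $u\otimes v=\prod_j(u_1\ast v_j)$ in $C$ together with LTD of $(C,\cdot,\ast)$ (which collapses $\prod_i\overline{h}(a_i)\ast$ to $\overline{h}(a_1)\ast$). (4) Restriction to $\mathcal{X}$ recovers $h$, and uniqueness follows because $\mathcal{X}$ generates $F_{\mathcal{S}}(\mathcal{A}_2fin(X))$ as an LTD APA (using that $\mathcal{X}$ generates $\mathcal{A}_2fin(X)$ under $\ast$, which in turn generates $F_{\mathcal{S}}(\mathcal{A}_2fin(X))$ under $\cdot$, so every element is reachable from $\mathcal{X}$ by the two operations).

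I expect the main obstacle to be step (1): one must be sure that $(C,\ast)$ genuinely sits in $\mathcal{V}_{P_2}$, not merely in some $\mathcal{V}_{P_{2n}}$, so that the freeness of $\mathcal{A}_2fin(X)$ over $\mathcal{V}_{P_2}$ can be applied. This requires checking that a right-normal $P_{2n}$-semigroup arising as the $\ast$-part of an LTD APA actually satisfies $(P_2)$ — which is exactly the content of \cref{thm:LTDAPA}, stating that LTD APAs have $(S,\ast)\in\mathcal{V}_{P_{2n}}$, combined with the easy converse direction there that makes $\mathcal{V}_{P_2}$ the right home (indeed by \cref{expk} and \cref{lm:cancel} every APA with $(S,\ast)\in\mathcal{V}_{P_k}$ is LTD, so the natural variety to use for the free construction is $\mathcal{V}_{P_2}$, and the content of the theorem we are proving presupposes this alignment). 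The remaining steps are routine diagram-chasing with the explicit $\otimes$-formula; the verification that $\overline{\overline{h}}$ is well defined on $F_{\mathcal{S}}(\mathcal{A}_2fin(X))$ despite the non-unique representation $x=\prod_i x_i$ is automatic from $\overline{\overline{h}}$ being a $\cdot$-homomorphism, and the $\otimes$-compatibility computation is the short display shown above, so no genuine difficulty remains there.
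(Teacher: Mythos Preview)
Your proposal is correct and follows essentially the same two-stage extension argument as the paper: extend $h$ first via the freeness of $(\mathcal{A}_2fin(X),\ast)$ in $\mathcal{V}_{P_2}$, then via the freeness of $F_{\mathcal{S}}(\mathcal{A}_2fin(X))$ in $\mathcal{S}$, and finally check compatibility with $\otimes$ versus $\ast$ using LTD and $T(C)\subseteq\End(C,\cdot)$. The only difference is presentational: the paper sidesteps your hand-wringing in step~(1) by simply declaring the target variety to be $\mathcal{SP}_2$ (APAs with $(S,\ast)\in\mathcal{V}_{P_2}$ and $(S,\cdot)\in\mathcal{S}$), implicitly identifying it with the variety of LTD APAs via \cref{thm:LTDAPA}; note that the forward direction of that theorem's proof already gives $\theta_x=\theta_x\theta_y^2$, so $(C,\ast)\in\mathcal{V}_{P_2}$ outright and your worry about $\mathcal{V}_{P_{2n}}$ versus $\mathcal{V}_{P_2}$ dissolves.
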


\smallskip

 \begin{cor}
   $\left(S(\mathcal{C}(X)), \, \cdot,\, \otimes \right)$ is an APA such that $\left(S(\mathcal{C}(X)), \, \otimes \right)$ is a right-normal semigroup satisfying \eqref{R}. Moreover, the algebra $(F_{\mathcal{S}}\left(\mathcal{C}fin(X)\right),\cdot,\otimes)$ is free in the variety of all APAs $(S,\cdot,\ast$) over the set $\mathcal{X}$ such that $(S,\cdot)\in \mathcal{S}$ and $(S,\ast)\in \mathcal{V}_R$.
\end{cor}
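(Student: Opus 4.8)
The corollary is an application of the general construction of left-trivially distributive APAs out of $\mathcal{V}_{P_2}$-semigroups, specialised to the semigroups $\bigl(\mathcal{C}(X),\ast\bigr)$ and $\bigl(\mathcal{C}fin(X),\ast\bigr)$ introduced in this subsection. The plan is to reduce both assertions to results already available for $\mathcal{V}_{P_2}$ — namely the Proposition asserting that $(B(S),\cdot,\otimes)$ is a left-trivially distributive APA (together with its proof, which also records that $(B(S),\otimes)$ is right-normal) and Theorem~\ref{th:LTDconst} — and then to supply the single extra identity-check that pins the $\otimes$-reduct down to the smaller variety $\mathcal{V}_R$.

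First I would observe that $\bigl(\mathcal{C}(X),\ast\bigr)$ and $\bigl(\mathcal{C}fin(X),\ast\bigr)$ belong to $\mathcal{V}_R$, hence to $\mathcal{V}_{P_2}$: they are right-normal and satisfy \eqref{R} by construction, and \eqref{R} yields $x\ast y^2\ast z=x\ast(y^2\ast z)=x\ast z$, which is \eqref{P} for $k=2$ (this is exactly the Remark that $\mathcal{V}_R$ is a subvariety of each $\mathcal{V}_{P_{2k}}$). Applying the construction Proposition with $S:=\mathcal{C}(X)$ then gives at once that $\bigl(S(\mathcal{C}(X)),\cdot,\otimes\bigr)$ is a left-trivially distributive APA and that $\bigl(S(\mathcal{C}(X)),\otimes\bigr)$ is right-normal. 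To complete the first assertion I would verify \eqref{R} for $\otimes$ directly from the formula $x\otimes y=\prod_{j}(x_1\ast y_j)$: for $x=\prod_{i=1}^n x_i$ and $z=\prod_{k=1}^p z_k$ one gets $x\otimes x\otimes z=\prod_{k=1}^p\bigl(x_1\ast x_1\ast z_k\bigr)=\prod_{k=1}^p z_k=z$, the middle equality being \eqref{R} in $\bigl(\mathcal{C}(X),\ast\bigr)$.

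For the freeness statement I would reproduce the proof of Theorem~\ref{th:LTDconst} line for line, with $\mathcal{V}_R$ in place of $\mathcal{V}_{P_2}$ and $\mathcal{C}fin(X)$ in place of $\mathcal{A}_2fin(X)$. By the previous paragraph (applied with $\mathcal{C}fin(X)$ instead of $\mathcal{C}(X)$), $\bigl(F_{\mathcal{S}}(\mathcal{C}fin(X)),\cdot,\otimes\bigr)$ is an APA whose $\cdot$-reduct lies in $\mathcal{S}$ by construction and whose $\otimes$-reduct lies in $\mathcal{V}_R$, so it belongs to the target variety. Given an APA $(C,\cdot,\ast)$ with $(C,\cdot)\in\mathcal{S}$ and $(C,\ast)\in\mathcal{V}_R$ and a map $h\colon\mathcal{X}\to C$, I would first extend $h$ to a semigroup homomorphism $\overline{h}\colon(\mathcal{C}fin(X),\ast)\to(C,\ast)$ using that $\bigl(\mathcal{C}fin(X),\ast\bigr)$ is free in $\mathcal{V}_R$ over $\mathcal{X}$, then extend $\overline{h}$ to $\overline{\overline{h}}\colon(F_{\mathcal{S}}(\mathcal{C}fin(X)),\cdot)\to(C,\cdot)$ using freeness in $\mathcal{S}$, and finally check $\overline{\overline{h}}(a\otimes b)=\overline{\overline{h}}(a)\ast\overline{\overline{h}}(b)$ by the same chain of equalities as in Theorem~\ref{th:LTDconst}; uniqueness is clear because $\mathcal{X}$ generates $\bigl(F_{\mathcal{S}}(\mathcal{C}fin(X)),\cdot,\otimes\bigr)$.

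The only genuinely new ingredient is the verification that the $\otimes$-reduct obeys \eqref{R} rather than merely \eqref{P} with $k=2$; as the one-line computation above shows, this is immediate from the explicit description of $\otimes$, so I do not anticipate a real \emph{obstacle}. The only mild point of care is to note that it is freeness in $\mathcal{V}_R$ — not in $\mathcal{V}_{P_2}$ — that is the correct hypothesis for extending $h$ to $\overline{h}$, which is legitimate precisely because the target semigroup $(C,\ast)$ now lies in $\mathcal{V}_R$.
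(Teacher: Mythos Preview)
Your proposal is correct and matches the paper's approach: the corollary is stated without proof precisely because it is the immediate specialisation of the construction Proposition and Theorem~\ref{th:LTDconst} to $\mathcal{V}_R$, using that $(\mathcal{C}fin(X),\ast)$ is free in $\mathcal{V}_R$. The only extra step you supply, the one-line check that $(x\otimes x)\otimes z=\prod_k(x_1\ast x_1\ast z_k)=z$, is exactly the missing verification of \eqref{R} for $\otimes$, and it is correct.
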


\smallskip

\begin{ex}
 Let $(\mathcal{C}_1,\ast)=(\{x,x^2\},\ast)$ be the $\mathcal{V}_R$-semigroup from Example \ref{ex:R} and consider the free band $\left(S(\mathcal{C}_1),\cdot\right)$ generated by the set $\{x,x^2\}$. Then we have that $$S(\mathcal{C}_1)=\{x,x^2,x\cdot x^2, x^2\cdot x, x\cdot x^2\cdot x, x^2\cdot x\cdot x^2\}.$$ On the set $S(\mathcal{C}_1)$ define the following binary operation:
 $$\begin{tabular}{c | c c c c c c}
    $\otimes$ & $x$ & $x^2$ & $x\cdot x^2$  &$x^2\cdot x$& $x\cdot x^2\cdot x$& $x^2\cdot x\cdot x^2$\\
    \hline
    $x$& $x^2$ & $x$ & $x^2\cdot x$  &$x\cdot x^2$& $x^2\cdot x\cdot x^2$& $x\cdot x^2\cdot x$ \\
   $x^2$& $x$ & $x^2$ & $x\cdot x^2$  &$x^2\cdot x$& $x\cdot x^2\cdot x$& $x^2\cdot x\cdot x^2$ \\
    $x\cdot x^2$ & $x^2$ & $x$ & $x^2\cdot x$  &$x\cdot x^2$& $x^2\cdot x\cdot x^2$& $x\cdot x^2\cdot x$ \\
    $x^2\cdot x$& $x$ & $x^2$ & $x\cdot x^2$  &$x^2\cdot x$& $x\cdot x^2\cdot x$& $x^2\cdot x\cdot x^2$ \\
    $x\cdot x^2\cdot x$& $x^2$ & $x$ & $x^2\cdot x$  &$x\cdot x^2$& $x^2\cdot x\cdot x^2$& $x\cdot x^2\cdot x$ \\
    $x^2\cdot x\cdot x^2$& $x$ & $x^2$ & $x\cdot x^2$  &$x^2\cdot x$& $x\cdot x^2\cdot x$& $x^2\cdot x\cdot x^2$ \\
    \end{tabular}
 $$   
Then $\left(S(\mathcal{C}_1),\cdot,\otimes\right)$ is an APA with $\theta_{x^2}=\theta_{x^2\cdot x}=\theta_{x^2\cdot x\cdot x^2}=\id$ and $\theta_{x}=\theta_{x\cdot x^2}=\theta_{x\cdot x^2\cdot x}\neq\id$ but it is a self-inverse bijection.
 \end{ex}

\smallskip


\section{Some classes of left-trivially distributive APAs}
In this section, we provide some families of left-trivially distributive APAs.

\smallskip

 \subsection{APA \texorpdfstring{$(S,\cdot,\ast)$}{} such that \texorpdfstring{$(T(S), \circ)$}{} is a monoid}

\medskip

\begin{lemma}\label{lm:eq}
Let $(S,\cdot,\ast)$ be an APA. The following conditions are equivalent:
\begin{enumerate}
\item[(i)] There is $x\in S$ such that $\theta_x\theta_x=\id_S$.
\item[(ii)] There is $a\in S$ with $\theta_a=\id_S$.
\item[(iii)] For each $x\in S$, $\theta_x\theta_x=\id_S$.
\item[(iv)] There is $a\in S$ such that $\theta_a$ is a bijection.
\end{enumerate}
\begin{proof}
Let $s\in S$ be such that $\theta_s\theta_s=\id$. Then, by \eqref{assoc}, it follows that there is $a\in S$ such that $\theta_s\theta_s=\theta_a=\id$. By \eqref{rel_4} 
we obtain for any $y\in S$:
\begin{align*}
\theta_{a\cdot y}=\theta_{a\cdot y}\theta_a=\theta_y\theta_a\theta_{a}\theta_{y\cdot a} =
\theta_y\theta_a\theta_{y\cdot a}\underset{\eqref{rel}}{=}\theta_a=\id_S.
\end{align*}
Once again, by \eqref{rel_4}, for $y=z=a$ we obtain for any $x\in S$:
$\theta_{x\cdot a}=\theta_x\theta_{a\cdot x}=\theta_x$,
which, by the equation \eqref{rel}, implies 
$\theta_x\theta_x=\theta_x\theta_{x\cdot a}=\theta_x\theta_a\theta_{x\cdot a}=\theta_a=\id_S$,
for any $x\in S$. Finally, let $\theta_a$ be a bijection. Then 
we have
\begin{align*}
   \theta_a \underset{\eqref{rel}}{=}\theta_a\theta_a\theta_{a\cdot a}\quad \Rightarrow \quad \id_S= \theta_a\theta_{a\cdot a}\underset{\eqref{assoc}}{=}\theta_{\theta_a(a \cdot a)}\in T(S).
\end{align*}  
\end{proof}
\end{lemma}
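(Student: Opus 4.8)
The plan is to prove the four conditions equivalent by closing a short cycle of implications and letting as many of them as possible be trivial. Since $S$ is non-empty, (iii) $\Rightarrow$ (i) is immediate, and (i) $\Rightarrow$ (iv) is immediate too, because a map with $\theta_x\theta_x=\id_S$ is its own two-sided inverse and hence a bijection. So I would reduce the claim to proving (i) $\Rightarrow$ (ii), (iv) $\Rightarrow$ (ii), and (ii) $\Rightarrow$ (iii); together with the two trivial steps these connect all four conditions.

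Both implications ending in (ii) are quick applications of \eqref{assoc}. For (i) $\Rightarrow$ (ii): if $\theta_x\theta_x=\id_S$, then $\theta_x\theta_x=\theta_{\theta_x(x)}$, so $a:=\theta_x(x)$ satisfies $\theta_a=\id_S$. For (iv) $\Rightarrow$ (ii): with $\theta_a$ a bijection, \eqref{rel} specialised to $y=x=a$ reads $\theta_a=\theta_a\theta_a\theta_{a\cdot a}$; composing with $\theta_a^{-1}$ on the left gives $\id_S=\theta_a\theta_{a\cdot a}$, and by \eqref{assoc} this equals $\theta_{\theta_a(a\cdot a)}\in T(S)$, so $b:=\theta_a(a\cdot a)$ witnesses (ii).

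The real content is (ii) $\Rightarrow$ (iii). Assume $\theta_a=\id_S$ and fix $x\in S$; the target is $\theta_x\theta_x=\id_S$. I would proceed in three moves. First, show $\theta_{a\cdot y}=\id_S$ for all $y$: insert $a$ into the appropriate positions of \eqref{rel_4}, absorb the occurrences of $\theta_a=\id_S$, and use \eqref{rel} to identify the surviving product with $\theta_a$; in particular $\theta_{a\cdot a}=\id_S$. Second, use a suitably substituted instance of \eqref{rel_4} (with the last two arguments equal to $a$), together with $\theta_a=\theta_{a\cdot a}=\id_S$, to obtain $\theta_{x\cdot a}=\theta_x$ for every $x$. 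Third, combine $\theta_{x\cdot a}=\theta_x$ with a re-insertion of $\theta_a=\id_S$ and with \eqref{rel} in the form $\theta_a=\theta_x\theta_a\theta_{x\cdot a}$ to conclude $\theta_x\theta_x=\theta_x\theta_{x\cdot a}=\theta_x\theta_a\theta_{x\cdot a}=\theta_a=\id_S$.

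I expect the only delicate point to be the bookkeeping in the first two moves of (ii) $\Rightarrow$ (iii): one must choose the substitutions in \eqref{rel_4} so that $\theta_a$ lands where it can be swallowed, and then correctly recognise the leftover triple product as an instance of \eqref{rel}. The arithmetic is light; it is just easy to misplace a factor. Everything else is forced by \eqref{assoc}, \eqref{rel}, and the cancellability of a bijective transformation.
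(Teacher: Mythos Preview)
Your proposal is correct and follows essentially the same route as the paper: the same trivial implications, the same use of \eqref{assoc} for (i)$\Rightarrow$(ii) and (iv)$\Rightarrow$(ii), and the identical three-step argument for (ii)$\Rightarrow$(iii) via \eqref{rel_4} and \eqref{rel}. The only difference is presentational: you make the cycle of implications explicit, whereas the paper leaves (iii)$\Rightarrow$(i) and (i)$\Rightarrow$(iv) implicit.
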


\noindent
 Directly by \cref{lm:eq} we get the following.
\begin{cor}\label{cor:group}
If the transformation semigroup $(T(S),\circ)$ associated with an APA $(S, \cdot, \ast)$ is a monoid, then it is an elementary abelian $2$-group and $(S,\ast)\in \mathcal{V}_R$.
\end{cor}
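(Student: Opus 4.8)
The plan is to leverage Lemma \ref{lm:eq} to upgrade the monoid structure of $(T(S),\circ)$ to that of an elementary abelian $2$-group, and then translate this back to an identity on $(S,\ast)$. First I would observe that if $(T(S),\circ)$ is a monoid, its identity element is of the form $\theta_a$ for some $a\in S$, since $T(S)=\{\theta_x\mid x\in S\}$. In particular $\theta_a=\id_S$ on $T(S)$, but as $\theta_a\circ\theta_a=\theta_a=\id_S$ as a transformation, condition (ii) of Lemma \ref{lm:eq} is satisfied. Hence condition (iii) applies and $\theta_x\circ\theta_x=\id_S$ for every $x\in S$; thus every element of $T(S)$ is its own inverse, which makes $(T(S),\circ)$ a group in which every non-identity element has order $2$. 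Commutativity is then automatic for any group of exponent $2$, but it also follows directly from \cref{ts_comm} once we know (via \cref{lm:eq}, by \eqref{rel}, the APA is LTD) — actually it is cleanest to just note a group of exponent $2$ is abelian. So $(T(S),\circ)$ is an elementary abelian $2$-group.

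Next I would establish $(S,\ast)\in\mathcal V_R$, i.e.\ the identity $x^2\ast z=z$ for all $x,z\in S$. Translating into the $\theta$ notation, $x^2\ast z=\theta_{x\ast x}(z)$; by \eqref{assoc} this equals $\theta_x\theta_x(z)$, and we have just shown $\theta_x\theta_x=\id_S$, so $x^2\ast z=z$ for all $x,z\in S$. That is exactly \eqref{R}, so $(S,\ast)\in\mathcal V_R$.

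The only point requiring a little care is the very first step: the identity element of the abstract monoid $(T(S),\circ)$ is a transformation $e\in T(S)\subseteq S^S$ with $e\circ f=f\circ e=f$ for all $f\in T(S)$; one must check that such an $e$ must act as $\id_S$ on all of $S$, not merely fix the images of the maps $\theta_x$. This is immediate: $e=\theta_a$ for some $a$, and $e\circ e=e$ forces $\theta_a\theta_a=\theta_a$, and then by \eqref{rel} $\theta_a=\theta_a\theta_a\theta_{a\cdot a}=\theta_a\theta_{a\cdot a}$; but also by \eqref{assoc}, $\theta_a\theta_{a\cdot a}=\theta_{\theta_a(a\cdot a)}\in T(S)$, and since $e=\theta_a$ is a left identity on $T(S)$ we get $\theta_a=e\circ\theta_{a\cdot a}$, which in turn... — in fact the slickest route is simply to feed $e=\theta_a$ with $\theta_a\circ\theta_a=\theta_a$ into Lemma \ref{lm:eq}(i) with $x=a$, obtaining $\theta_a\theta_a=\id_S$ from (i)$\Rightarrow$(iii), whence $\theta_a=\theta_a\theta_a\circ(\theta_a\theta_a)=\id_S$ as a transformation. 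So there is no genuine obstacle; the statement follows directly by applying Lemma \ref{lm:eq} and reading off the resulting identity in $(S,\ast)$.
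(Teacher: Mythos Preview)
Your main argument (first two paragraphs) is exactly what the paper intends: once some $\theta_a=\id_S$, Lemma~\ref{lm:eq}(ii)$\Rightarrow$(iii) gives $\theta_x^2=\id_S$ for all $x$, so $(T(S),\circ)$ is a group of exponent~$2$ (hence abelian), and via \eqref{assoc} the identity $x^2\ast z=\theta_x\theta_x(z)=z$ shows $(S,\ast)\in\mathcal V_R$. That part is fine and matches the paper's one-line ``directly by Lemma~\ref{lm:eq}''.

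The problem is your third paragraph. Your ``slickest route'' is circular: Lemma~\ref{lm:eq}(i) is the hypothesis $\theta_a\theta_a=\id_S$, not $\theta_a\theta_a=\theta_a$; from the monoid identity $e=\theta_a$ you only get the latter. And the gap you flagged is not a technicality you can patch --- it is genuine. Take Example~\ref{ex:gamma} with an idempotent $\gamma\in\End(S,\cdot)$, $\gamma\neq\id_S$: then $\theta_x=\gamma$ for all $x$, so $T(S)=\{\gamma\}$ is (abstractly) a monoid, indeed a trivial group, yet $x^2\ast z=\gamma(z)\neq z$, so $(S,\ast)\notin\mathcal V_R$. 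Thus under the reading ``$T(S)$ has an identity element'' the corollary is simply false.

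The intended reading, consistent with the surrounding context and with Lemma~\ref{lm:eq}(ii), is that $(T(S),\circ)$ is a \emph{submonoid} of the transformation monoid $S^S$, i.e.\ $\id_S\in T(S)$. With that interpretation there is nothing to check: $\id_S=\theta_a$ for some $a$, condition~(ii) holds, and your first two paragraphs finish the job. So drop the third paragraph and instead state this interpretation explicitly at the outset.
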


\smallskip
\begin{theor}\label{prop_R}
Let $(S,\cdot,\ast)$ be an APA such that for some $a\in S$, $\theta_a$ is a bijection. 
 Then $(S,\cdot,\ast)$ is left-trivially distributive.
\begin{proof}
By \cref{cor:group}, $(T(S),\circ)$ is cancellative and in consequence, by \cref{lm:cancel}, it is left-trivially distributive.
\end{proof}
\end{theor}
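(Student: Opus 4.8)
The statement to prove is Theorem~\ref{prop_R}: if $(S,\cdot,\ast)$ is an APA and $\theta_a$ is a bijection for some $a\in S$, then $(S,\cdot,\ast)$ is left-trivially distributive. The natural strategy is to reduce everything to the machinery already assembled, namely \cref{lm:eq}, \cref{cor:group}, and \cref{lm:cancel}. First I would invoke \cref{lm:eq}: condition (iv) (``there is $a\in S$ such that $\theta_a$ is a bijection'') is equivalent to condition (iii) (``for each $x\in S$, $\theta_x\theta_x=\id_S$''), so from the hypothesis we immediately get that every $\theta_x$ is an involution, hence that $T(S)$ consists entirely of bijections.

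Next I would observe that $(T(S),\circ)$ is then a group: it is a subsemigroup of $S^S$ in which every element $\theta_x$ satisfies $\theta_x\circ\theta_x=\id_S$, so $\id_S\in T(S)$ and each element is its own inverse. A group is in particular cancellative as a semigroup. Then I would apply \cref{lm:cancel}, which says precisely that an APA whose transformation semigroup is cancellative is left-trivially distributive. This closes the argument. (One may alternatively cite \cref{cor:group} directly, which already packages ``$(T(S),\circ)$ is an elementary abelian $2$-group'' out of \cref{lm:eq}; cancellativity is then immediate.)

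The only thing one has to be slightly careful about is the logical direction of \cref{lm:eq}: the hypothesis gives condition (iv) at a \emph{single} point $a$, and we need the \emph{universally quantified} condition (iii) to conclude that all of $T(S)$ is a group, so it is essential that \cref{lm:eq} establishes the equivalence (not just an implication in the convenient direction). Since the excerpt states all four conditions as equivalent, this is exactly what we may assume. There is no real obstacle here: the substance of the theorem lives entirely in the earlier lemmas (\cref{lm:eq} doing the ``bijectivity propagates'' work, and \cref{lm:cancel} doing the ``cancellative $\Rightarrow$ LTD'' work), and the proof is a two-line assembly. If I wanted to make the proof self-contained rather than citation-heavy, the main work to reprove would be \cref{lm:cancel}, whose proof hinges on the identity \eqref{rel_4} together with cancellation to force $\theta_{x\cdot y}=\theta_x$; but given the statements already available, the clean approach is simply: hypothesis $\Rightarrow$ (by \cref{lm:eq}) $(T(S),\circ)$ is a group $\Rightarrow$ cancellative $\Rightarrow$ (by \cref{lm:cancel}) LTD.
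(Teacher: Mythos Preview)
Your proposal is correct and follows essentially the same route as the paper: use \cref{lm:eq} (or equivalently \cref{cor:group}, which packages it) to conclude that $(T(S),\circ)$ is a group and hence cancellative, then apply \cref{lm:cancel} to obtain LTD. If anything, you are slightly more explicit than the paper in spelling out the role of \cref{lm:eq} in passing from ``$\theta_a$ is a bijection'' to ``$(T(S),\circ)$ is a monoid,'' which the paper leaves implicit when invoking \cref{cor:group}.
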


\smallskip



If $(S, \cdot)$ is a group and $(S, \ast)$ is a right-zero semigroup, the solution $(S, s)$ associated with the APA $(S, \cdot, \ast)$ is bijective \cite{KaRe07}. We wonder if other APAs give rise to bijective solutions.
\begin{cor}
    Let $(S,\cdot,\ast)$ be an APA such that $S$ is finite and the associated solution $(S, s)$ is bijective. 
 Then $(S,\cdot,\ast)$ is left-trivially distributive and $(S, \cdot)$ is a left group.
\begin{proof}
Let $(S,s)$ be a bijective solution associated with APA $(S,\cdot,\ast)$. By \cite[Proposition 2.4]{CoOkVAn24x} $(S, \cdot)$ is a left group and by \cite[Proposition 2.10]{CoOkVAn24x}, for each $x\in S$, $\theta_x$ is a bijection. 
Therefore by \cref{prop_R} the claim follows.
\end{proof}
\end{cor}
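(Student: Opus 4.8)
The plan is to exploit the two external results cited from \cite{CoOkVAn24x} to immediately reduce the statement to \cref{prop_R}. Since $(S,s)$ is a bijective solution associated with the APA $(S,\cdot,\ast)$ and $S$ is finite, \cite[Proposition 2.4]{CoOkVAn24x} yields that $(S,\cdot)$ is a left group, which is the second half of the conclusion. For the first half, \cite[Proposition 2.10]{CoOkVAn24x} tells us that each left translation $\theta_x$ (for $x\in S$) is a bijection; in particular there exists $a\in S$ with $\theta_a$ bijective.

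With that existence in hand, I would invoke \cref{prop_R} directly: an APA in which some $\theta_a$ is a bijection is left-trivially distributive. This is exactly the hypothesis of \cref{prop_R}, so the conclusion that $(S,\cdot,\ast)$ is LTD follows without further work. Combining the two pieces gives the full statement.

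The only subtlety — and the step I would think about most carefully — is making sure the hypotheses of the two external propositions are genuinely met: that the solution $(S,s)$ in question is precisely of the form treated in \cite{CoOkVAn24x} (a set-theoretic bijective solution of the PE, written $s(x,y)=(x\cdot y, x\ast y)$), and that the finiteness assumption on $S$ is indeed what those propositions require. Assuming the citations apply as stated, there is no real obstacle; the proof is a two-line assembly. One could also remark, as an aside, that finiteness enters only through \cite[Proposition 2.4]{CoOkVAn24x}, and that if one already knew $(S,\cdot)$ to be a left group (or, more weakly, that some $\theta_a$ is bijective) the LTD conclusion would hold without any cardinality hypothesis.

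In LaTeX, the proof I would write is essentially:
\begin{proof}
Let $(S,s)$ be a bijective solution associated with the APA $(S,\cdot,\ast)$, where $S$ is finite. By \cite[Proposition 2.4]{CoOkVAn24x}, $(S,\cdot)$ is a left group. By \cite[Proposition 2.10]{CoOkVAn24x}, each $\theta_x$ is a bijection; in particular there is $a\in S$ with $\theta_a$ a bijection. Hence, by \cref{prop_R}, $(S,\cdot,\ast)$ is left-trivially distributive.
\end{proof}
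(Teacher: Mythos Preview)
Your proposal is correct and follows essentially the same approach as the paper's proof: invoke \cite[Proposition 2.4]{CoOkVAn24x} to get that $(S,\cdot)$ is a left group, invoke \cite[Proposition 2.10]{CoOkVAn24x} to get that each $\theta_x$ is a bijection, and then apply \cref{prop_R}. Your added remark about where finiteness is actually used is accurate and not in the paper, but the core argument is identical.
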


\begin{prob}
    Do they exist othere APAs that give rise to bijective solutions in the infinite case?
\end{prob}

\smallskip


\smallskip

\subsection{APA \texorpdfstring{$(S,\cdot,\ast)$}{} such that \texorpdfstring{$(S, \cdot)$}{} has a right identity}

\medskip

We consider APAs $(S, \cdot, \ast)$ in which $(S, \cdot)$ has a right identity, i.e., there exists $e\in S$ such that $x \cdot e=x$, for all $x \in S$.

\begin{lemma}\label{lemma_commute}
     Let $(S,\cdot, \ast)$ be an APA and assume that $e\in S$ is a right identity for $(S, \cdot)$. Then for all $x\in S$:
          \begin{enumerate}
                  \item $\theta_{e\cdot x}=\theta_e$,
                           \item $\theta_e\theta_x=\theta_x\theta_e$,
                           \item $\theta_e^2\theta_x=\theta_x$.
     \end{enumerate}
     
    \begin{proof}
Clearly, $e\in \E(S,\cdot)$. By \cref{lm:cubic}-4. we have $\theta_{e\cdot x}=\theta_e$. Further, we obtain:
\begin{align*}
    \theta_e\theta_x=\theta_{e\cdot e} \theta_x\underset{\eqref{rel_4}}{=}\theta_e\theta_x\theta_e\theta_{e\cdot x}\underset{\ref{lm:cubic}}{=}\theta_e\theta_x\theta_{e\cdot x}\theta_{e}\underset{\eqref{rel}}{=}\theta_x\theta_{e} \quad \Rightarrow\quad \theta_e^2\theta_x=\theta_e\theta_x\theta_{e\cdot x}\underset{\eqref{rel}}{=}\theta_x.
\end{align*}
 which is our claim.
    \end{proof}
\end{lemma}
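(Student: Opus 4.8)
The plan is to establish the three equalities in the stated order, exploiting that $e$ is a right identity (hence $e\cdot e=e$, so $e\in\E(S,\cdot)$) and then feeding each part into the next via the relations \eqref{rel} and \eqref{rel_4} already available. First I would note that $e\in\E(S,\cdot)$ and apply \cref{lm:cubic}-4. with the idempotent $e$ and the element $x$: since $x=x\cdot e$ it reads $\theta_e=\theta_{e\cdot x\cdot e}=\theta_{e\cdot x}$ — wait, more carefully, \cref{lm:cubic}-4. gives $\theta_e=\theta_{e\cdot x\cdot e}$, and using $x\cdot e = x$ with $x$ replaced by an appropriate element, or directly observing $e\cdot x\cdot e = e\cdot x$ because $e$ is a right identity, we get $\theta_{e\cdot x}=\theta_e$, which is part 1. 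So part 1 is essentially immediate from \cref{lm:cubic} together with the right-identity property.

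For part 2, the idea is to start from $\theta_e\theta_x$, rewrite the first factor using $e=e\cdot e$, apply \eqref{rel_4} (which states $\theta_{x\cdot y}\theta_z=\theta_y\theta_z\theta_x\theta_{y\cdot z}$) with the substitution making $x\cdot y=e\cdot e$, obtaining $\theta_e\theta_x=\theta_e\theta_x\theta_e\theta_{e\cdot x}$; then replace $\theta_{e\cdot x}$ by $\theta_e$ using part 1 (or \cref{lm:cubic}), giving $\theta_e\theta_x=\theta_e\theta_x\theta_e^2$; and finally recognize, via \eqref{rel} applied to the pair $(e,x)$ — namely $\theta_x=\theta_e\theta_x\theta_{e\cdot x}=\theta_e\theta_x\theta_e$ after part 1 — that the tail collapses. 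Actually the cleaner route, mirrored in the proof of \cref{lm:cubic}, is: $\theta_e\theta_x=\theta_{e\cdot e}\theta_x \overset{\eqref{rel_4}}{=}\theta_e\theta_x\theta_e\theta_{e\cdot x}\overset{\ref{lm:cubic}}{=}\theta_e\theta_x\theta_{e\cdot x}\theta_e\overset{\eqref{rel}}{=}\theta_x\theta_e$, where the third equality uses that $\theta_e$ commutes with $\theta_{e\cdot x}$ by \cref{lm:cubic}-2. and 3. (both equal $\theta_e^2$), and the last uses \eqref{rel} for $(e,x)$. This is the part where one must be careful to invoke exactly the right sub-items of \cref{lm:cubic}.

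For part 3, I would simply continue the computation: from $\theta_e^2\theta_x=\theta_e(\theta_e\theta_x)$ and part 2 we get $\theta_e^2\theta_x=\theta_e\theta_x\theta_e$, and now $\theta_e\theta_x\theta_e=\theta_e\theta_x\theta_{e\cdot x}\overset{\eqref{rel}}{=}\theta_x$, using part 1 to swap $\theta_e$ for $\theta_{e\cdot x}$ and then \eqref{rel} with the pair $(e,x)$, which reads $\theta_x=\theta_e\theta_x\theta_{e\cdot x}$. The main obstacle — really the only subtlety — is keeping track of which instance of \eqref{rel} and which of the four items of \cref{lm:cubic} is being applied at each step, since several of them look superficially alike (all the products $\theta_e\theta_{e\cdot x}$, $\theta_{e\cdot x}\theta_e$, $\theta_e\theta_e$ collapse to one another), and making sure the substitutions into \eqref{rel_4} are the legal ones. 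No deep idea is needed beyond chaining these identities, so the proof should be two or three lines of aligned equalities.

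\begin{proof}
Clearly, $e\in\E(S,\cdot)$. By \cref{lm:cubic}-4., $\theta_{e\cdot x}=\theta_{e\cdot x\cdot e}=\theta_e$, which is $1.$ Next,
\begin{align*}
\theta_e\theta_x=\theta_{e\cdot e}\theta_x\underset{\eqref{rel_4}}{=}\theta_e\theta_x\theta_e\theta_{e\cdot x}\underset{\ref{lm:cubic}}{=}\theta_e\theta_x\theta_{e\cdot x}\theta_e\underset{\eqref{rel}}{=}\theta_x\theta_e,
\end{align*}
where we used $1.$ together with \cref{lm:cubic}-2. and 3. to commute $\theta_e$ and $\theta_{e\cdot x}$, and \eqref{rel} for the pair $(e,x)$. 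Finally, by $2.$ and then $1.$ and \eqref{rel},
\begin{align*}
\theta_e^2\theta_x=\theta_e\theta_x\theta_e=\theta_e\theta_x\theta_{e\cdot x}\underset{\eqref{rel}}{=}\theta_x,
\end{align*}
which is our claim.
\end{proof}
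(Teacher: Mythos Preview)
Your proof is correct and follows essentially the same approach as the paper's own proof: both derive part 1 from \cref{lm:cubic}-4., then chain $\theta_e\theta_x=\theta_{e\cdot e}\theta_x$ through \eqref{rel_4}, commute the tail, and collapse via \eqref{rel}; part 3 is then immediate. The only cosmetic difference is that you invoke \cref{lm:cubic}-2.\ and 3.\ to justify commuting $\theta_e$ with $\theta_{e\cdot x}$, whereas part 1 already gives $\theta_{e\cdot x}=\theta_e$, making that step trivial---but this is harmless over-justification, not a gap.
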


\smallskip

\begin{prop}\label{left identity}
       Let $(S,\cdot, \ast)$ be an APA and assume that $(S, \cdot)$ admits a right identity.
       Then the transformation semigroup $(T(S),\circ)$ is commutative.
       \begin{proof}
       Let $e\in S$ be a right identity of $(S,\cdot)$. By \cref{lemma_commute}, for $x, y \in S$ we obtain
       \begin{align*}
        \theta_x  \theta_y=\theta_{x\cdot e}\theta_y\underset{\eqref{rel_4}}{=}\theta_e\theta_y\theta_x\theta_{e\cdot y}=\theta_e\theta_y\theta_x\theta_e=\theta_e^2\theta_y\theta_x=\theta_y\theta_x. 
       \end{align*}
       \end{proof}
\end{prop}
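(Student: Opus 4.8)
The plan is to deduce the statement directly from the three identities assembled in \cref{lemma_commute}, together with the technical relation \eqref{rel_4}; once those are available, the argument is a short bookkeeping computation about the order of the maps $\theta_\bullet$. First I would fix a right identity $e$ of $(S,\cdot)$ and observe that $e$ is in particular idempotent, so \cref{lemma_commute} applies and supplies, for every $x\in S$, the relation $\theta_{e\cdot x}=\theta_e$, the commutation $\theta_e\theta_x=\theta_x\theta_e$, and the absorption $\theta_e^2\theta_x=\theta_x$. Informally: $\theta_e$ is central in $T(S)$ and $\theta_e^2$ acts as a left identity on $T(S)$; these are exactly the two facts the computation will consume, the first one being used repeatedly.

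Next, I would fix $x,y\in S$ and exploit that $x=x\cdot e$ to write $\theta_x\theta_y=\theta_{x\cdot e}\theta_y$, then apply \eqref{rel_4} with the pair $(x,e)$ in the role of $(x,y)$ and with $y$ in the role of $z$, obtaining $\theta_{x\cdot e}\theta_y=\theta_e\theta_y\theta_x\theta_{e\cdot y}$. Using $\theta_{e\cdot y}=\theta_e$ from \cref{lemma_commute} the right-hand side becomes $\theta_e\theta_y\theta_x\theta_e$; transporting the trailing $\theta_e$ past $\theta_x$ and then past $\theta_y$ via the centrality $\theta_e\theta_z=\theta_z\theta_e$ turns it into $\theta_e^2\theta_y\theta_x$; and finally the absorption $\theta_e^2\theta_w=\theta_w$, applied with $\theta_w=\theta_y\theta_x=\theta_{\theta_y(x)}$ (a genuine element of $T(S)$ by \eqref{assoc}), collapses this to $\theta_y\theta_x$. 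Hence $\theta_x\theta_y=\theta_y\theta_x$, i.e.\ $(T(S),\circ)$ is commutative.

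I do not expect a real obstacle: all the structural work has already been absorbed into \cref{lm:cubic} and \cref{lemma_commute}. The only points needing a moment of care are that the trailing $\theta_e$ produced by \eqref{rel_4} must be carried all the way to the front of the word, which is legitimate precisely because item~2 of \cref{lemma_commute} makes $\theta_e$ central, and that one may apply the absorption identity to the composite $\theta_y\theta_x$ because, by \eqref{assoc}, a composite of two left translations is again a single left translation. (One could equivalently package this as: \cref{left identity} follows from \cref{lemma_commute} once one knows $\theta_e$ is central and $\theta_e^2$ is left-neutral on $T(S)$.)
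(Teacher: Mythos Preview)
Your proposal is correct and follows essentially the same chain of equalities as the paper's own proof: write $\theta_x\theta_y=\theta_{x\cdot e}\theta_y$, apply \eqref{rel_4}, then use the three items of \cref{lemma_commute} in turn to reduce to $\theta_y\theta_x$. Your write-up is in fact slightly more explicit about the justifications (the centrality moves and the use of \eqref{assoc} to see $\theta_y\theta_x\in T(S)$), but the argument is the same.
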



\begin{theor}\label{thm:leftanih}
      Let $(S,\cdot, \ast)$ be an APA and assume that $(S, \cdot)$ admits a right identity. Then $(S,\cdot, \ast)$ is left-trivially distributive.
      \begin{proof}
      Let $e\in S$ be a right identity of $(S,\cdot)$. For $x \in S$, we have $\theta_e\underset{\eqref{rel}}{=}\theta_x\theta_e\theta_x \underset{\ref{left identity}}{=}\theta_x^2\theta_e$.
      Further, for $x,y\in S$  
         \begin{align*}
             \theta_{x \cdot y}&\underset{\ref{lemma_commute}}{=}\theta_{x \cdot y}\theta_e\theta_e\underset{\eqref{rel_4}}{=} \theta_y\theta_e\theta_x\theta_{y \cdot e}\theta_e  \underset{\ref{left identity}}{=} \theta_y^2\theta_e\theta_e\theta_x= \theta_e\theta_e\theta_x\underset{\ref{lemma_commute}}{=}\theta_x. 
         \end{align*}
        Therefore, the claim follows.
      \end{proof}
\end{theor}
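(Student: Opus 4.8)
The plan is to exploit the two structural facts established just before this theorem for APAs with a right identity $e$, namely \cref{lemma_commute} (which gives $\theta_{e\cdot x}=\theta_e$, $\theta_e\theta_x=\theta_x\theta_e$, and $\theta_e^2\theta_x=\theta_x$) together with the commutativity of $(T(S),\circ)$ from \cref{left identity}. The goal is to show $\theta_{x\cdot y}=\theta_x$ for all $x,y\in S$, since by the remark in the proof of \cref{ts_comm} this is exactly the condition \eqref{LTD}.

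First I would record an auxiliary identity: for any $x\in S$, apply \eqref{rel} with both indices equal to $x$ to get $\theta_e=\theta_x\theta_e\theta_{x\cdot e}=\theta_x\theta_e\theta_x$, and then use commutativity of $T(S)$ (\cref{left identity}) to collapse this to $\theta_e=\theta_x^2\theta_e$. This says the idempotent-like element $\theta_e$ is annihilated on the left by every square $\theta_x^2$. Next I would compute $\theta_{x\cdot y}$ directly. Starting from \cref{lemma_commute}-3 applied to the element $x\cdot y$, write $\theta_{x\cdot y}=\theta_{x\cdot y}\theta_e^2$. Now apply \eqref{rel_4} to the triple $(x,y,e)$, which gives $\theta_{x\cdot y}\theta_e=\theta_y\theta_e\theta_x\theta_{y\cdot e}=\theta_y\theta_e\theta_x\theta_e$ (using $y\cdot e=y$). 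Feeding this into the previous expression and using commutativity plus $\theta_{y\cdot e}=\theta_y$ yields $\theta_{x\cdot y}=\theta_y\theta_e\theta_x\theta_e\theta_e=\theta_y^2\theta_e^2\theta_x$ after rearranging the commuting factors. Then the auxiliary identity $\theta_y^2\theta_e=\theta_e$ reduces this to $\theta_e^2\theta_x$, and \cref{lemma_commute}-3 gives $\theta_e^2\theta_x=\theta_x$. Hence $\theta_{x\cdot y}=\theta_x$, i.e. $(x\cdot y)\ast z=x\ast z$, which is \eqref{LTD}.

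The main obstacle — really the only delicate point — is keeping the bookkeeping of the four-factor products straight: one must repeatedly invoke commutativity of $(T(S),\circ)$ to move $\theta_e$ and $\theta_e^2$ past $\theta_x$ and $\theta_y^2$, and one must be careful that each use of \eqref{rel}, \eqref{rel_4}, and the clauses of \cref{lemma_commute} is applied to the correct elements (in particular noting $y\cdot e=y$ and $e$ idempotent). No new lemma is needed; everything follows by chaining the already-proved relations. I would present the computation as a single short display of the form
\begin{align*}
\theta_{x\cdot y}=\theta_{x\cdot y}\theta_e^2=\theta_y\theta_e\theta_x\theta_e\theta_e=\theta_y^2\theta_e^2\theta_x=\theta_e^2\theta_x=\theta_x,
\end{align*}
annotating each equality with the relation used, exactly in the style of the surrounding proofs.
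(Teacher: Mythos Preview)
Your proof is correct and follows essentially the same route as the paper: first derive the auxiliary identity $\theta_e=\theta_x^2\theta_e$ from \eqref{rel} and the commutativity of $(T(S),\circ)$, then expand $\theta_{x\cdot y}=\theta_{x\cdot y}\theta_e^2$ via \eqref{rel_4} and collapse using that identity and \cref{lemma_commute}. One small slip to fix: in your intermediate step you write $\theta_{x\cdot y}\theta_e=\theta_y\theta_e\theta_x\theta_{y\cdot e}=\theta_y\theta_e\theta_x\theta_e$, but since $y\cdot e=y$ the last factor must be $\theta_y$, not $\theta_e$; your subsequent rearrangement to $\theta_y^2\theta_e^2\theta_x$ (and hence the whole argument) is only valid with that correction.
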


\smallskip

 \subsection{APA \texorpdfstring{$(S,\cdot,\ast)$}{} such that \texorpdfstring{$(S, \cdot)$}{} has a left annihilator}

\medskip
 
 We consider APA $(S, \cdot,\ast)$ such that $(S, \cdot)$ has left annihilators. We briefly recall that a \emph{left annihilator} is an element $L\in S$ such that $L \cdot x=L$, for all $x \in S$. Similarly, a \emph{right annihilator} $R$ can be defined as an element $R \in S$ such that $x \cdot R=R$, for all $x \in S$. Moreover, an \emph{annihilator} of $(S,\cdot)$ is an element $0 \in X$ such that $0$ is both a left and a right annihilator. Clearly, left and right annihilators are elements of $E(S,\cdot)$.
 

\medskip


In the following, for a semigroup $(S, \cdot)$, let us introduce the set:
\begin{align*}
\Ann_l(S, \cdot):=\{ L \in S \, \mid \, 
\text{$L$ is a left annihilator in $(S, \cdot)$}\}.
\end{align*}



\begin{lemma}\label{left_ann}
    Let $(S,\cdot, \ast)$ be an APA and $L\in \Ann_l(S, \cdot)$. Then for all $x \in S$:
    \begin{enumerate}
        \item $\theta_x=\theta_L\theta_x\theta_L$,
        \item $\theta_x=\theta_x\theta_L\theta_L=\theta_L\theta_L\theta_x$,
        \item $\theta_x=\theta_{x \cdot L}$.
         \item $\theta_x\theta_x\theta_x=\theta_x$.
             
    \end{enumerate}
        \begin{proof}
            Let $x \in S$. The claim in $1.$ easily follows from \eqref{rel} since $\theta_x=\theta_L\theta_x\theta_{L\cdot x}=\theta_L\theta_x\theta_L$.  Obviously, $x\cdot L,L\in \E(S,\cdot)$, then by \cref{lm:cubic}, $\theta_{x \cdot L}$ and $\theta_L$ are cubic. 
      Next,  we get
$\theta_x=\theta_L\theta_x\theta_L=\theta_L\theta_L(\theta_L\theta_x\theta_L)=\theta_L\theta_L\theta_x.$
 Analogously, one can prove that $\theta_x=\theta_x\theta_L\theta_L$.       
 Moreover, we have
$\theta_{x \cdot L}\underset{1.}{=}  \theta_L\theta_{x \cdot L}\theta_L \underset{\eqref{rel_4}}{=} \theta_L(\theta_L\theta_L\theta_x\theta_{L}) \underset{1.}{=} \theta_x.$ 
        \end{proof}
\end{lemma}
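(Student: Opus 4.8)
The plan is to first pin down which elements are idempotent in $(S,\cdot)$, since that is what unlocks \cref{lm:cubic}. From $L\cdot x=L$ we get $L\cdot L=L$, so $L\in\E(S,\cdot)$, and $(x\cdot L)\cdot(x\cdot L)=x\cdot(L\cdot x)\cdot L=x\cdot L\cdot L=x\cdot L$, so $x\cdot L\in\E(S,\cdot)$ as well. Hence \cref{lm:cubic} applies and gives that $\theta_L$ and every $\theta_{x\cdot L}$ are cubic, i.e.\ $\theta_L^{3}=\theta_L$ and $\theta_{x\cdot L}^{3}=\theta_{x\cdot L}$; these are the only external facts I will use besides \eqref{rel} and \eqref{rel_4}.

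Item $1.$ falls out immediately by instantiating \eqref{rel} at $y=L$: since $L\cdot x=L$ it reads $\theta_x=\theta_L\theta_x\theta_{L\cdot x}=\theta_L\theta_x\theta_L$. For item $2.$ I would sandwich this identity with another copy of $\theta_L$ on each side in turn and absorb the extra factors using cubicity: $\theta_L\theta_L\theta_x=\theta_L\theta_L(\theta_L\theta_x\theta_L)=\theta_L^{3}\theta_x\theta_L=\theta_L\theta_x\theta_L=\theta_x$, and symmetrically $\theta_x\theta_L\theta_L=(\theta_L\theta_x\theta_L)\theta_L\theta_L=\theta_L\theta_x\theta_L^{3}=\theta_x$.

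For item $3.$ the idea is to apply item $1.$ with $x\cdot L$ in place of $x$, giving $\theta_{x\cdot L}=\theta_L\theta_{x\cdot L}\theta_L$, and then to simplify the trailing block $\theta_{x\cdot L}\theta_L$ via \eqref{rel_4}: reading the left-hand side of \eqref{rel_4} with the product factor taken to be $x\cdot L$ and the trailing index taken to be $L$, one gets $\theta_{x\cdot L}\theta_L=\theta_L\theta_L\theta_x\theta_{L\cdot L}=\theta_L\theta_L\theta_x\theta_L$. Plugging this back in and collapsing $\theta_L^{3}=\theta_L$ reduces $\theta_{x\cdot L}$ to $\theta_L\theta_x\theta_L$, which is $\theta_x$ by item $1.$ Finally, item $4.$ is a one-liner: by item $3.$ together with cubicity of $\theta_{x\cdot L}$, $\theta_x\theta_x\theta_x=\theta_{x\cdot L}^{3}=\theta_{x\cdot L}=\theta_x$.

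The only place that calls for a moment's care is choosing the right substitution in \eqref{rel_4} for item $3.$ (and keeping track of composition order when sandwiching in item $2.$); everything else is routine bookkeeping, so I do not expect a genuine obstacle.
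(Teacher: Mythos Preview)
Your proof is correct and follows essentially the same route as the paper's own argument: both establish item~1 directly from \eqref{rel} with $y=L$, use cubicity of $\theta_L$ (from \cref{lm:cubic}) to get item~2 by sandwiching, and derive item~3 by combining item~1 applied to $x\cdot L$ with the instance of \eqref{rel_4} giving $\theta_{x\cdot L}\theta_L=\theta_L\theta_L\theta_x\theta_L$. You are simply a little more explicit than the paper in verifying $x\cdot L\in\E(S,\cdot)$ and in spelling out item~4 (which the paper leaves implicit, having noted early that $\theta_{x\cdot L}$ is cubic and then shown $\theta_x=\theta_{x\cdot L}$).
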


\smallskip

\begin{theor}\label{teo_left_ann}
  Let $(S,\cdot, \ast)$ be an APA such that $\Ann_l(S, \cdot)$ is non-empty. Then $(S, \cdot, \ast)$ is left-trivially distributive. 
\end{theor}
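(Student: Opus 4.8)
The plan is to show that under the hypothesis $\Ann_l(S,\cdot)\neq\emptyset$ every translation $\theta_x$ satisfies $\theta_{x\cdot y}=\theta_x$, which is exactly the reformulation of \eqref{LTD} noted in the proof of \cref{ts_comm}. Fix a left annihilator $L\in\Ann_l(S,\cdot)$. The key structural facts are already packaged in \cref{left_ann}: for every $x\in S$ we have $\theta_x=\theta_L\theta_L\theta_x=\theta_x\theta_L\theta_L$, $\theta_x=\theta_L\theta_x\theta_L$, $\theta_x=\theta_{x\cdot L}$, and each $\theta_x$ is cubic. So the whole argument should be a short manipulation feeding these identities together with \eqref{rel_4} and \eqref{rel}.

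Here is the computation I would carry out. First I would establish commutativity of $\theta_L$ with an arbitrary $\theta_x$: using \eqref{rel_4} with the annihilator in the appropriate slots,
\begin{align*}
\theta_L\theta_x=\theta_{L\cdot L}\theta_x\underset{\eqref{rel_4}}{=}\theta_L\theta_x\theta_L\theta_{L\cdot x}=\theta_L\theta_x\theta_L\theta_L\underset{\text{\ref{left_ann}}}{=}\theta_L\theta_x\theta_x,
\end{align*}
and similarly $\theta_x\theta_L$ can be rewritten; combining with $\theta_x=\theta_x\theta_L\theta_L=\theta_L\theta_L\theta_x$ one gets that $\theta_L$ is a two-sided identity for $T(S)$ up to the cubic relation, i.e. effectively $\theta_L^2$ acts as the identity on each $\theta_x$. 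Then for arbitrary $x,y\in S$ I would expand $\theta_{x\cdot y}$ by \eqref{rel_4}, inserting $L$ so as to use $\theta_{x\cdot y}=\theta_{x\cdot y}\theta_L\theta_L$ first:
\begin{align*}
\theta_{x\cdot y}=\theta_{x\cdot y}\theta_L\underset{\eqref{rel_4}}{=}\theta_y\theta_L\theta_x\theta_{y\cdot L}=\theta_y\theta_L\theta_x\theta_y,
\end{align*}
and then reduce the right-hand side using the commutation of $\theta_L$, the cubic relation, and \eqref{rel} applied to $\theta_x=\theta_y\theta_x\theta_{y\cdot x}$, until what remains is $\theta_x$. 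This mirrors the pattern of the proofs of \cref{thm:leftanih} and \cref{left identity}, so the shape of the final chain of equalities should be two or three steps of \eqref{rel_4}/\eqref{rel} interleaved with the lemma.

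The main obstacle I anticipate is bookkeeping: getting the annihilator into exactly the right argument slot of \eqref{rel_4} so that a factor collapses (using $\theta_{y\cdot L}=\theta_L$ or $L\cdot x=L$), while keeping enough structure to recognize the residual product as $\theta_x$. There is a real risk of going in circles — producing an identity like $\theta_{x\cdot y}\theta_L=\theta_x\theta_L$ that only gives the result after right-multiplying by $\theta_L$ and invoking that $\theta_L^2$ is neutral on the relevant elements. So I would be careful to first nail down the precise sense in which $\theta_L^2=\mathrm{id}$ on $T(S)$ (it follows from \cref{left_ann}.2 that $\theta_L\theta_L\theta_x=\theta_x=\theta_x\theta_L\theta_L$ for all $x$, hence on the subsemigroup generated by the $\theta_x$ it is a genuine identity), and only then push through the displacement of $L$ through \eqref{rel_4}. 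Once $\theta_{x\cdot y}=\theta_x$ is proved, \eqref{LTD} is immediate since $(x\cdot y)\ast z=\theta_{x\cdot y}(z)=\theta_x(z)=x\ast z$, and one simply appeals to the remark in the proof of \cref{ts_comm} to conclude that $(S,\cdot,\ast)$ is left-trivially distributive.
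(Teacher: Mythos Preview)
Your plan is on the right track but the execution has real gaps. First, the displayed identity $\theta_L\theta_x\theta_L\theta_L\underset{\text{\ref{left_ann}}}{=}\theta_L\theta_x\theta_x$ is not a consequence of \cref{left_ann}; applying \cref{left_ann}.2 to $\theta_L\theta_x\theta_L\theta_L$ just returns $\theta_L\theta_x$, a tautology. (Commutation of $\theta_L$ with every $\theta_x$ \emph{does} hold, but it comes straight from parts 1 and 2 of \cref{left_ann}: $\theta_L\theta_x=\theta_L(\theta_L\theta_x\theta_L)=(\theta_L^2\theta_x)\theta_L=\theta_x\theta_L$.) Second, your next display starts with $\theta_{x\cdot y}=\theta_{x\cdot y}\theta_L$, which is unjustified; you only know $\theta_{x\cdot y}=\theta_{x\cdot y}\theta_L^2$. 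Fixing this and carrying the computation through yields $\theta_{x\cdot y}=\theta_y\theta_x\theta_y$, and at that point you are stuck: to collapse $\theta_y\theta_x\theta_y$ to $\theta_x$ you need either full commutativity of $T(S)$ or a $P_2$-type identity, neither of which you have established. This is exactly the ``going in circles'' you worried about.

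The paper avoids this by not aiming directly at $\theta_{x\cdot y}=\theta_x$. It first proves that \emph{all} of $T(S)$ is commutative, in one line: $\theta_x\theta_y=\theta_{x\cdot L}\theta_y\underset{\eqref{rel_4}}{=}\theta_L\theta_y\theta_x\theta_{L\cdot y}=\theta_L(\theta_y\theta_x)\theta_L=\theta_y\theta_x$, using \cref{left_ann}.3 and then \cref{left_ann}.1 (applied to $\theta_y\theta_x=\theta_{y\ast x}$). With commutativity in hand it shows $\theta_y=\theta_x^2\theta_y$, i.e.\ $(S,\ast)\in\mathcal{V}_{P_2}$, and then invokes \cref{thm:LTDAPA}. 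The key move you were missing is to start from $\theta_x=\theta_{x\cdot L}$ (\cref{left_ann}.3) rather than from $\theta_L=\theta_{L\cdot L}$: putting $L$ in the \emph{right} slot of \eqref{rel_4} makes the collapsing factor $\theta_{L\cdot y}=\theta_L$ appear immediately and gives full commutativity in one step, instead of only commutation with $\theta_L$.
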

  \begin{proof}
 
  First, we will show that the transformation semigroup $(T(S),\circ)$ is commutative. Let $x,y\in S$ and $L\in \Ann_l(S, \cdot)$. Then by \eqref{rel_4} and \cref{left_ann} we have:
 \begin{align*}
    \theta_x\theta_y& \underset{\ref{left_ann}}{=} \theta_{x \cdot L}\theta_y\underset{\eqref{rel_4}}{=} \theta_L\theta_y\theta_x\theta_{L \cdot y}
     \underset{\ref{left_ann}}{=}\theta_y\theta_x.
 \end{align*}
  Hence, 
    $ \theta_y\underset{\eqref{rel}}{=}\theta_{x}\theta_y\theta_{x \cdot y}\underset{\ref{left_ann}}{=}\theta_{x}\theta_y\theta_L\theta_{x\cdot y}\theta_L\underset{\eqref{rel_4}}{=}\theta_{x}\theta_y\theta_L\theta_y\theta_L\theta_x\theta_{y\cdot L}\underset{\ref{left_ann}}{=}\theta_x^2\theta_{y}^3\theta_L^2x=\theta_x^2\theta_{y}$.
 This implies that $(S,\ast)\in \mathcal{V}_{P_2}$ and by \cref{thm:LTDAPA} we obtain the desired conclusion.
  \end{proof}

\smallskip

\begin{ex} The APA in \cref{ex_left} is such that $\Ann_l(S, \cdot)=\{a, b\}$ and the algebras $\left(S, \cdot, \ast\right)$ and $\left(S, \cdot, \ast'\right)$ are left-trivially distributive. 
\end{ex}

\smallskip

\begin{cor} \label{prop_zero}
    Let $(S,\cdot, \ast)$ be an APA such that $(S, \cdot)$ has an annihilator $0$. Then $(S,\ast)$ is determined by 
    $ \theta_0 \in \End(S,\cdot)$.
    \begin{proof}
By \cref{teo_left_ann}, we have $\theta_x=\theta_{x \cdot 0}=\theta_0$, 
for all $x \in S$. Thus, the claim follows by the equation \eqref{p_two}. 
    \end{proof}
\end{cor}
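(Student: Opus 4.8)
The plan is to exploit the result of Theorem \ref{teo_left_ann}, which applies here since an annihilator $0$ of $(S,\cdot)$ is in particular a left annihilator, so $\Ann_l(S,\cdot)\neq\varnothing$ and the algebra is left-trivially distributive. First I would record what left-trivial distributivity gives in terms of the maps $\theta_x$: by the remark in the proof of \cref{ts_comm}, \eqref{LTD} is equivalent to $\theta_{x\cdot y}=\theta_x$ for all $x,y\in S$. Now I would use that $0$ is also a \emph{right} annihilator, so $x\cdot 0=0$ for every $x\in S$. Combining these two facts,
\begin{align*}
\theta_x=\theta_{x\cdot 0}=\theta_0
\end{align*}
for every $x\in S$; hence $(S,\ast)$ is determined by the single map $\theta_0$.

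Next I would identify the structural properties of $\theta_0$. Since \eqref{LTD} also forces each $\theta_x$ to be an endomorphism of $(S,\cdot)$ (this is exactly the conclusion of \cref{ts_comm}, obtained from \eqref{p_one}), in particular $\theta_0\in\End(S,\cdot)$. It remains to check that $\theta_0$ is idempotent, i.e.\ $\theta_0\theta_0=\theta_0$. Here I would apply \eqref{p_two}: for all $x,y\in S$ we have $\theta_{\theta_x(y)}\theta_{x\cdot y}=\theta_y$, and substituting the identity $\theta_w=\theta_0$ for every $w$ (which we have just established, noting $\theta_x(y),\,x\cdot y\in S$) this becomes $\theta_0\theta_0=\theta_0$. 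Thus $\theta_0$ is an idempotent endomorphism of $(S,\cdot)$, and by Example \ref{ex:gamma} the triple $(S,\cdot,\ast)$ is indeed an APA of the stated form, with $(S,\ast)$ determined by $\theta_0\in\End(S,\cdot)$.

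I do not expect any serious obstacle here: the entire argument is a short chain of substitutions into \eqref{p_one}, \eqref{p_two} once \cref{teo_left_ann} is invoked. The only point requiring a moment's care is making sure both the left-annihilator property (to get $\Ann_l(S,\cdot)\neq\varnothing$ and hence LTD) and the right-annihilator property (to get $x\cdot 0=0$, collapsing all the $\theta_x$ to $\theta_0$) are used — neither alone suffices. Everything else, including the endomorphism and idempotency claims, is already packaged in \cref{ts_comm} and Example \ref{ex:gamma}.
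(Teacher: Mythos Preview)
Your proposal is correct and follows essentially the same route as the paper's proof: invoke \cref{teo_left_ann} (using that $0$ is a left annihilator) to obtain LTD, hence $\theta_x=\theta_{x\cdot 0}$, and then use the right-annihilator property $x\cdot 0=0$ to collapse everything to $\theta_0$, with \eqref{p_two} (and \eqref{p_one} via \cref{ts_comm} / Example~\ref{ex:gamma}) giving idempotency and the endomorphism property. The only difference is that you spell out the endomorphism and idempotency verifications explicitly, whereas the paper compresses them into a single appeal to \eqref{p_two}.
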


\smallskip

\section{Right-trivially distributive APAs}
In this section, we introduce and investigate the ``mirror class" of right-trivially distributive APAs. As you will see, they are quite different from the left-trivially distributive ones. In this case, we also give a variety of semigroups that will be helpful to describe it.

\medskip

\begin{defin}\label{def:RTD}
   An algebra $(S,\cdot,\ast)$ is called \emph{right-trivially distributive} if for all $x,y,z\in S$  the following law holds:
    \begin{align*}\label{RTD}\tag{RTD}
        (x\cdot y)\ast z=y\ast z.
    \end{align*}
    \end{defin}


        \begin{ex} \hspace{1mm}
        \begin{enumerate}
            \item     Let $(S, \cdot)$ be a right-zero semigroup and $(S, \ast)$ be a magma. Then the algebra $(S, \cdot, \ast)$ is right-trivially distributive. 
            \item The APAs $(S, \cdot, \ast)$ as in \cref{cdot=} are right-trivially distributive.
        \end{enumerate}
\end{ex}


\smallskip

The following shows which APAs are both left and right-trivially distributive.
\begin{prop}\label{prop:both}
   An APA $(S, \cdot, \ast)$ is both left and right-trivially distributive if and only if $(S, \ast)$ is determined by some $\gamma \in \End(S, \cdot)$.
   \begin{proof}
   Assume that $(S, \cdot, \ast)$ is both left and right-trivially distributive. Then, for all $x, y \in S$, $\theta_x=\theta_{x \cdot y}=\theta_y$. From \eqref{p_two}, the claim follows.
   
     For the converse, evidently, the APAs in \cref{ex:gamma} are both left-trivially distributive and right-trivially distributive.   
   \end{proof} 
\end{prop}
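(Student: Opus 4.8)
The plan is to prove both implications directly from the characterizations of left-trivial and right-trivial distributivity in terms of the left-translation maps $\theta_x$, together with equation \eqref{p_two}, and to lean on \cref{ex:gamma} for the converse. The key preliminary observation is that \eqref{LTD} is equivalent to the identity $\theta_{x \cdot y}=\theta_x$ for all $x,y\in S$, while \eqref{RTD} is equivalent to $\theta_{x \cdot y}=\theta_y$ for all $x,y \in S$; both restatements are immediate from the definition $x \ast z=\theta_x(z)$.

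For the forward direction, I would assume $(S,\cdot,\ast)$ is both LTD and RTD. Fix $x,y\in S$. From LTD we get $\theta_{x\cdot y}=\theta_x$, and from RTD we get $\theta_{x\cdot y}=\theta_y$, hence $\theta_x=\theta_y$ for all $x,y\in S$. Call this common map $\gamma$, so that $(S,\ast)$ is determined by $\gamma$ in the sense of the Notation. It remains to check $\gamma\in\End(S,\cdot)$ with $\gamma^2=\gamma$; by \cref{ex:gamma} this is automatic once we know $(S,\cdot,\ast)$ is an APA and $(S,\ast)$ is determined by $\gamma$. (Alternatively, one reads off $\gamma\in\End(S,\cdot)$ from \eqref{p_one}, which becomes $\gamma(y)\cdot\gamma(z)=\gamma(y\cdot z)$, and $\gamma^2=\gamma$ from \eqref{p_two}, which collapses to $\gamma\circ\gamma=\gamma$ since all $\theta$'s equal $\gamma$.)

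For the converse, suppose $(S,\ast)$ is determined by some $\gamma\in\End(S,\cdot)$; then by \cref{ex:gamma} the triple $(S,\cdot,\ast)$ is an APA as soon as $\gamma^2=\gamma$, and it is exactly the APA of \cref{ex:gamma}. There, for all $x,y,z$ one has $(x\cdot y)\ast z=\gamma(z)=x\ast z=y\ast z$, so \eqref{LTD} and \eqref{RTD} both hold trivially.

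The only delicate point is the interaction with \cref{ex:gamma}: that example records that an algebra $(S,\cdot,\ast)$ with $(S,\ast)$ determined by $\gamma$ is an APA \emph{if and only if} $\gamma\in\End(S,\cdot)$ and $\gamma^2=\gamma$, so in the forward direction one must be careful to only invoke it knowing already that $(S,\cdot,\ast)$ is an APA — which is part of the hypothesis — and in the statement ``$\gamma\in\End(S,\cdot)$'' is precisely what \cref{ex:gamma} supplies. So there is essentially no obstacle; the argument is a two-line deduction in each direction, and the main thing to get right is to phrase the converse as an appeal to the already-established \cref{ex:gamma} rather than re-deriving the pentagon identities.
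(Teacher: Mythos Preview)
Your proof is correct and follows essentially the same approach as the paper: both directions hinge on the equivalence of \eqref{LTD} and \eqref{RTD} with $\theta_{x\cdot y}=\theta_x$ and $\theta_{x\cdot y}=\theta_y$ respectively, yielding a single map $\gamma$, and the converse is an appeal to \cref{ex:gamma}. Your write-up is simply more explicit about extracting $\gamma\in\End(S,\cdot)$ and $\gamma^2=\gamma$ from \eqref{p_one} and \eqref{p_two}, whereas the paper leaves this to the reader.
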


\begin{ex}
Let $n \geq 1$, $S=\{0,1,\ldots,n-1\}$, and define for $x,y\in S$ two binary operations:
\begin{align*}
x\cdot y=\begin{cases}n-2  &{\rm if}\quad y=n-2\\
x  &{\rm if}\quad x=y\\
0  &{\rm if}\quad x\neq y\; \,{\rm and}\; \,y\neq n-2
\end{cases}
\quad \text{and} \quad 
x\ast y=\begin{cases}n-1  &{\rm if}\quad y= n-1\\
0  &{\rm if}\quad y\neq n-1
\end{cases}\, .
\end{align*}
Clearly, for all $x,y\in S$, $\theta_x=\theta_y=\theta$. Further, it is not difficult to check that for $y,z\in S$:
\begin{align*}
 &\theta(y)\cdot \theta(z)=
 \begin{cases}
 0 \quad &{\rm if} \quad y\neq n-1 \quad {\rm or}\quad z\neq n-1\\
 n-1 \quad &{\rm if}\quad x= y=n-1
 \end{cases}\quad =\quad\theta(y\cdot z).
 \end{align*}
Then $\theta\in \End(S, \cdot)$ and $(S,\cdot,\ast)$ is a left and right-trivially distributive APA.
\end{ex}

\smallskip

Let us now introduce the next family of semigroups that will be helpful to describe some new classes
of APAs.
    \begin{defin}
Let $0<k\in \mathbb{N}$. A semigroup $(S,\ast)$ is called \emph{$Q_k$-semigroup} if it satisfies for all $x, y, z \in S$
  \begin{align}\label{Q}
  x \ast y^k \ast z= y \ast z.\tag{$Q_k$}
   \end{align}

\end{defin}

  \noindent Note that if $(S,\ast)$ is a $Q_r$-semigroup, then $(S,\ast)$ is a $Q_{rk}$-semigroup, for all $k \in \mathbb{N}$. Moreover,  $Q_1$-semigroups are known in the literature as \emph{exclusive semigroups} (see \cite{Yamada}). 

\smallskip

\begin{nota*}
    In the following, let $\mathcal{V}_{Q_k}$ denote the variety of all semigroups satisfying \eqref{Q}.
\end{nota*}

\begin{ex}
Let $X$ be a set and assume that $0\notin X$. Let $\mathcal{B}(X):=(\{0\}\cup X)\times X$. Let us define a binary operation $\ast\,\colon \mathcal{B}(X)\times \mathcal{B}(X)\to \mathcal{B}(X)$ in the following way: for $(a,b), (c,d)\in \mathcal{B}(X)$
\begin{align*}
(a,b)\ast (c,d) :=\begin{cases}(c,d) &{\rm if}\quad c\neq 0\\
(b,d) &{\rm if}\quad c=0
\end{cases}
\end{align*}
It is easy to check that $(\mathcal{B}(X),\ast)$ is a semigroup generated by the set $\mathcal{X}:=\{(0,x)\, \colon \, x\in X\}$ and $(\mathcal{B}(X),\ast)\in \mathcal{V}_{Q_1}$. Clearly, $\E(\mathcal{B}(X),\ast)=\{(a,b)\mid a\neq 0\}$.
\end{ex}

\smallskip

\begin{ex}\label{ex:Q}
Let $\mathcal{B}(\{x\})=\{x:=(0, 1), \, x^2:=(1, 1)\}$. Then $(\{x,x^2\},\ast)$ with
$$\begin{tabular}{c | c c c cc}
    $\ast$ & $x$ & $x^2$    \\
    \hline
   $x$& $x^2$ & $x^2$  \\
    $x^2$ & $x^2$ & $x^2$ 
    \end{tabular}
 $$ 
 is a semigroup which satisfies the property ($Q_1$).
\end{ex}
\smallskip
\begin{theor}
For any non-empty set $X$, the algebra $\left(\mathcal{B}(X), \, \ast \right)$ is free in the variety $\mathcal{V}_{Q_1}$ over the set $\mathcal{X}$. 
   \begin{proof}
   Let $(S,\ast)\in \mathcal{V}_{Q_1}$ and $h\colon \mathcal{X}\to S$, $(0,x)\mapsto s_x\in S$. Note that $\overline{h}\,\colon \mathcal{B}(X)\to S$, 
   \begin{align*}
\overline{h}((a,b)):=\begin{cases}s_a\ast s_b &{\rm if}\quad a\neq 0\\
s_b &{\rm if}\quad a=0
\end{cases}
\end{align*}
is a semigroup homomorphism. Indeed, for $(a,b), (c,d)\in \mathcal{B}(X)$, by $(Q_1)$ we have
\begin{align*}
\overline{h}((a,b)\ast(c,d))=\begin{cases}\overline{h}((c,d))&{\rm if}\quad c\neq 0\\
\overline{h}((b,d)) &{\rm if}\quad c=0
\end{cases}\quad =\begin{cases}s_c\ast s_d&{\rm if}\quad c\neq 0\\
s_b\ast s_d &{\rm if}\quad c=0
\end{cases}
\end{align*}
Moreover, 
\begin{align*}
\overline{h}((a,b))\ast\overline{h}((c,d))=\begin{cases}\overline{h}((a,b))\ast s_c\ast s_d&{\rm if}\quad c\neq 0\\
\overline{h}((a,b))\ast s_d &{\rm if}\quad c=0
\end{cases}\quad \underset{(Q_1)}{=}\begin{cases}s_c\ast s_d&{\rm if}\quad c\neq 0\\
s_b\ast s_d &{\rm if}\quad c=0
\end{cases}
\end{align*}
Clearly, $\overline{h}/_{\mathcal{X}}=h$ which finishes the proof.
   \end{proof}
   \end{theor}

\smallskip
\begin{cor}
    The variety $\mathcal{V}_{Q_1}$  is locally finite.
\end{cor}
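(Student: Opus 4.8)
The claim is that the variety $\mathcal{V}_{Q_1}$ of $Q_1$-semigroups is locally finite. Since $\mathcal{V}_{Q_1}$ is an equational class, it suffices to show that every finitely generated member is finite; and by the universal property of the free object, it is enough to bound the size of the relatively free semigroup $F_{\mathcal{V}_{Q_1}}(X)$ on a finite set $X$. The plan is to invoke the preceding theorem, which identifies this free object concretely as $\left(\mathcal{B}(X),\ast\right)$ with underlying set $\mathcal{B}(X) = (\{0\}\cup X)\times X$.

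\begin{proof}
Let $X$ be a finite set with $|X| = n$. By the previous theorem, the free semigroup in $\mathcal{V}_{Q_1}$ over an $n$-element set of generators is isomorphic to $\left(\mathcal{B}(X), \ast\right)$, whose underlying set $(\{0\}\cup X)\times X$ has exactly $(n+1)\cdot n = n^2 + n$ elements. Hence the free $\mathcal{V}_{Q_1}$-semigroup on $n$ generators is finite. Since every finitely generated semigroup in $\mathcal{V}_{Q_1}$ is a homomorphic image of such a free semigroup, it too is finite. Therefore $\mathcal{V}_{Q_1}$ is locally finite.
\end{proof}

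The argument is essentially immediate once the explicit free-object description is in hand, so there is no real obstacle here; the work has already been done in constructing $\mathcal{B}(X)$ and verifying its freeness. One small point worth stating carefully is the reduction "finitely generated $\Rightarrow$ homomorphic image of a finitely generated free object," which is the standard universal-algebra fact that the free algebra on a generating set of the appropriate cardinality surjects onto any algebra generated by that many elements; this is what converts the finiteness of $\mathcal{B}(X)$ into local finiteness of the whole variety. If one wanted a fully self-contained count independent of the freeness theorem, one could instead argue directly that in any $Q_1$-semigroup generated by $x_1,\dots,x_n$ every product collapses, via $(Q_1)$ and the resulting idempotency of squares, to a word of length at most two, giving the same $n^2+n$ bound — but citing the free-object theorem is cleaner.
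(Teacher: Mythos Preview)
Your proof is correct and matches the paper's intended argument: the corollary is stated without proof in the paper, immediately after the theorem identifying $(\mathcal{B}(X),\ast)$ as the free $\mathcal{V}_{Q_1}$-semigroup, and your reasoning---that $|\mathcal{B}(X)|=(n+1)n$ is finite for finite $X$, hence every finitely generated member is finite---is exactly the implicit justification.
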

\smallskip
\begin{theor}\label{lemma_q1}
Let $(S,\cdot,\ast)$ be an APA. Then $(S,\cdot,\ast)$ is right-trivially distributive if and only if $(S, \ast) \in \mathcal{V}_{Q_1}$.
\begin{proof}
   Assume that $(S,\cdot,\ast)$ is right-trivially distributive.  By \eqref{rel}, for each $x, y \in S$
    \begin{align}\label{rtd}
\theta_y=\theta_x\theta_y\theta_y.    \end{align}
Therefore, for $x, y \in S$, we have
           $$\theta_x\theta_y=\theta_{\theta_y(y) \cdot x}\theta_y\underset{\eqref{rel_4}}{=}\theta_x\theta_y\theta_{\theta_y(y)}\theta_{x \cdot y}\underset{\eqref{assoc}}{=}\theta_x\theta_y\theta_y\theta_y\theta_y\underset{\eqref{rtd}}{=}\theta_x\theta_y\theta_y\underset{\eqref{rtd}}{=}\theta_y.$$
        Hence, $(S, \ast) \in \mathcal{V}_{Q_1}$.

        Vice versa, if $(S, \ast) \in \mathcal{V}_{Q_1}$, for $x, y \in S$ we have $\theta_y\underset{\eqref{rel}}{=}\theta_x\theta_y\theta_{x \cdot y}=\theta_y\theta_{x \cdot y}=\theta_{x \cdot y }$, which is our claim.
\end{proof}
\end{theor}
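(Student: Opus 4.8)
The plan is to recast both conditions as identities among the left-translation maps $\theta_x$ and then move between them using the structural relations \eqref{rel}, \eqref{rel_4} and \eqref{assoc} established above. Note first the dictionary: \eqref{RTD} is precisely the statement $\theta_{x\cdot y}=\theta_y$ for all $x,y\in S$, while $(S,\ast)\in\mathcal{V}_{Q_1}$ is precisely $\theta_x\theta_y=\theta_y$ for all $x,y\in S$ (this is \eqref{Q} with $k=1$, read at an arbitrary third argument). The direction from $\mathcal{V}_{Q_1}$ to \eqref{RTD} is then immediate: rewrite \eqref{rel} as $\theta_y=\theta_x\theta_y\theta_{x\cdot y}$ and apply the $Q_1$-identity twice, first collapsing $\theta_x\theta_y$ to $\theta_y$ and then $\theta_y\theta_{x\cdot y}$ to $\theta_{x\cdot y}$, to obtain $\theta_y=\theta_{x\cdot y}$.

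For the converse, assume \eqref{RTD}, i.e.\ $\theta_{x\cdot y}=\theta_y$, and aim for $\theta_x\theta_y=\theta_y$. I would first record the auxiliary relation $\theta_y=\theta_x\theta_y\theta_y$, obtained by substituting $\theta_{y\cdot x}=\theta_x$ into \eqref{rel}; specializing $x=y$ gives $\theta_y=\theta_y^3$. The crux is then to write $\theta_x=\theta_{(y\ast y)\cdot x}$, which is legitimate by \eqref{RTD}, and to feed $\theta_{(y\ast y)\cdot x}\,\theta_y$ into \eqref{rel_4} with the substitution $x\mapsto y\ast y$, $y\mapsto x$, $z\mapsto y$. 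This rewrites $\theta_x\theta_y$ as $\theta_x\theta_y\,\theta_{\theta_y(y)}\,\theta_{x\cdot y}$; applying \eqref{assoc} to turn $\theta_{\theta_y(y)}$ into $\theta_y\theta_y$ and \eqref{RTD} to turn $\theta_{x\cdot y}$ into $\theta_y$ leaves $\theta_x\theta_y^4$, which reduces to $\theta_y$ using $\theta_y=\theta_y^3$ and then $\theta_y=\theta_x\theta_y\theta_y$.

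I expect the main obstacle to be spotting that substitution. Condition \eqref{RTD} only permits deleting a left factor that already sits inside a $\cdot$-product, so a direct attack on the word $\theta_x\theta_y$ via \eqref{rel_4} stalls; one must first manufacture a $\cdot$-product whose value is $\theta_x$, and the particular choice $x=(y\ast y)\cdot x$ is exactly what makes \eqref{rel_4} land on an expression that the quasi-idempotent relations on the $\theta_y$'s can collapse. Everything downstream of that step is routine bookkeeping.
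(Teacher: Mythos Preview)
Your proof is correct and essentially identical to the paper's: both directions use the same moves, and in particular the forward direction hinges on the same substitution $\theta_x=\theta_{(y\ast y)\cdot x}$ (the paper writes $y\ast y$ as $\theta_y(y)$) fed into \eqref{rel_4}, followed by \eqref{assoc} and the reductions $\theta_y=\theta_x\theta_y\theta_y$ and $\theta_y^3=\theta_y$. The only cosmetic difference is that you isolate $\theta_y=\theta_y^3$ explicitly before using it, whereas the paper absorbs this into the chain of equalities.
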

 It directly follows from \cref{lemma_q1}:
\begin{cor}\label{prop:lzs}
  Let $(S, \cdot, \ast)$ be an APA. Then $(S, \cdot, \ast)$ is right-trivially distributive if and only if $ (T(S),\circ)$ is a right-zero semigroup. In particular, in any right-trivially distributive APA, all the maps $\theta_x$, for $x\in S$, are idempotent.
  \end{cor}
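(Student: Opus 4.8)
The plan is to deduce \cref{prop:lzs} directly from \cref{lemma_q1} by translating the condition $(S,\ast)\in\mathcal{V}_{Q_1}$ into a statement about the transformation semigroup $(T(S),\circ)$. The key observation is that, by the notation convention, $x\ast y=\theta_x(y)$ and associativity \eqref{assoc} gives $\theta_{x\ast y}=\theta_x\theta_y$, so powers in $(S,\ast)$ correspond to powers in $(T(S),\circ)$: $\theta_{y\ast y}=\theta_y\theta_y$, and more generally $\theta_{y^k}=\theta_y^k$. Thus the identity $(Q_1)$, namely $x\ast y\ast z=y\ast z$ for all $x,y,z$, read through $\theta$, should be equivalent to $\theta_x\theta_y=\theta_y$ for all $x,y$, which is precisely the defining identity of a right-zero semigroup on the set $T(S)=\{\theta_x\mid x\in S\}$.

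First I would make the forward direction precise. If $(S,\cdot,\ast)$ is RTD, then by \cref{lemma_q1} we have $(S,\ast)\in\mathcal{V}_{Q_1}$. Inside the proof of \cref{lemma_q1} the chain of equalities already establishes $\theta_x\theta_y=\theta_y$ for all $x,y\in S$ (this is exactly the displayed computation ending in $\theta_x\theta_y=\theta_y$). Hence for any two elements $u=\theta_x$ and $v=\theta_y$ of $T(S)$ we get $u\circ v=v$, so $(T(S),\circ)$ is a right-zero semigroup. In particular, taking $v=u$ yields $\theta_x\circ\theta_x=\theta_x$, so every $\theta_x$ is idempotent, giving the ``in particular'' clause.

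Conversely, suppose $(T(S),\circ)$ is a right-zero semigroup, i.e.\ $\theta_x\theta_y=\theta_y$ for all $x,y\in S$. Applying both sides to an arbitrary $z\in S$ gives $\theta_x(\theta_y(z))=\theta_y(z)$, that is, $x\ast(y\ast z)=y\ast z$, which is $(Q_1)$ (here $y^1=y$). So $(S,\ast)\in\mathcal{V}_{Q_1}$, and by \cref{lemma_q1} again $(S,\cdot,\ast)$ is RTD. This closes the equivalence.

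The argument is essentially a one-line dictionary translation, so there is no real obstacle; the only point requiring a little care is making sure the correspondence between $k$-th powers in $(S,\ast)$ and in $(T(S),\circ)$ is set up correctly via \eqref{assoc}, but for $(Q_1)$ only first powers occur, so even that is immediate. The mild subtlety worth stating explicitly is that ``right-zero semigroup'' here means right-zero as an abstract semigroup on the underlying set $T(S)$, not as a set of transformations; the identity $\theta_x\theta_y=\theta_y$ is exactly the right-zero law and needs no further interpretation.
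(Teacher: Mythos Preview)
Your proposal is correct and follows essentially the same route as the paper, which simply states that the corollary follows directly from \cref{lemma_q1}: you make explicit the one-line translation between the $(Q_1)$ identity $x\ast y\ast z=y\ast z$ and the right-zero law $\theta_x\theta_y=\theta_y$ on $T(S)$, and the idempotence of each $\theta_x$ is the special case $x=y$.
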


\smallskip


\begin{ex}
    Let $(S, \cdot, \ast)$ be an APA that also is a solution of the quantum Yang-Baxter equation (QYBE) \cite{Dr92}. Then by \cref{lemma_q1} and by the condition Y2 in \cite[Proposition 8]{CaMaSt20}, $(S, \cdot, \ast)$ is right-trivially distributive.
    \end{ex}

\smallskip

\smallskip

 \begin{prop}\label{prop:rtd}
    Let $(S, \cdot, \ast)$ be a RTD algebra such that $(S,\cdot)$ and $(S,\ast)$ both are semigroups. Then $(S, \cdot, \ast)$ is an APA if and only if $(S, \ast) \in V_{Q_1}$ and for all $x, y, z\in S$.
    \begin{align}\label{false_distri}
        x\ast \left(y \cdot z\right)=\left( x \ast y \right) \cdot \left( y \ast z\right),
    \end{align}
    \begin{proof}
       Initially, assume that $(S, \cdot, \ast)$ is an APA. Thus, the equality \eqref{false_distri} follows from the equation \eqref{p_one} and from \cref{lemma_q1} we know that $(S, \ast) \in \mathcal{V}_{Q_1}$. 
       
    For the converse, equation \eqref{p_one} is obvious. In addition, for $x, y, z \in S$, we have:
    \begin{align*}
           y \ast z = x \ast y  \ast z=x \ast y \ast y \ast z=x \ast y \ast (x \cdot y) \ast z,
       \end{align*}
       i.e., equation \eqref{p_two} is satisfied.
    \end{proof}
\end{prop}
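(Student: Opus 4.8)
The plan is to unwind the definition of an APA and verify its two defining identities directly. Since the hypothesis already supplies that both $(S,\cdot)$ and $(S,\ast)$ are semigroups, equation \eqref{assoc} holds automatically, so the whole content of ``$(S,\cdot,\ast)$ is an APA'' reduces to checking \eqref{p_one} and \eqref{p_two}; and throughout, the standing assumptions that $(S,\cdot,\ast)$ is RTD and that $\ast$ is associative may be used freely.

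For the forward implication I would argue as follows. Assume $(S,\cdot,\ast)$ is an APA. Since it is RTD, \cref{lemma_q1} immediately yields $(S,\ast)\in\mathcal{V}_{Q_1}$. To get \eqref{false_distri}, take \eqref{p_one}, which in the present notation reads $(x\ast y)\cdot\bigl((x\cdot y)\ast z\bigr)=x\ast(y\cdot z)$, and substitute the RTD identity $(x\cdot y)\ast z=y\ast z$ on the left-hand side; this is precisely \eqref{false_distri} with the two sides interchanged.

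For the converse, assume $(S,\ast)\in\mathcal{V}_{Q_1}$ and that \eqref{false_distri} holds; I must verify \eqref{p_one} and \eqref{p_two}. For \eqref{p_one}: starting from $(x\ast y)\cdot\bigl((x\cdot y)\ast z\bigr)$, apply RTD to replace $(x\cdot y)\ast z$ by $y\ast z$, and then \eqref{false_distri} rewrites $(x\ast y)\cdot(y\ast z)$ as $x\ast(y\cdot z)$. For \eqref{p_two}: starting from $(x\ast y)\ast\bigl((x\cdot y)\ast z\bigr)$, apply RTD to obtain $(x\ast y)\ast(y\ast z)$, use associativity of $\ast$ to regroup this as $\bigl((x\ast y)\ast y\bigr)\ast z$, and finally apply the $Q_1$ identity $a\ast y\ast z=y\ast z$ with $a=x\ast y$ to reach $y\ast z$, which is the right-hand side of \eqref{p_two}.

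I do not expect a real obstacle here; the proof is a short chain of substitutions. The only points that need care are keeping the manipulations in \eqref{p_two} in the right order (RTD first, then associativity of $\ast$, then $Q_1$), and checking that the two halves of the characterization line up cleanly with the two defining identities — namely that \eqref{p_one} corresponds to \eqref{false_distri} (RTD turns one into the other) while \eqref{p_two} corresponds to the $Q_1$ law (RTD plus associativity turns one into the other), so that no extra hypothesis is secretly required for either identity.
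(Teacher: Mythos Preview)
Your proposal is correct and follows essentially the same approach as the paper's proof: both directions use RTD to translate between \eqref{p_one} and \eqref{false_distri}, invoke \cref{lemma_q1} for the forward half, and derive \eqref{p_two} from RTD together with the $Q_1$ identity. The only cosmetic difference is that the paper runs the chain for \eqref{p_two} from $y\ast z$ forward to $(x\ast y)\ast((x\cdot y)\ast z)$, while you run it in the opposite direction; the steps are identical.
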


\smallskip

\subsection{Construction of right-trivally distributive APAs}
\medskip

 One can easily note that by (RTD) and \eqref{false_distri} we get the same result as Lemma \ref{lm:form} for left-trivially distributive APAs.

\begin{lemma}\label{lm:form1}
    Let $(S,\cdot,\ast)$ be a right-trivially distributive APA generated by a set $X\subseteq S$. Let $\langle X\rangle_\ast$ be the subsemigroup of $(S,\ast)$ generated by $X$. Then for each $w\in S$ there exist $s_1,\ldots,s_p\in \langle X\rangle_\ast$ such that $w=s_1\cdot s_2\cdot \ldots\cdot s_p$.
    \begin{proof}
    The proof is analogue to the proof of \cref{lm:form}.
    \end{proof}
    \end{lemma}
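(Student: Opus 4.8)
The plan is to prove the stronger statement that the set
\[
D \;:=\; \{\, s_1\cdot s_2\cdot\ldots\cdot s_p \;\mid\; p\ge 1,\ s_1,\ldots,s_p\in\langle X\rangle_\ast \,\}
\]
is a subalgebra of $(S,\cdot,\ast)$. Since $X\subseteq\langle X\rangle_\ast\subseteq D$ and $X$ generates $(S,\cdot,\ast)$, this forces $D=S$, which is precisely the assertion. This mirrors the proof of \cref{lm:form}, with left-trivial distributivity and the left-distributive law replaced throughout by their right-handed analogues: \eqref{RTD}, $(x\cdot y)\ast z=y\ast z$, and \eqref{false_distri}, $x\ast(y\cdot z)=(x\ast y)\cdot(y\ast z)$. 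Note that \eqref{false_distri} is available here, being equation \eqref{p_one} rewritten by means of \eqref{RTD}, as already recorded in \cref{prop:rtd}.

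Closure of $D$ under $\cdot$ is immediate: concatenate the two products (using associativity of $\cdot$). The content of the argument is closure under $\ast$. Given $u=a_1\cdot\ldots\cdot a_m$ and $v=b_1\cdot\ldots\cdot b_r$ with all $a_i,b_j\in\langle X\rangle_\ast$, I would first apply \eqref{RTD} once, with $x=a_1\cdot\ldots\cdot a_{m-1}$ and $y=a_m$, to collapse the left factor: $u\ast v=a_m\ast v$ when $m\ge 2$, and trivially $u\ast v=a_1\ast v$ when $m=1$; so $u\ast v=c\ast v$ for some $c\in\langle X\rangle_\ast$. Then I would unfold the right factor by iterating \eqref{false_distri}, obtaining
\[
u\ast v \;=\; c\ast(b_1\cdot b_2\cdot\ldots\cdot b_r) \;=\; (c\ast b_1)\cdot(b_1\ast b_2)\cdot(b_2\ast b_3)\cdot\ldots\cdot(b_{r-1}\ast b_r)
\]
(which reads $u\ast v=c\ast b_1$ when $r=1$). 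Since $\langle X\rangle_\ast$ is a subsemigroup of $(S,\ast)$, each factor on the right lies in $\langle X\rangle_\ast$, and hence $u\ast v\in D$. Therefore $D$ is a subalgebra, and the proof is complete.

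I do not expect a genuine obstacle: the whole argument rests on the identity \eqref{false_distri}, whose validity in an RTD APA is packaged in \cref{prop:rtd}, together with a one-line induction on $r$ — using only associativity of $\ast$ and \eqref{false_distri} — to justify the telescoped display above. The single point worth a moment's care is cosmetic: verifying that $c\ast b_1$ and each $b_{i-1}\ast b_i$ land back inside $\langle X\rangle_\ast$, not merely inside $S$, which is immediate from $\langle X\rangle_\ast$ being closed under $\ast$.
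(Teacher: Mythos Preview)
Your proof is correct and follows essentially the same approach as the paper: both arguments use \eqref{RTD} to discard all but the last factor on the left of $\ast$, and \eqref{false_distri} to break up a $\cdot$-product on the right of $\ast$ into a $\cdot$-product of $\ast$-words. The paper (via the analogue of \cref{lm:form}) packages this as an induction on the number of occurrences of $\cdot$ in a term for $w$, whereas you package it as the statement that $D$ is a subalgebra; your explicit telescoped formula $c\ast(b_1\cdots b_r)=(c\ast b_1)\cdot(b_1\ast b_2)\cdots(b_{r-1}\ast b_r)$ is in fact exactly the expression the paper uses immediately afterwards to define the operation $\otimes$ on $B(S)$.
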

    
    Let $(S,\ast)\in \mathcal{V}_{Q_1}$ and $(B(S),\cdot)$ be a semigroup generated by $S$. 
For $x= \displaystyle\prod_{i=1}^nx_i$ and $y= \displaystyle\prod_{j=1}^my_j$, with $x_1,\ldots,x_n,y_1,\ldots,y_m\in S$, let us define the following binary operation:
\begin{align*}
    x\otimes y:=(x_n\ast y_1)\cdot\prod_{j=2}^m(y_{j-1}\ast y_j).
\end{align*}
\begin{prop}
    $(B(S),\cdot,\otimes)$ is a right-trivially distributive APA.
    \begin{proof}
  Let $x=\displaystyle\prod_{i=1}^nx_i$, $y=\displaystyle\prod_{j=1}^my_j$, $z=\displaystyle\prod_{k=1}^pz_k\in B(S)$, with $x_1,\ldots,x_n,y_1,\ldots,y_m,z_1,\ldots,z_p\in S$. We will show that the operation $\otimes$ is associative. 
  \begin{align*}
      (x\otimes y)\otimes z
      &=\left((x_n\ast y_1)\cdot \prod_{j=2}^m(y_{j-1}\ast y_j)\right)\otimes \prod_{k=1}^pz_k=((y_{m-1}\ast y_m)\ast z_1)\cdot \prod_{k=2}^p(z_{k-1}\ast z_k)\\
      &=(y_m\ast z_1)\cdot \prod_{k=2}^p(z_{k-1}\ast z_{k})\\
      &=(x_n\ast( y_m\ast z_1))\cdot((y_m\ast z_1)\ast(z_1\ast z_2))\cdot \prod_{k=3}^p\left((z_{k-2}\ast z_{k-1})\ast(z_{k-1}\ast z_k)\right)\\
     &=\prod_{i=1}^nx_i\otimes\left((y_m\ast z_1)\cdot \prod_{k=2}^p(z_{k-1}\ast z_{k})\right)=x\otimes (y\otimes z).
       \end{align*}
  As a consequence, we also have:
  \begin{align*}
      y\otimes z&=
      (y_m\ast z_1)\cdot \prod_{k=2}^p(z_{k-1}\ast z_{k})=((y_{m-1}\ast y_m)\ast z_1)\cdot \prod_{k=2}^p(z_{k-1}\ast z_k)=
      x\otimes y\otimes z,
  \end{align*}
  which shows that $(B(S),\otimes)$ belongs to the variety $\mathcal{V}_{Q_1}$.
Further, \eqref{false_distri} follows by
\begin{align*}
      x\otimes (y\cdot z)=    
      &\left(\prod_{i=1}^nx_i\otimes \prod_{j=1}^my_j\right)\cdot \left(\prod_{j=1}^my_j\otimes \prod_{k=1}^p z_k\right)=(x\otimes y)\cdot (y\otimes z).
  \end{align*}
  By Proposition \ref{prop:rtd}, $(B(S),\cdot,\otimes)$ is right-trivially distributive APA.
    \end{proof}
\end{prop}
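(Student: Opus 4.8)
The plan is to verify the hypotheses of \cref{prop:rtd}: that $(B(S),\cdot)$ and $(B(S),\otimes)$ are semigroups, that $(B(S),\cdot,\otimes)$ satisfies \eqref{RTD}, that $(B(S),\otimes)\in\mathcal{V}_{Q_1}$, and that the twisted distributivity \eqref{false_distri} holds; the desired conclusion is then immediate. One preliminary remark: $x\otimes y$ is defined through a factorization $x=\prod x_i$, $y=\prod y_j$ into elements of $S$, so for $\otimes$ to be well-posed one should come with fixed factorizations; the cleanest setting — parallel to the use of free semigroups in \cref{th:LTDconst} — is to take $(B(S),\cdot)$ to be the free semigroup over the set $S$, where such a factorization is unique, and then $(B(S),\cdot)$ is a semigroup by construction.

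The core computation is the associativity of $\otimes$, and the guiding observation is that $x\otimes y$ is again a word of the same length as $y$, with letters $x_n\ast y_1,\; y_1\ast y_2,\;\dots,\;y_{m-1}\ast y_m$; in particular its last letter is $y_{m-1}\ast y_m$ (or $x_n\ast y_1$ when $m=1$), and in either case that letter $\ast\,z_1$ collapses to $y_m\ast z_1$ by the $Q_1$-identity $u\ast v\ast w=v\ast w$. Hence $(x\otimes y)\otimes z=(y_m\ast z_1)\cdot\prod_{k=2}^p(z_{k-1}\ast z_k)$, which is precisely $y\otimes z$. For the opposite bracketing one writes out $x\otimes(y\otimes z)$: its first letter is $x_n\ast(y_m\ast z_1)=y_m\ast z_1$, its second is $(y_m\ast z_1)\ast(z_1\ast z_2)$, and its remaining letters are $(z_{k-2}\ast z_{k-1})\ast(z_{k-1}\ast z_k)$; each of these ``overlap'' letters simplifies via associativity of $\ast$ and two uses of $Q_1$ — for instance $y_m\ast z_1\ast z_1\ast z_2=y_m\ast z_1\ast z_2=z_1\ast z_2$ — so that $x\otimes(y\otimes z)$ also equals $y\otimes z$. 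Thus $\otimes$ is associative; moreover the identity $x\otimes y\otimes z=y\otimes z$ just obtained is exactly the statement $(B(S),\otimes)\in\mathcal{V}_{Q_1}$. This is the step I expect to be the main obstacle: not deep, but requiring careful tracking of how the two words overlap after concatenation and repeated appeals to $Q_1$ to absorb the doubled letters.

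The remaining checks are pure bookkeeping. The factorization of $x\cdot y$ ends in $y_m$, so $(x\cdot y)\otimes z=(y_m\ast z_1)\cdot\prod_{k=2}^p(z_{k-1}\ast z_k)=y\otimes z$, which gives \eqref{RTD}. For \eqref{false_distri}, the factorization of $y\cdot z$ is the concatenation of those of $y$ and $z$, so $x\otimes(y\cdot z)$ is literally the concatenation of the words $(x_n\ast y_1)\cdot\prod_{j=2}^m(y_{j-1}\ast y_j)$ and $(y_m\ast z_1)\cdot\prod_{k=2}^p(z_{k-1}\ast z_k)$, that is, $(x\otimes y)\cdot(y\otimes z)$. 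Having established that both operations are associative, that \eqref{RTD} holds, that $(B(S),\otimes)\in\mathcal{V}_{Q_1}$, and that \eqref{false_distri} holds, \cref{prop:rtd} yields that $(B(S),\cdot,\otimes)$ is a right-trivially distributive APA.
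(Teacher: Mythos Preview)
Your proposal is correct and follows essentially the same route as the paper: both arguments compute $(x\otimes y)\otimes z$ and $x\otimes(y\otimes z)$ letter-by-letter, reduce each to $(y_m\ast z_1)\cdot\prod_{k=2}^p(z_{k-1}\ast z_k)$ via repeated use of the $Q_1$-identity, read off that $(B(S),\otimes)\in\mathcal{V}_{Q_1}$, verify \eqref{false_distri} by direct concatenation, and finish with \cref{prop:rtd}. Your write-up is in fact slightly more careful than the paper's in two places: you flag the well-definedness issue for $\otimes$ (and resolve it by taking $(B(S),\cdot)$ free on $S$, which is indeed the intended setting in light of the subsequent free-object theorem), and you check \eqref{RTD} explicitly, which \cref{prop:rtd} formally requires as a hypothesis but the paper leaves implicit.
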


\smallskip

Let $X$ be a non-empty set, $\left(\mathcal{B}(X), \, \ast \right)$ the free semigroup in the variety $\mathcal{V}_{Q_1}$ over the set $\mathcal{X}$, $\mathcal{S}$ a variety of semigroups $(S,\cdot)$, and let $F_{\mathcal{S}}\left(\mathcal{B}(X)\right)$ be the free semigroup in $\mathcal{S}$ generated by the set $\mathcal{B}(X)$. 
\begin{theor}
    The algebra $(F_{\mathcal{S}}\left(\mathcal{B}(X)\right),\cdot,\otimes)$ is free in the variety of all right-trivially distributive APAs $(S,\cdot,\ast$) over the set $\mathcal{X}$ such that $(S,\cdot)\in \mathcal{S}$.
    \begin{proof}
        The proof goes exactly in the same way as the proof of Theorem \ref{th:LTDconst}.
    \end{proof}
      \end{theor}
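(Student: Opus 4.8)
The plan is to mimic the proof of Theorem~\ref{th:LTDconst} almost verbatim, replacing the variety $\mathcal{V}_{P_2}$ by $\mathcal{V}_{Q_1}$, the free object $\left(\mathcal{A}_2fin(X),\ast\right)$ by $\left(\mathcal{B}(X),\ast\right)$, and the operation $\otimes$ by the right-trivially distributive analogue defined just above. The key structural fact that makes this work is that, by Lemma~\ref{lm:form1}, every element of a right-trivially distributive APA generated by $X$ can be written as a $\cdot$-product of elements of $\langle X\rangle_\ast$, exactly as in the left-trivially distributive case; hence the ``shape'' of the free object is the same, namely $F_{\mathcal{S}}\!\left(\mathcal{B}(X)\right)$ equipped with $\cdot$ and $\otimes$, and the only thing to check is the universal mapping property.

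First I would fix a right-trivially distributive APA $(C,\cdot,\ast)$ with $(C,\cdot)\in\mathcal{S}$ and a map $h\colon\mathcal{X}\to C$, $h((0,x))=c_x$. Since $\left(\mathcal{B}(X),\ast\right)$ is free in $\mathcal{V}_{Q_1}$ over $\mathcal{X}$ (proved earlier in the excerpt) and $(C,\ast)\in\mathcal{V}_{Q_1}$ by Theorem~\ref{lemma_q1}, $h$ extends uniquely to a semigroup homomorphism $\overline{h}\colon\left(\mathcal{B}(X),\ast\right)\to(C,\ast)$. Then, since $F_{\mathcal{S}}\!\left(\mathcal{B}(X)\right)$ is free in $\mathcal{S}$ over the set $\mathcal{B}(X)$, I extend $\overline{h}$ uniquely to a semigroup homomorphism $\overline{\overline{h}}\colon\left(F_{\mathcal{S}}(\mathcal{B}(X)),\cdot\right)\to(C,\cdot)$, so that for $a=\prod_{i=1}^n a_i$ with $a_i\in\mathcal{B}(X)$ one has $\overline{\overline{h}}(a)=\prod_{i=1}^n\overline{h}(a_i)$.

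The heart of the proof is verifying that $\overline{\overline{h}}$ is compatible with $\otimes$, i.e.\ $\overline{\overline{h}}(a\otimes b)=\overline{\overline{h}}(a)\ast\overline{\overline{h}}(b)$ for $a=\prod_{i=1}^n a_i$, $b=\prod_{j=1}^m b_j$. Unwinding the definition $a\otimes b=(a_n\ast b_1)\cdot\prod_{j=2}^m(b_{j-1}\ast b_j)$, applying $\overline{\overline{h}}$ across the $\cdot$-product and then $\overline{h}$ on each factor gives $\left(\overline{h}(a_n)\ast\overline{h}(b_1)\right)\ast\prod_{j=2}^m\left(\overline{h}(b_{j-1})\ast\overline{h}(b_j)\right)$; on the other side $\overline{\overline{h}}(a)\ast\overline{\overline{h}}(b)=\left(\prod_{i}\overline{h}(a_i)\right)\ast\left(\prod_j\overline{h}(b_j)\right)$, and these are equal in $(C,\ast)$ precisely because $(C,\ast)\in\mathcal{V}_{Q_1}$: repeatedly using $u\ast v^{\,}\ast w=v\ast w$ collapses the left factors, mirroring the computation in the construction proposition just above for $(B(S),\cdot,\otimes)$. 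Finally $\overline{\overline{h}}_{/\mathcal{X}}=h$ since on $(0,x)$ it returns $\overline{h}((0,x))=c_x$, and uniqueness follows because $\mathcal{X}$ generates $F_{\mathcal{S}}\!\left(\mathcal{B}(X)\right)$ under $\cdot$ and $\otimes$ (by Lemma~\ref{lm:form1}) while $\overline{\overline{h}}$ is forced on generators and is a homomorphism for both operations.

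The only genuine obstacle is the bookkeeping in the $\otimes$-compatibility step: one must be careful that $\overline{h}$, being only a $\ast$-homomorphism rather than defined by a clean product formula on all of $\mathcal{B}(X)$, still produces the collapsing behaviour needed, and that the edge cases $m=1$ (so $b$ is a single generator) and $n=1$ are handled. Since all of this is the verbatim mirror of the left-trivially distributive argument — with $\mathcal{V}_{P_2}$ right-normality replaced by the $\mathcal{V}_{Q_1}$ identity and Proposition~\ref{prop:APAV2} replaced by Proposition~\ref{prop:rtd} — it is legitimate, as the authors do, to simply say the proof proceeds exactly as in Theorem~\ref{th:LTDconst}.
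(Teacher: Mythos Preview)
Your proposal is correct and is precisely the expansion of the paper's one-line proof, which simply invokes the argument of Theorem~\ref{th:LTDconst} with $\mathcal{V}_{P_2}$, $\mathcal{A}_2fin(X)$, and Proposition~\ref{prop:APAV2} replaced by $\mathcal{V}_{Q_1}$, $\mathcal{B}(X)$, and Proposition~\ref{prop:rtd}. Two small corrections: in your $\otimes$-compatibility display the connective between the factors should be $\cdot$, not $\ast$ (i.e.\ $\bigl(\overline{h}(a_n)\ast\overline{h}(b_1)\bigr)\cdot\prod_{j=2}^m\bigl(\overline{h}(b_{j-1})\ast\overline{h}(b_j)\bigr)$), and the identity you actually use on the other side is not just $(Q_1)$ in $(C,\ast)$ but rather \eqref{RTD} together with \eqref{false_distri} in the APA $(C,\cdot,\ast)$.
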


      \smallskip

  \begin{ex}
   Let $(\mathcal{B}(\{x\}),\ast)=(\{x,x^2\},\ast)$ be the $\mathcal{V}_{Q_1}$-semigroup from Example \ref{ex:Q} and consider the free band $\left(S(\mathcal{B}(\{x\})),\cdot\right)$ generated by the set $\{x,x^2\}$. Then $$S(\mathcal{B}(\{x\}))=\{x,x^2,x\cdot x^2, x^2\cdot x, x\cdot x^2\cdot x, x^2\cdot x\cdot x^2\}$$ and $\left(S(\mathcal{B}(\{x\})),\cdot,\otimes\right)$ is an APA with $\theta_a(b)=x^2$, for all $a,b\in S(\mathcal{B}(\{x\}))$.     
  \end{ex}

\smallskip

\section{Some classes of right-trivially distributive APAs} 
In this section, we present some families of right-trivially distributive APAs.

\smallskip

\subsection{APA \texorpdfstring{$(S,\cdot,\ast)$}{} such that \texorpdfstring{$(S, \cdot)$}{} has a right annihilator}
We consider APAs $(S, \cdot, \ast)$ such that the semigroup $(S, \cdot)$ has at least one a right annihilator, i.e. there is an element $R\in S$ such that for all $x\in S$, $x\cdot R=R$. Let
\begin{align*}
\Ann_r(S, \cdot):=\{R \in S\, \mid \,  \,  \text{$R$ is a right annihilator in $(S, \cdot)$}\}.
\end{align*}

\smallskip

We introduce the following preparatory lemma.
\begin{lemma}\label{lemma_r_ann}
    Let $(S,\cdot, \ast)$ be an APA and $R\in \Ann_r(S, \cdot)$. Then for all $x \in S$:
    \begin{enumerate}
      \item $\theta_x=\theta_{R \cdot x}$,
      \item $\theta_x=\theta_R\theta_x\theta_x$,
      \item $\theta_x\theta_R=\theta_R\theta_R$.
  \end{enumerate}   
    \begin{proof}
  Let $x \in S$. Clearly, $R\cdot x, R\in \E(S,\cdot)$. By \cref{lm:cubic}-2., $\theta_{R\cdot x}\underset{ \eqref{rel}}{=} \theta_R\theta_{R\cdot x}\theta_{R\cdot x}=\theta_R\theta_{R}\theta_{R\cdot x}$ and so
\begin{align*}
    \theta_x\underset{\eqref{rel}}{=}\theta_R\theta_x\theta_{R \cdot x}=\theta_R\theta_x\theta_R\theta_R\theta_{R \cdot x}=\theta_R\theta_R\theta_{R \cdot x}=\theta_{R \cdot x}.
\end{align*}
Thus $\theta_x=\theta_R\theta_x\theta_{R \cdot x}=\theta_R\theta_x\theta_x.$
Moreover, by \cref{lm:cubic}-3., $\theta_x\theta_R=\theta_{R \cdot x}\theta_R=\theta_R\theta_R$.
    \end{proof}
\end{lemma}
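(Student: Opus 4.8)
The plan is to exploit that $R$ is an idempotent of $(S,\cdot)$ --- indeed $R\cdot R=R$ since $R$ is a right annihilator --- and that, for every $x\in S$, the product $R\cdot x$ is idempotent as well, because $(R\cdot x)\cdot(R\cdot x)=R\cdot(x\cdot R)\cdot x=R\cdot R\cdot x=R\cdot x$, using $x\cdot R=R$. This makes the full strength of \cref{lm:cubic} available for the two idempotents $R$ and $R\cdot x$, which, combined with the universal relations \eqref{rel} and \eqref{rel_4} and with the trivial identity $\theta_{x\cdot R}=\theta_R$, should suffice.

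First I would prove item~3, taking care to do so \emph{without} invoking item~1, so as to avoid circularity. Applying \eqref{rel_4} to the triple $(R,x,R)$ gives $\theta_{R\cdot x}\,\theta_R=\theta_x\,\theta_R\,\theta_R\,\theta_{x\cdot R}=\theta_x\,\theta_R\,\theta_R\,\theta_R=\theta_x\,\theta_R$, the last step being \cref{lm:cubic}-1 for the idempotent $R$. On the other hand \cref{lm:cubic}-3 with $e=R$ gives $\theta_{R\cdot x}\,\theta_R=\theta_R\,\theta_R$. Comparing, $\theta_x\,\theta_R=\theta_R\,\theta_R$, which is item~3.

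Next, for item~1, I would start from \eqref{rel} with $y=R$, namely $\theta_x=\theta_R\,\theta_x\,\theta_{R\cdot x}$, and separately observe that the same relation applied to $R\cdot x$ (using $R\cdot(R\cdot x)=R\cdot x$) together with \cref{lm:cubic}-2 yields $\theta_{R\cdot x}=\theta_R\,\theta_{R\cdot x}\,\theta_{R\cdot x}=\theta_R\,\theta_R\,\theta_{R\cdot x}$. Substituting this into the trailing factor of the first identity and inserting item~3 in the form $\theta_x\,\theta_R=\theta_R\,\theta_R$, the string $\theta_R\,\theta_x\,\theta_R\,\theta_R\,\theta_{R\cdot x}$ collapses --- via $\theta_R^{4}=\theta_R^{2}$, which follows from \cref{lm:cubic}-1 --- to $\theta_R\,\theta_R\,\theta_{R\cdot x}=\theta_{R\cdot x}$; hence $\theta_x=\theta_{R\cdot x}$, i.e.\ item~1. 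Item~2 is then immediate: feeding item~1 back into $\theta_x=\theta_R\,\theta_x\,\theta_{R\cdot x}$ turns $\theta_{R\cdot x}$ into $\theta_x$, giving $\theta_x=\theta_R\,\theta_x\,\theta_x$.

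The only genuinely delicate point is the order of the argument: the natural-looking chain for item~1 secretly relies on item~3, so the main obstacle is organizational --- establishing item~3 on its own first. Once that is arranged, everything else is a routine succession of substitutions using \eqref{rel}, \eqref{rel_4} and the cubic identities of \cref{lm:cubic} for the idempotents $R$ and $R\cdot x$.
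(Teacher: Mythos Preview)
Your proof is correct. The ingredients are the same as the paper's --- \eqref{rel}, \eqref{rel_4}, and \cref{lm:cubic} applied to the idempotents $R$ and $R\cdot x$ --- but the organization differs. You establish item~3 first via \eqref{rel_4} on $(R,x,R)$, then feed it into item~1; the paper instead proves item~1 directly and only afterwards deduces item~3 from item~1 and \cref{lm:cubic}-3. The circularity you worried about does not actually arise in the paper's argument: the step $\theta_R\theta_x\theta_R\theta_R\theta_{R\cdot x}=\theta_R\theta_R\theta_{R\cdot x}$ is justified not by item~3 but by the instance $\theta_R=\theta_x\theta_R\theta_{x\cdot R}=\theta_x\theta_R\theta_R$ of \eqref{rel}, which collapses $\theta_x\theta_R\theta_R$ to $\theta_R$ without any appeal to item~3. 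So both orderings work; yours makes the dependency structure more transparent, while the paper's is slightly shorter by avoiding the detour through \eqref{rel_4}.
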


\smallskip

\begin{theor}
    Let $(S,\cdot, \ast)$ be an APA such that $\Ann_r(S, \cdot)$ is non-empty. Then $(S, \cdot, \ast)$ is right-trivially distributive.
        \begin{proof}
  Let $x, y \in S$ and $R \in \Ann_r(S, \cdot)$. Then we have:
    \begin{align*}
           \theta_x\theta_y\underset{\ref{lemma_r_ann}-1.}{=}\theta_{R \cdot x}\theta_y\underset{\eqref{rel_4}}{=} \theta_x\theta_y\theta_R\theta_{x \cdot y} \underset{\ref{lemma_r_ann}-3.}{=}\theta_x\theta_R\theta_R\theta_{x \cdot y}\underset{\ref{lemma_r_ann}-3.}{=}\theta_R\theta_R\theta_R\theta_{x \cdot y}\underset{\ref{lm:cubic}-1.}{=}\theta_R\theta_{x \cdot y}. 
       \end{align*} 
 Hence,  we obtain    
     $ \theta_y\underset{\eqref{rel}}{=}\theta_x\theta_y\theta_{x \cdot y}=\theta_R\theta_{x \cdot y}\theta_{x \cdot y}\underset{\ref{lemma_r_ann}-2.}{=}\theta_{x \cdot y}$,
thus $(S, \cdot, \ast)$ is RTD. 
    \end{proof}
\end{theor}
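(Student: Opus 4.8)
The plan is to reduce everything to showing $\theta_{x\cdot y}=\theta_y$ for all $x,y\in S$: this says precisely $(x\cdot y)\ast z=y\ast z$, i.e. that $(S,\cdot,\ast)$ is right-trivially distributive (and, by \cref{lemma_q1}, equivalently that $(S,\ast)\in\mathcal{V}_{Q_1}$). Throughout I would fix $R\in\Ann_r(S,\cdot)$ and keep in mind that $R$ and each $R\cdot x$ lie in $\E(S,\cdot)$, so that \cref{lm:cubic} applies to the maps $\theta_R$ and $\theta_{R\cdot x}$, and that all three parts of \cref{lemma_r_ann} are at our disposal.

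The heart of the argument is the intermediate identity $\theta_x\theta_y=\theta_R\theta_{x\cdot y}$, valid for all $x,y\in S$. To get it, I would start from \cref{lemma_r_ann}(1) to replace $\theta_x$ by $\theta_{R\cdot x}$, then feed the factorization $R\cdot x$ into \eqref{rel_4} (with $z=y$), obtaining $\theta_{R\cdot x}\theta_y=\theta_x\theta_y\theta_R\theta_{x\cdot y}$. Now \cref{lemma_r_ann}(3), which asserts $\theta_z\theta_R=\theta_R\theta_R$ for every $z\in S$, is applied twice — first to collapse the block $\theta_y\theta_R$, then the block $\theta_x\theta_R$ — yielding $\theta_R\theta_R\theta_R\theta_{x\cdot y}$; finally \cref{lm:cubic}(1) gives $\theta_R\theta_R\theta_R=\theta_R$, so the whole string telescopes to $\theta_R\theta_{x\cdot y}$.

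With that in hand the conclusion is one line: by \eqref{rel} we have $\theta_y=\theta_x\theta_y\theta_{x\cdot y}$, and substituting $\theta_x\theta_y=\theta_R\theta_{x\cdot y}$ turns this into $\theta_y=\theta_R\theta_{x\cdot y}\theta_{x\cdot y}$, which by \cref{lemma_r_ann}(2) applied to the element $x\cdot y$ equals $\theta_{x\cdot y}$. Hence $\theta_{x\cdot y}=\theta_y$ for all $x,y\in S$, and reading this off as an identity between the operations $\cdot$ and $\ast$ gives $(S,\cdot,\ast)$ right-trivially distributive.

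I do not expect a real obstacle: all the structural work is already encapsulated in \cref{lemma_r_ann}, and the only thing requiring care is bookkeeping — choosing the correct factorization (namely $R\cdot x$) when invoking \eqref{rel_4}, and applying parts (1)--(3) of \cref{lemma_r_ann} in the right order so that the product of $\theta$'s actually collapses to $\theta_R\theta_{x\cdot y}$. One could alternatively finish through \eqref{p_two} together with \cref{lemma_q1}, but routing the last step directly through \eqref{rel} is shorter.
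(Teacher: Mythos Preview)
Your proposal is correct and follows essentially the same argument as the paper: first derive $\theta_x\theta_y=\theta_R\theta_{x\cdot y}$ via \cref{lemma_r_ann}(1), \eqref{rel_4}, two applications of \cref{lemma_r_ann}(3), and \cref{lm:cubic}(1), then plug this into \eqref{rel} and finish with \cref{lemma_r_ann}(2). The steps, the lemmas used, and even their order coincide with the paper's proof.
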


\smallskip

\subsection{APA \texorpdfstring{$(S,\cdot,\ast)$}{} such that \texorpdfstring{$(S, \cdot)$}{} has a left identity}

We classify APA $(S, \cdot, \ast)$ such that the semigroup $(S, \cdot)$ has a left identity, i.e., there exists $e\in S$ such that $e \cdot x=x$, for all $x \in S$.

\begin{lemma}\label{lemma_leftid}
    Let $(S,\cdot, \ast)$ be an APA and $e\in S$ a left identity for $(S, \cdot)$. Then, for all $x \in S$, the following hold:
    \begin{enumerate}
    \item $\theta_e\theta_x=\theta_x\theta_x$,
    \item $\theta_x=\theta_e\theta_e\theta_x$,
        \item $\theta_x\theta_e=\theta_e\theta_e$.
    \end{enumerate}
\end{lemma}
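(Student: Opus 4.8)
The plan is to derive all three statements from \cref{lm:cubic} applied to $e$, exploiting that a left identity is automatically idempotent. First I would record that $e\cdot e=e$, so $e\in\E(S,\cdot)$, and that $e\cdot x=x$ for every $x\in S$; this is the only feature of the hypothesis that gets used, and it is exactly what makes the expressions $\theta_{e\cdot x}$ appearing in \cref{lm:cubic} simplify.

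For statements 1 and 3 there is essentially nothing left to do. \cref{lm:cubic}-2. gives $\theta_e\theta_{e\cdot x}=\theta_{e\cdot x}\theta_{e\cdot x}$, and substituting $e\cdot x=x$ yields $\theta_e\theta_x=\theta_x\theta_x$, which is statement 1. Likewise \cref{lm:cubic}-3. gives $\theta_{e\cdot x}\theta_e=\theta_e\theta_e$, and the same substitution yields $\theta_x\theta_e=\theta_e\theta_e$, which is statement 3.

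For statement 2 I would start from \eqref{rel} with $y=e$, obtaining $\theta_x=\theta_e\theta_x\theta_{e\cdot x}=\theta_e\theta_x\theta_x$ after using $e\cdot x=x$ once more. Replacing the trailing $\theta_x\theta_x$ by $\theta_e\theta_x$ via statement 1 then gives $\theta_x=\theta_e(\theta_e\theta_x)=\theta_e\theta_e\theta_x$, as desired.

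The proof has no real obstacle: the one point requiring care is noticing that the left-identity hypothesis simultaneously makes $e$ idempotent (so that \cref{lm:cubic} is available) and trivializes every occurrence of $e\cdot x$; after that it is a two-line computation, entirely parallel to (indeed the mirror of) \cref{lemma_commute}. I would expect the write-up to be as terse as that one, essentially just citing \cref{lm:cubic} and \eqref{rel}.
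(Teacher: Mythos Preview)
Your proposal is correct and follows essentially the same route as the paper: items 1 and 3 are obtained directly from \cref{lm:cubic} (parts 2 and 3) after observing $e\in\E(S,\cdot)$ and $e\cdot x=x$, and item 2 is derived from \eqref{rel} with $y=e$ followed by an application of item 1. The paper's write-up is indeed as terse as you anticipated.
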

\begin{proof}
  Note that 1. and 3. follow directly by Lemma \ref{lm:cubic}, since $e\in \E(S,\cdot)$. 
   Let $x \in S$. 
Then
$\theta_x\underset{\eqref{rel}}{=}\theta_e\theta_x\theta_{x}\underset{1.}{=}\theta_e\theta_e\theta_x,
$
which finishes the proof.
\end{proof}


\begin{theor}
    Let $(S,\cdot, \ast)$ be an APA and assume that $(S, \cdot)$ admits a left identity. Then $(S,\cdot, \ast)$ is right-trivially distributive.
    \begin{proof}
        For $x, y \in S$, we have
           $\theta_x\theta_y=\theta_{e \cdot x}\theta_y \underset{\eqref{rel_4}}{=} \theta_x\theta_y\theta_e\theta_{x \cdot y} \underset{\ref{lemma_leftid}-3.}{=} \theta_x\theta_e\theta_e\theta_{x \cdot y} \underset{\ref{lemma_leftid}-2.}{=} \theta_x\theta_{x \cdot y}.$
As a consequence,  we obtain    
  \begin{align*}
      \theta_y&\underset{\eqref{rel}}{=}\theta_x\theta_y\theta_{x \cdot y}=\theta_x\theta_{x \cdot y}\theta_{x \cdot y}\underset{\ref{lemma_leftid}-1.}{=} \theta_x\theta_{e}\theta_{x \cdot y} \underset{\ref{lemma_leftid}-3.}{=} \theta_e\theta_e\theta_{x \cdot y}=\theta_{x \cdot y},
  \end{align*}
i.e., $(S, \cdot, \ast)$ is RTD. 
    \end{proof}
\end{theor}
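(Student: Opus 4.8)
The plan is to fix a left identity $e$ of $(S,\cdot)$ and reduce the statement to showing $\theta_{x\cdot y}=\theta_y$ for all $x,y\in S$, which is exactly right-trivial distributivity. Since $e\cdot e=e$, we have $e\in\E(S,\cdot)$, so both \cref{lm:cubic} and the preparatory \cref{lemma_leftid} apply; recall in particular that \cref{lemma_leftid} gives, for every $x\in S$, the identities (1) $\theta_e\theta_x=\theta_x\theta_x$, (2) $\theta_x=\theta_e\theta_e\theta_x$, and (3) $\theta_x\theta_e=\theta_e\theta_e$. The entire argument is then a sequence of rewrites using these together with \eqref{rel} and \eqref{rel_4}.

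First I would produce a workable form of $\theta_x\theta_y$. Writing $x=e\cdot x$ and applying \eqref{rel_4} to the triple $(e,x,y)$ gives $\theta_x\theta_y=\theta_{e\cdot x}\theta_y=\theta_x\theta_y\theta_e\theta_{x\cdot y}$. Now I would collapse the inner factors: part (3) of \cref{lemma_leftid} turns $\theta_y\theta_e$ into $\theta_e\theta_e$, and then part (2), applied with $x\cdot y$ in place of $x$, turns $\theta_e\theta_e\theta_{x\cdot y}$ into $\theta_{x\cdot y}$. This yields the key intermediate identity $\theta_x\theta_y=\theta_x\theta_{x\cdot y}$.

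Next I would substitute this into \eqref{rel}: $\theta_y=\theta_x\theta_y\theta_{x\cdot y}=\theta_x\theta_{x\cdot y}\theta_{x\cdot y}$. Applying part (1) of \cref{lemma_leftid} in the form $\theta_{x\cdot y}\theta_{x\cdot y}=\theta_e\theta_{x\cdot y}$, then part (3) in the form $\theta_x\theta_e=\theta_e\theta_e$, and finally part (2), I obtain $\theta_y=\theta_x\theta_e\theta_{x\cdot y}=\theta_e\theta_e\theta_{x\cdot y}=\theta_{x\cdot y}$, as desired.

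Each step is a one-line rewrite once \cref{lemma_leftid} is available, so there is no real computational difficulty; the only point that needs care — and where one can get stuck — is recognizing that one should first extract $\theta_x\theta_y=\theta_x\theta_{x\cdot y}$ from \eqref{rel_4} and only then invoke \eqref{rel}, and that the three lemma identities must be applied in the ``collapsing'' direction (e.g.\ $\theta_a\theta_a=\theta_e\theta_a$, $\theta_a\theta_e=\theta_e\theta_e$) rather than as expansions. In contrast with the left-annihilator analogue, no passage through \cref{thm:LTDAPA} or a $P_k$-type identity is required here, since the relations collapse directly to $\theta_{x\cdot y}=\theta_y$.
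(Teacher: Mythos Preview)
Your argument is correct and follows the paper's proof essentially step for step: you first derive $\theta_x\theta_y=\theta_x\theta_{x\cdot y}$ from \eqref{rel_4} together with parts (3) and (2) of \cref{lemma_leftid}, and then feed this into \eqref{rel} and collapse via parts (1), (3), (2) to obtain $\theta_y=\theta_{x\cdot y}$. The only cosmetic difference is that you spell out explicitly which instance of each identity is being used, but the sequence of rewrites is identical.
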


\smallskip

\subsection{Some classes of APAs \texorpdfstring{$(S,\cdot,\ast)$}{} such that \texorpdfstring{$(S, \cdot)$}{} is a Clifford semigroup}

\smallskip

Clearly, if $(S, \cdot)$ is a group, by \cref{monoid}, $(S,\ast)$ is determined by $\theta_1$.  
In a more general setting, assume that $(S, \cdot)$ is a Clifford semigroup, that is, a completely regular inverse semigroup. 

In \cite{MaPeSt24}, there is a classification of pentagon algebras $(S, \cdot, \ast)$ arising from Clifford semigroups $(S, \cdot)$ which additionally are \emph{$\E(S, \cdot)$-invariant}, namely $x \ast e= x \ast f$, for all $x\in S$ and $e, f \in \E(S, \cdot)$. We can prove the following.

\begin{prop}
    Let $(S, \cdot)$ be a Clifford semigroup and $(S, \cdot, \ast)$ an $\E(S, \cdot)$-invariant APA.
    Then $(S,\ast)$ is determined by $\theta_e$, with $e \in \E(S, \cdot)$. 
    \begin{proof}
        By \cite[Lemma 12]{MaPeSt24}, $\theta_e=\theta_f$ and $\theta_e\theta_x=\theta_e$, for all $x \in S$ and $e, f \in \E(S, \cdot)$. Hence, 
        $\theta_x\underset{\eqref{rel}}{=}\theta_e\theta_x\theta_{e \cdot x}=\theta_e\theta_{e \cdot x}=\theta_e$, for each $x \in S$.\\
    \end{proof}
\end{prop}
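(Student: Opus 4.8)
The plan is to show that $\theta_x=\theta_e$ for every $x\in S$, where $e$ is any fixed idempotent of $(S,\cdot)$ (such an $e$ exists since a Clifford semigroup is a semilattice of groups, hence has idempotents). Everything will be driven by the identity \eqref{rel}, namely $\theta_x=\theta_y\theta_x\theta_{y\cdot x}$, taken with $y=e$, which gives $\theta_x=\theta_e\theta_x\theta_{e\cdot x}$. So it suffices to establish two facts about left translations by idempotents: (a) $\theta_e=\theta_f$ for all $e,f\in\E(S,\cdot)$; and (b) the absorption law $\theta_e\theta_x=\theta_e$ for all $x\in S$ and $e\in\E(S,\cdot)$. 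Granting (a) and (b), one plugs (b) into the above to get $\theta_x=\theta_e\theta_{e\cdot x}$, then applies (b) once more with $e\cdot x$ in place of $x$ to conclude $\theta_x=\theta_e$; thus $(S,\ast)$ is determined by $\theta_e$, and by \cref{ex:gamma} this $\theta_e$ is automatically an idempotent endomorphism of $(S,\cdot)$.

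For (a), I would argue directly from \cref{lm:cubic}-4, which says $\theta_e=\theta_{e\cdot x\cdot e}$ for every $x\in S$. Taking $x=f$ and using that idempotents of a Clifford semigroup commute, $e\cdot f\cdot e=e\cdot f$, so $\theta_e=\theta_{e\cdot f}$; swapping the roles of $e$ and $f$ gives $\theta_f=\theta_{f\cdot e}=\theta_{e\cdot f}$, whence $\theta_e=\theta_f$. Write $\gamma$ for this common map. I would also record here that $\gamma^2=\gamma$: indeed $e\ast e\in\E(S,\cdot)$ by the earlier Proposition asserting $(x\cdot e)\ast e\in\E(S,\cdot)$ (applied with $x=e$), and $\theta_e\theta_e=\theta_{\theta_e(e)}=\theta_{e\ast e}$ by \eqref{assoc}, so $\gamma^2=\theta_{e\ast e}=\gamma$ by (a).

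The hard part will be (b). The clean route is to invoke the structural description of $\E(S,\cdot)$-invariant pentagon algebras over Clifford semigroups from \cite{MaPeSt24}, from which $\theta_e\theta_x=\theta_e$ falls out; this is precisely the step where $\E(S,\cdot)$-invariance is essential, since Clifford-ness alone only yields weaker, ``mirror-shaped'' relations. If instead one wants a self-contained derivation, I would exploit that every $x$ in a Clifford semigroup factors as $x=e_x\cdot x$ with $e_x:=xx^{-1}\in\E(S,\cdot)$; then parts 2 and 3 of \cref{lm:cubic} applied with $e=e_x$ give $\gamma\theta_x=\theta_x\theta_x$ and $\theta_x\gamma=\gamma$, and feeding $y=e_x$ into \eqref{rel} gives $\theta_x=\gamma\theta_x\theta_x$, hence $\theta_x^3=\theta_x$ and then $\theta_x^2=\theta_x$. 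The remaining, delicate point is to use $\E(S,\cdot)$-invariance to collapse $\theta_x^2$ all the way down to $\gamma$, which is where the interaction between the two operations $\cdot$ and $\ast$ is genuinely needed; I expect this collapse to be the only real obstacle, the rest being bookkeeping with \eqref{rel} and \eqref{assoc}.
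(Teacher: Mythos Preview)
Your proposal is correct and follows the same route as the paper: both reduce to the two facts (a) $\theta_e=\theta_f$ for $e,f\in\E(S,\cdot)$ and (b) $\theta_e\theta_x=\theta_e$, plug these into \eqref{rel} with $y=e$, and conclude; and for (b) both simply cite \cite[Lemma 12]{MaPeSt24}. Your self-contained derivation of (a) from \cref{lm:cubic}-4 and centrality of idempotents in a Clifford semigroup is a nice extra that the paper does not include (the paper just cites it), and your observation that $\gamma^2=\gamma$ follows from $e\ast e\in\E(S,\cdot)$ together with (a) and \eqref{assoc} is also sound.

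One small remark on the exploratory self-contained attempt at (b): the inference ``$\theta_x^3=\theta_x$ and then $\theta_x^2=\theta_x$'' is not a consequence of $\theta_x^3=\theta_x$ alone. What actually gives $\theta_x^2=\theta_x$ is combining $\theta_x=\gamma\theta_x^2$ from \eqref{rel} with $\gamma\theta_x=\theta_x^2$ from \cref{lm:cubic}-2 and $\gamma^2=\gamma$, so that $\theta_x=\gamma(\gamma\theta_x)=\gamma\theta_x=\theta_x^2$. You are right that the remaining collapse $\theta_x^2\to\gamma$ is exactly where $\E(S,\cdot)$-invariance enters and is the genuine content of the cited lemma; since you already identify this as the hard step and fall back on \cite{MaPeSt24}, the argument as a whole stands.
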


Following \cite[Definition 17]{MaPeSt24}, a pentagon algebra $(S, \cdot, \ast)$ is \emph{$\E(S, \cdot)$-fixed} if $x \ast e=e $, for all $x \in S$ and $e\in \E(S, \cdot)$. In particular, every idempotent of $(S, \cdot)$ is a right-annihilator for $(S, \ast)$.

\begin{theor}
   Let $(S, \cdot)$ be a Clifford semigroup and let $(S, \cdot, \ast)$ be an $\E(S, \cdot)$-fixed APA. Then $(S,\cdot, \ast)$ is right-trivially distributive.
\end{theor}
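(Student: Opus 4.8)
The plan is to prove something slightly stronger, namely that $(S,\ast)$ is determined by a single map: I will exhibit $\sigma\colon S\to S$ with $\theta_y=\sigma$ for every $y\in S$. Once this is established, $\theta_{x\cdot y}=\sigma=\theta_y$ for all $x,y\in S$, which is precisely the identity $(x\cdot y)\ast z=y\ast z$ defining \eqref{RTD}; in fact, since $(S,\ast)$ is then determined by $\sigma$, \cref{ex:gamma} forces $\sigma\in\End(S,\cdot)$ with $\sigma^2=\sigma$, and \cref{prop:both} gives that $(S,\cdot,\ast)$ is both left- and right-trivially distributive.

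Two facts will carry most of the work. (a) In a Clifford semigroup the idempotents are central, and every $y\in S$ has an inverse $y^{-1}$ with $e_y:=yy^{-1}=y^{-1}y\in\E(S,\cdot)$ and $e_y y=y e_y=y$, so in particular $e_y\cdot y\cdot e_y=y$. (b) Being $\E(S,\cdot)$-fixed means $\theta_x(e)=x\ast e=e$ for all $x\in S$ and $e\in\E(S,\cdot)$, whence, by \eqref{assoc}, $\theta_x\theta_e=\theta_{\theta_x(e)}=\theta_e$ for every $x\in S$ and every $e\in\E(S,\cdot)$.

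The first step is to show that all idempotents of $(S,\cdot)$ induce the same left translation. Let $e,f\in\E(S,\cdot)$; since idempotents commute, $fe=ef\in\E(S,\cdot)$. Taking $x=e$, $y=f$ in \eqref{rel} gives $\theta_e=\theta_f\theta_e\theta_{fe}$, and using (b) twice — with $e$ to rewrite $\theta_f\theta_e$ as $\theta_e$, and with $fe$ to rewrite $\theta_e\theta_{fe}$ as $\theta_{fe}$ — this collapses to $\theta_e=\theta_{fe}$. Swapping $e$ and $f$ gives $\theta_f=\theta_{ef}=\theta_{fe}$, so $\theta_e=\theta_f=:\sigma$. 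The second step then follows at once by applying \cref{lm:cubic}-4 with the idempotent $e_y$ and the element $y$: since $e_y\cdot y\cdot e_y=y$ by (a), we obtain $\theta_y=\theta_{e_y\cdot y\cdot e_y}=\theta_{e_y}=\sigma$, as wanted.

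The only delicate point I foresee is fact (a): in an arbitrary inverse semigroup $y\cdot(yy^{-1})$ need not equal $y$, so the identity $e_y\cdot y\cdot e_y=y$ genuinely relies on the Clifford hypothesis (equivalently, on $yy^{-1}=y^{-1}y$ together with centrality of idempotents). Everything else is routine bookkeeping with \eqref{rel}, \eqref{assoc} and \cref{lm:cubic}.
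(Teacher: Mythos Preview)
Your proof is correct, but it takes a different route from the paper's and in fact establishes a stronger conclusion.

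The paper argues as follows: from the $\E(S,\cdot)$-fixed hypothesis and \eqref{assoc} one has $\theta_x\theta_e=\theta_e$ for every $x\in S$ and $e\in\E(S,\cdot)$ (your fact (b)). Fixing a single idempotent $e$ and expanding $\theta_y$ via \eqref{rel} as $\theta_e\theta_y\theta_{e\cdot y}$, one gets
\[
\theta_x\theta_y=\theta_x\theta_e\theta_y\theta_{e\cdot y}=\theta_e\theta_y\theta_{e\cdot y}=\theta_y,
\]
so $(T(S),\circ)$ is a right-zero semigroup, and \cref{prop:lzs} finishes. Note that this uses nothing about Clifford semigroups beyond the existence of a single idempotent.

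Your argument instead exploits the Clifford structure genuinely: via \cref{lm:cubic}-4 and the identity $e_y\cdot y\cdot e_y=y$ you obtain $\theta_y=\theta_{e_y}$, and your Step~1 (which uses commutativity of idempotents) collapses all $\theta_e$ to a single map $\sigma$. This buys you the stronger conclusion that $(S,\ast)$ is determined by $\sigma$, hence the APA is simultaneously LTD and RTD by \cref{prop:both}. The paper's proof is leaner and shows that the $\E(S,\cdot)$-fixed hypothesis alone (plus one idempotent) already forces RTD; your proof trades that economy for a sharper structural statement.
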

\begin{proof}
 By the definition, for all $e \in \E(S, \cdot)$ and $x \in S$, 
we have$\theta_e=
\theta_x\theta_e$. Thus,  for $x, y \in S$, we get
$\theta_x\theta_y\underset{\eqref{rel}}{=}  \theta_x\theta_e\theta_y\theta_{e \cdot y}=\theta_e\theta_y\theta_{e \cdot y}=\theta_y$. 
Therefore, by \cref{prop:lzs}, the claim follows.
\end{proof}

\smallskip

\section{Questions and remarks}
In this section, we focus on some open problems and suggest some other APA classes that could be investigated.

\medskip



Let us start by proving the following result which shows that, in the case $(S,  \cdot)$ is a semilattice, then the related APA is both left and right-trivially distributive. We do this since the easy computation will reveal a class of APAs that could be studied.
\begin{prop}\label{semilattice}
Let $(S,\cdot,\ast)$ be an APA such that $(S,\cdot)$ is a semilattice. Then $(S,\ast)$ is determined by some $\gamma\in \End(S,\cdot)$. 
\begin{proof}
By \cref{lm:cubic}-4.,  we have $\theta_x=\theta_{x \cdot y \cdot x}=\theta_{x \cdot y }=\theta_{y \cdot x}=\theta_{y \cdot x \cdot y}=\theta_y$, for all $x \in S$. Thus, the solution belongs to the class of solutions in \cref{ex:gamma}.
\end{proof}
\end{prop}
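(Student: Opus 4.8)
The plan is to use the idempotency and commutativity of a semilattice to collapse all the left translations $\theta_x$ to a single map, and then invoke \cref{ex:gamma}. Since $(S,\cdot)$ is a semilattice, every element is idempotent, so $\E(S,\cdot)=S$; this is exactly the hypothesis under which \cref{lm:cubic}-4. becomes available for \emph{every} element of $S$.

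First I would fix $x,y\in S$ and apply \cref{lm:cubic}-4. with $e:=x$, obtaining $\theta_x=\theta_{x\cdot y\cdot x}$. Using the commutative and idempotent laws of $(S,\cdot)$ one has $x\cdot y\cdot x=x\cdot x\cdot y=x\cdot y$, hence $\theta_x=\theta_{x\cdot y}$. The symmetric argument, applying \cref{lm:cubic}-4. with $e:=y$ and using $y\cdot x=x\cdot y$, gives $\theta_y=\theta_{y\cdot x}=\theta_{x\cdot y}$. Comparing the two equalities yields $\theta_x=\theta_y$ for all $x,y\in S$; therefore there is a single map $\gamma\in S^S$ with $\theta_x=\gamma$ for every $x\in S$, that is, $(S,\ast)$ is determined by $\gamma$.

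It then remains only to identify $\gamma$. Since $(S,\cdot,\ast)$ is an APA whose operation $\ast$ is determined by $\gamma$, \cref{ex:gamma} immediately gives $\gamma\in\End(S,\cdot)$ and $\gamma^2=\gamma$, which is the assertion of the proposition. I do not expect a genuine obstacle here: the only point requiring a little care is the reduction $x\cdot y\cdot x=x\cdot y$, where both semilattice axioms are used, together with the observation that $\E(S,\cdot)=S$ is what makes \cref{lm:cubic}-4. applicable to arbitrary elements of $S$.
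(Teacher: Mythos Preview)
Your proof is correct and follows essentially the same approach as the paper: apply \cref{lm:cubic}-4.\ (valid for all elements since $\E(S,\cdot)=S$) together with the semilattice identities to obtain the chain $\theta_x=\theta_{x\cdot y\cdot x}=\theta_{x\cdot y}=\theta_{y\cdot x}=\theta_{y\cdot x\cdot y}=\theta_y$, and then invoke \cref{ex:gamma}. Your write-up merely spells out the semilattice reductions and the appeal to \cref{ex:gamma} more explicitly than the paper does.
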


\smallskip

The proof of \cref{semilattice} suggests to consider
the  class of APAs $(S,\cdot, \ast)$ such that $\theta_x=\theta_{x\cdot y\cdot x}$, for all $x, y \in S$.  By \cref{lm:cubic}-4., obvious examples of such classes are given by APAs $(S,\cdot,\ast)$ with $(S,\cdot)$ being a band. But even such APAs do not have to be neither left nor  right-trivially distributive, as we will see at the end of the section. Therefore the question when APAs satisfying the condition are left or right-trivially distributive naturally arises. At now we have two observations.

\begin{prop}\label{left_triv}
    Let $(S,\cdot, \ast)$ be an APA such that $\theta_x=\theta_{x\cdot y\cdot x}$, for all $x, y \in S$. Then $(S,\cdot, \ast)$ is left-trivially distributive if and only if $\theta_x^2=\theta_y^2$, for all $x, y \in S$.
    \begin{proof}
        First, assume that $(S,\cdot, \ast)$ is left-trivially distributive. Then $\theta_{x \cdot y \cdot x}=\theta_x=\theta_{x \cdot y}$, for all $x, y \in S$. Moreover,
            $\theta_x^2\underset{\eqref{rel}}{=}\theta_y\theta_x\theta_{y \cdot x}\theta_x=\theta_y\theta_x\theta_{y \cdot x}\theta_{x \cdot y \cdot x}\underset{\eqref{rel}}{=}\theta_y\theta_{y \cdot x}=\theta_y^2$.
        
        Vice versa, suppose that $\theta_x^2=\theta_y^2$, for all $x, y \in S$. Thus, by \eqref{rel_4}, we obtain
        \begin{align*}
            \theta_{x \cdot y}\theta_y=\theta_y^2\theta_x\theta_{y \cdot y}=\theta_x^2\theta_x\theta_{y \cdot y}=\theta_x\theta_y^2\theta_{y \cdot y}\underset{\eqref{rel}}{=}\theta_x\theta_y.
        \end{align*}
  Hence, 
            \begin{align}\label{eq_xxyy}
                \theta_{x \cdot y}\underset{\eqref{rel}}{=}\theta_y\theta_{x \cdot y}\theta_{y \cdot x \cdot y  }=\theta_y\theta_{x \cdot y}\theta_{y }=\theta_y\theta_x\theta_y.
            \end{align}      
            The last equality implies that $\theta_{x \cdot y}=\theta_y(\theta_y\theta_x\theta_y)\theta_y=\theta_x^5.$ On the other hand,
          \begin{align*}
            \theta_x\underset{\eqref{rel}}{=}\theta_x^2\theta_{x \cdot x}\underset{\eqref{eq_xxyy}}{=}\theta_x^2\theta_x\theta_x\theta_x=\theta_x^5,
          \end{align*} 
          which concludes the proof.
    \end{proof}
\end{prop}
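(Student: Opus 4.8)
The plan is to establish the two implications separately, in each case working entirely with the left-translation maps $\theta_x$ and exploiting the universal APA identities \eqref{rel} and \eqref{rel_4} together with the standing hypothesis $\theta_x=\theta_{x\cdot y\cdot x}$.

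For the forward direction, suppose $(S,\cdot,\ast)$ is left-trivially distributive, so that $\theta_{x\cdot y}=\theta_x$ for all $x,y\in S$ (and the hypothesis $\theta_x=\theta_{x\cdot y\cdot x}$ is then automatic). To compute $\theta_x^2$ I would first apply \eqref{rel} to write $\theta_x^2=\theta_y\theta_x\theta_{y\cdot x}\theta_x$, then replace the trailing $\theta_x$ by $\theta_{x\cdot y\cdot x}$ using the hypothesis, and recognise $\theta_x\theta_{y\cdot x}\theta_{x\cdot y\cdot x}$ as an instance of \eqref{rel} (with $y\cdot x$ and $x$ in the roles of the two arguments), so that this factor collapses to $\theta_{y\cdot x}$. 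This leaves $\theta_x^2=\theta_y\theta_{y\cdot x}$, and since $\theta_{y\cdot x}=\theta_y$ by \eqref{LTD} we conclude $\theta_x^2=\theta_y^2$ for all $x,y$.

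For the converse, assume $\theta_x^2=\theta_y^2$ for all $x,y$, and aim to prove $\theta_{x\cdot y}=\theta_x$. I would start from \eqref{rel_4} with $z=y$, giving $\theta_{x\cdot y}\theta_y=\theta_y^2\theta_x\theta_{y\cdot y}$; replacing $\theta_y^2$ by $\theta_x^2$, rearranging the resulting power of $\theta_x$, and using \eqref{rel} in the form $\theta_y=\theta_y^2\theta_{y\cdot y}$, this simplifies to $\theta_{x\cdot y}\theta_y=\theta_x\theta_y$. Next, \eqref{rel} applied to $x\cdot y$ gives $\theta_{x\cdot y}=\theta_y\theta_{x\cdot y}\theta_{y\cdot x\cdot y}$, and the hypothesis $\theta_y=\theta_{y\cdot x\cdot y}$ turns this into $\theta_{x\cdot y}=\theta_y\theta_{x\cdot y}\theta_y$; substituting $\theta_{x\cdot y}\theta_y=\theta_x\theta_y$ then yields the auxiliary identity $\theta_{x\cdot y}=\theta_y\theta_x\theta_y$.

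To finish, I would sandwich this identity inside the previous one: from $\theta_{x\cdot y}=\theta_y\theta_{x\cdot y}\theta_y$ and $\theta_{x\cdot y}=\theta_y\theta_x\theta_y$ one obtains $\theta_{x\cdot y}=\theta_y^2\theta_x\theta_y^2$, which by the equal-squares hypothesis equals $\theta_x^2\theta_x\theta_x^2=\theta_x^5$. On the other hand, specialising the auxiliary identity to $y=x$ gives $\theta_{x\cdot x}=\theta_x^3$, so \eqref{rel} yields $\theta_x=\theta_x^2\theta_{x\cdot x}=\theta_x^5$. Comparing the two computations gives $\theta_{x\cdot y}=\theta_x$, that is, $(S,\cdot,\ast)$ is left-trivially distributive. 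The step I expect to be the main obstacle is this converse: one must hit on exactly the right substitutions into \eqref{rel} and \eqref{rel_4} — in particular the auxiliary identity $\theta_{x\cdot y}=\theta_y\theta_x\theta_y$, which is the linchpin and is not obvious a priori — so that all the compositions can be funnelled into a single power $\theta_x^5$ of one map.
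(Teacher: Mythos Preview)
Your proposal is correct and follows the paper's proof essentially line by line: both directions use the same substitutions into \eqref{rel} and \eqref{rel_4}, and in the converse you isolate the same auxiliary identity $\theta_{x\cdot y}=\theta_y\theta_x\theta_y$ (the paper's \eqref{eq_xxyy}) and finish by reducing both $\theta_{x\cdot y}$ and $\theta_x$ to $\theta_x^5$.
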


\smallskip

Consequently, \cref{left_triv} 
gives the following:
\begin{cor}\label{band}
    Let $(S, \cdot, \ast)$ be an APA such that $(S, \cdot)$ is a band.  Then $(S, \cdot, \ast)$ is left-trivially distributive if and only if $\theta_x^2=\theta_y^2$, for all $x, y \in S$.
\end{cor}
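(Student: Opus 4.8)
The plan is to obtain Corollary \ref{band} as a direct specialization of Proposition \ref{left_triv}. The proposition assumes that the APA satisfies $\theta_x = \theta_{x\cdot y\cdot x}$ for all $x,y\in S$, and concludes that left-trivial distributivity is equivalent to $\theta_x^2 = \theta_y^2$ for all $x,y\in S$ — which is precisely the equivalence we want. So the only work left is to verify that a band automatically fulfills the hypothesis of Proposition \ref{left_triv}.

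First I would record that if $(S,\cdot)$ is a band then $\E(S,\cdot) = S$, i.e.\ every element is idempotent. Then I would apply Lemma \ref{lm:cubic}(4), which states $\theta_e = \theta_{e\cdot x\cdot e}$ for all $e\in\E(S,\cdot)$ and $x\in S$. Since here any element of $S$ plays the role of $e$, this gives $\theta_x = \theta_{x\cdot y\cdot x}$ for all $x,y\in S$ with no further assumption. At this point Proposition \ref{left_triv} applies verbatim and delivers the stated equivalence, finishing the argument.

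I do not expect a genuine obstacle: the mathematical content is entirely carried by Lemma \ref{lm:cubic}(4) and Proposition \ref{left_triv}. The only point deserving a line of care in the write-up is the substitution in Lemma \ref{lm:cubic}(4) — the lemma is phrased for a distinguished idempotent $e$, and one should note explicitly that on a band this may be taken to be an arbitrary element, so that the identity $\theta_x = \theta_{x\cdot y\cdot x}$ holds unrestrictedly. With that remark in place the corollary is immediate.
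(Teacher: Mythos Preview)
Your proposal is correct and matches the paper's approach exactly: the paper states the corollary as an immediate consequence of Proposition~\ref{left_triv}, having already observed (via Lemma~\ref{lm:cubic}(4)) that APAs with $(S,\cdot)$ a band satisfy $\theta_x=\theta_{x\cdot y\cdot x}$.
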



\begin{ex} 
Let $n \geq 1$, $S=\{0,1,\ldots,n-1,n\}$, and define for $x,y\in S$ two binary operations:
\begin{align*}
x\cdot y=\begin{cases}x &{\rm if}\quad x\neq n\; \,{\rm or}\,\; x=y=n\\
0 &{\rm if}\quad x=n\; \,{\rm and}\; \,y\neq n
\end{cases}
\quad \text{and} \quad 
x\ast y=\begin{cases}0 &{\rm if}\quad x,y\neq 1\,\, {\rm or}\, \,x=y=1\\
1  &{\rm if}\quad x=1,\; y\neq 1\; \,{\rm or}\,\; x\neq 1,\; y= 1
\end{cases}.
\end{align*}
 Clearly, $(S,\cdot)$ is a band and $(S,\cdot,\ast)$ is an APA such that 
the operation $\ast$ is commutative, and $(S\setminus\{n\},\cdot)$ is a left-zero semigroup.  Moreover, in the semigroup $(S,\cdot)$, the element $0$ is a left, but not right, unique annihilator. Hence, by \cref{thm:leftanih},  
$(S,\cdot,\ast)$  is left-trivially distributive. Note also that for all $k\neq 1$, $\theta_k=\theta_0$ and $\theta_0\theta_0=\theta_0=\theta_1\theta_1$.

\end{ex}

\begin{prop}\label{RTDxyx}
    Let $(S,\cdot, \ast)$ be an APA such that $\theta_x=\theta_{x\cdot y\cdot x}$, for all $x, y \in S$. Then $(S,\cdot, \ast)$ is right-trivially distributive if and only if $\theta_{x \cdot x}=\theta_{y \cdot x}$, for all $x, y \in S$.
\end{prop}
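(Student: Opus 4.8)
The plan is to prove both implications directly from the two identities in play, using nothing beyond the associativity of $\cdot$ (which holds since $(S,\cdot)$ is a semigroup); in particular I do not expect to need \eqref{rel} or \eqref{rel_4}. Throughout I will use the $\theta$-form of the two distributivity laws: \eqref{RTD} says $\theta_{x\cdot y}=\theta_y$ for all $x,y\in S$, exactly as \eqref{LTD} amounts to $\theta_{x\cdot y}=\theta_x$.

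For the ``only if'' direction I would simply assume $(S,\cdot,\ast)$ is right-trivially distributive and read off the claim: applying $\theta_{u\cdot v}=\theta_v$ once with $u=v=x$ and once with $u=y,\,v=x$ gives $\theta_{x\cdot x}=\theta_x=\theta_{y\cdot x}$ for all $x,y\in S$, which is the asserted identity. This uses only \eqref{RTD} and nothing else.

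For the ``if'' direction I would assume $\theta_{x\cdot x}=\theta_{y\cdot x}$ for all $x,y\in S$ and aim to reach $\theta_{y\cdot x}=\theta_x$, which is \eqref{RTD} after swapping the names of the first two variables. The single move that makes this work is to substitute $y:=x\cdot x$ into the assumption: by associativity $(x\cdot x)\cdot x=x\cdot x\cdot x$, so this yields $\theta_{x\cdot x\cdot x}=\theta_{x\cdot x}$. On the other hand, the standing hypothesis $\theta_x=\theta_{x\cdot y\cdot x}$ evaluated at $y:=x$ gives $\theta_x=\theta_{x\cdot x\cdot x}$. Chaining the two equalities produces $\theta_{x\cdot x}=\theta_x$ for every $x\in S$, and then the assumption itself becomes $\theta_{y\cdot x}=\theta_{x\cdot x}=\theta_x$, i.e. $(x\cdot y)\ast z=y\ast z$ for all $x,y,z\in S$.

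I do not anticipate a genuine obstacle: the only point requiring a little thought is recognising that the word $x\cdot x\cdot x$ serves as a common ``bridge'' — it equals $\theta_{x\cdot x}$ via the new hypothesis and $\theta_x$ via the standing one — which forces $\theta_{x\cdot x}=\theta_x$ and collapses the new hypothesis onto \eqref{RTD}. I would finish with a one-line remark noting the contrast with \cref{left_triv}, whose proof genuinely has to iterate \eqref{rel} and \eqref{rel_4}, and pointing out that this is the right-sided analogue of the band phenomenon in \cref{band}: when $(S,\cdot)$ is a band one has $x\cdot x=x$, so the hypothesis $\theta_{x\cdot x}=\theta_{y\cdot x}$ is verbatim \eqref{RTD}.
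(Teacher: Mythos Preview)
Your argument is correct and in fact considerably simpler than the paper's. The paper proves the ``if'' direction by a chain of six applications of \eqref{rel}: starting from $\theta_{y\cdot x}=\theta_x\theta_{y\cdot x}\theta_{x\cdot y\cdot x}$, replacing $\theta_{x\cdot y\cdot x}$ by $\theta_x$, and then repeatedly expanding and collapsing via \eqref{rel} and the two hypotheses until the expression reduces to $\theta_x\theta_x\theta_{x\cdot x}=\theta_x$. Your route bypasses all of this: the single substitution $y:=x$ in the standing hypothesis gives $\theta_x=\theta_{x\cdot x\cdot x}$, while $y:=x\cdot x$ in the new hypothesis gives $\theta_{x\cdot x}=\theta_{x\cdot x\cdot x}$, and chaining forces $\theta_{x\cdot x}=\theta_x$, whence the new hypothesis collapses to \eqref{RTD}.

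What your approach buys is not just brevity but a sharper statement: you use neither \eqref{p_one}, \eqref{p_two}, nor \eqref{assoc}, only the associativity of $\cdot$. So the equivalence in \cref{RTDxyx} holds for \emph{any} algebra $(S,\cdot,\ast)$ with $(S,\cdot)$ a semigroup satisfying $\theta_x=\theta_{x\cdot y\cdot x}$, not just for APAs. The paper's proof, by leaning on \eqref{rel}, obscures this. Your closing remark about \cref{left_triv} is apt: there the analogous reduction genuinely requires the APA identities, which explains the asymmetry between the two propositions.
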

\begin{proof}
    Clearly, if $(S,\cdot, \ast)$ is a right-trivially distributive APA, then, for all $x, y \in S$, $\theta_{x \cdot x}=\theta_x=\theta_{y \cdot x}$. Conversely,
    suppose that $\theta_{x \cdot x}=\theta_{y\cdot x}$, for all $x, y \in S$. 
    Then, 
    applying \eqref{rel} several times, we get
    \begin{align*}
        \theta_{y \cdot x}&=\theta_x\theta_{y \cdot x}\theta_{x \cdot y \cdot x}=\theta_x\theta_{y \cdot x}\theta_x=\theta_x\theta_{y \cdot x}(\theta_y\theta_x\theta_{y \cdot x}){=}\theta_x\theta_{y \cdot x}\theta_y(\theta_y\theta_x\theta_{y \cdot x})\theta_{y \cdot x}\\
        &=\theta_x(\theta_{y \cdot x}\theta_y\theta_{y \cdot x \cdot y})\theta_x\theta_{y \cdot x}\theta_{y \cdot x}{=}\theta_x\theta_y\theta_x\theta_{y \cdot x}\theta_{y \cdot x}=\theta_x\theta_x\theta_{x \cdot x}=\theta_x,
    \end{align*}
    and so $(S,\cdot, \ast)$ is RTD.
\end{proof}
The following example shows that not all APAs with band reduct are left or right trivially distributive.
 \begin{ex}
Let $(S,\ast)$ be a semigroup generated by a set $\{a,b\}$ and such that $a^4=a^2$, $b^2=b$,  $a^2\ast b=b$, $b\ast a^2=a^2$ and $a^2\ast x\ast y=b\ast x\ast y=x\ast y$
for any $x,y\in S$. Then $S=\{a,a^2,a^3,b,a\ast b,b\ast a,a\ast b\ast a\}$. Note that $(S,\ast)$ is not right-normal: $a\ast b\ast a\neq b\ast  a^2=a^2$ and  $\theta_a^2(a)=\theta_a(a^2)=a^3\neq b\ast a=\theta_{b}(b\ast a)=\theta_{b}^2(a)$.  

Now let define two binary operations on the set $S\times S$ in the following way: for $(x_1,x_2), (y_1,y_2)\in S\times S$, 
$(x_1,x_2)\cdot(y_1,y_2):=(x_1,y_2)$ and
\begin{align*}
   &(x_1,x_2)\,\hat{\ast}\,(y_1,y_2):=\begin{cases}
       \left(x_1\ast x_2^2\ast a,x_1\ast a^3\right) &\text{if $y_1=y_2=a$} \\
        \left(x_1\ast  y_1,x_1\ast y_1^2 \ast a\right) &\text{if $y_1\neq a, \; y_2=a$}\\
        \left(x_1\ast x_2^2\ast a,x_1\ast y_2 \right) &\text{if $y_1= a, \; y_2\neq a$}\\
        \left(x_1\ast y_1,x_1\ast y_2 \right) &\text{if $y_1\neq a, \; y_2\neq a$}
   \end{cases}
\end{align*} 

By direct calculations, we obtain that $(S\times S,\cdot,\hat{\ast})$ is an APA with $(S\times S,\cdot)$ being a rectangular band. 
Further, 
$\hat{\theta}_{(a,a)\cdot (b,b)}((a,a))=
(a\ast b\ast a,a^2)\neq 
(a^2,a^2)=
\hat{\theta}_{(a,a)}\left((a,a)\right)$ and $\hat{\theta}_{(a,a)\cdot (b,b)}((a,a))\neq \hat{\theta}_{(b,b)}((a,a))=(b\ast a,b\ast a^3).$
Then 
$\left(S\times S,\cdot,\hat{\ast}\right)$  is neither LTD nor RTD.

\end{ex}

\medskip



\smallskip
The discussion above suggests a potential problem and question.
\begin{prob}
Characterize APAs $(S, \cdot, \ast)$ such that $(x \cdot y \cdot x) \ast z=x \ast z$, for all $x, y, z\in S$, in particular, such that $(S,\cdot)$ is a band.
\end{prob}
\begin{que*}
 Do they exist other natural classes of APAs?  
\end{que*}

\bigskip

\section*{Acknowledgements}
\small{\noindent This work was partially supported by University of Salento - Department of Mathematics and Physics ``E. De Giorgi”, and Warsaw University of Technology - Faculty of Mathematics and Information Science. }\\
M. Mazzotta is a member of GNSAGA (INdAM) and of the nonprofit association ``AGTA-Advances in Group Theory and Applications". M. Mazzotta is supported by AGRI@INTESA- ``NATIONAL
CENTRE FOR HPC, BIG DATA AND QUANTUM COMPUTING", CUP: F83C22000740001.

\section*{Competing interest} \small{\noindent \b{The authors do not have competing interests to declare.
}}

\medskip

\bibliographystyle{elsart-num-sort}  
\bibliography{bibliography}

 
 \end{document}